\def\qed{\hfill\ifhmode\unskip\nobreak\fi\quad\ifmmode\Box\else\hfill$\Box$\fi}
\def\ite#1{\hfill\break${}$\hbox to 50pt {\quad(#1)\hfill}}
\newtheorem{thm}{Theorem}[section]
\newtheorem{cor}[thm]{Corollary}
\newtheorem{rem}[thm]{Remark}
\newtheorem{lem}[thm]{Lemma}
\newtheorem{conj}[thm]{Conjecture}
\newtheorem{definition}[thm]{Definition}
\newtheorem{claim}[thm]{Claim}
\newtheorem{proper}{Property}
\begin{document}

\title{\vspace{-0.5in}
  On $r$-uniform hypergraphs with circumference less than $r$ }

\author{
{{Alexandr Kostochka}}\thanks{
\footnotesize {University of Illinois at Urbana--Champaign, Urbana, IL 61801
 and Sobolev Institute of Mathematics, Novosibirsk 630090, Russia. E-mail: \texttt {kostochk@math.uiuc.edu}.
 Research of this author
is supported in part by NSF grant DMS-1600592
and grants 18-01-00353A and 16-01-00499  of the Russian Foundation for Basic Research.
}}
\and{{Ruth Luo}}\thanks{University of Illinois at Urbana--Champaign, Urbana, IL 61801, USA. E-mail: {\tt ruthluo2@illinois.edu}.
Research of this author
is supported in part by Award RB17164 of the Research Board of the University of Illinois at Urbana-Champaign.
}}

\date{\today}

\maketitle

\vspace{-0.3in}

\begin{abstract}

We show that for each $k\geq 4$ and $n>r\geq k+1$, every $n$-vertex $r$-uniform hypergraph with
no Berge cycle of length at least $k$ has at most $\frac{(k-1)(n-1)}{r}$ edges. The bound is exact, and we 
describe the extremal hypergraphs. This implies and slightly refines the theorem of Gy\H ori, Katona and Lemons
that for $n>r\geq k\geq 3$, every $n$-vertex $r$-uniform hypergraph with
no Berge path of length  $k$ has at most $\frac{(k-1)n}{r+1}$ edges. To obtain the bounds, we study
bipartite graphs with no cycles of length at least $2k$, and then translate the results into the language
of multi-hypergraphs.

\medskip\noindent
{\bf{Mathematics Subject Classification:}} 05C35, 05C38.\\
{\bf{Keywords:}} Tur\' an problem, extremal hypergraph theory, cycles and paths.
\end{abstract}

\section{Introduction}

%A cornerstone of extremal combinatorics is the study of Tur\'{a}n-type problems for graphs. 
The length $\ell(G)$ of a longest  path in  $G$ and the {\em  circumference} $c(G)$ (i.e. the length  of a longest cycle  in $G$)  are
fundamental parameters of a graph $G$. Erd\H{o}s and Gallai in 1959 proved the following results on these parameters.

\begin{thm}[Erd\H{o}s and Gallai~\cite{ErdGal59}]\label{ErdGallaiPath}  Let $n\geq k \geq 2$.
Let $G$ be an $n$-vertex graph with more than $\frac{1}{2}(k-2)n$ edges.
Then $G$ contains a $k$-vertex path $P_k$.
\end{thm}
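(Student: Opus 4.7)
The plan is to prove the contrapositive by induction on $n$: if the $n$-vertex graph $G$ contains no $P_k$, then $|E(G)| \leq \tfrac{(k-2)n}{2}$. The base case $n \leq k-1$ is immediate, since no $P_k$ exists and $|E(G)| \leq \binom{n}{2} \leq \tfrac{(k-2)n}{2}$.

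For the inductive step with $n \geq k$, I would let $v$ be a vertex of minimum degree and split into two cases. If $\deg(v) \leq \tfrac{k-2}{2}$, then $G - v$ also has no $P_k$, so by induction $|E(G-v)| \leq \tfrac{(k-2)(n-1)}{2}$, and adding $\deg(v)$ back yields the bound. Otherwise $\delta(G) \geq \lceil \tfrac{k-1}{2} \rceil$, and I would consider each connected component $C$ separately. If $|C| \geq k$, I would invoke the classical fact that a connected graph with minimum degree $d$ contains a path on at least $\min(2d+1, |C|)$ vertices; since $2\lceil \tfrac{k-1}{2} \rceil + 1 \geq k$, this yields a $P_k$ in $G$, contradicting the hypothesis. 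Hence every component has $|C| \leq k-1$, so $|E(C)| \leq \binom{|C|}{2} \leq \tfrac{(k-2)|C|}{2}$, and summing over components gives $|E(G)| \leq \tfrac{(k-2)n}{2}$.

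The main obstacle is the ``long path from minimum degree'' lemma used above. I would prove it via the usual longest-path argument: given a longest path $v_0 v_1 \cdots v_\ell$ in a connected graph with $\delta \geq d$, maximality forces every neighbor of $v_0$ and every neighbor of $v_\ell$ to lie on the path, so the index sets $A = \{i : v_0 v_i \in E\}$ and $B = \{j+1 : v_\ell v_j \in E\}$ both sit in $\{1, \dots, \ell\}$. When $\ell < 2d$ the inequality $|A| + |B| \geq 2d > \ell$ forces $A \cap B \neq \emptyset$, which produces a cycle through all $\ell + 1$ vertices of the path; connectivity together with maximality of the path then forces the whole component to equal this cycle, yielding a Hamilton path on $|C| \leq 2d$ vertices. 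Apart from this pigeonhole-plus-cycle-extension step, the argument reduces to the arithmetic inequality $\binom{m}{2} \leq \tfrac{(k-2)m}{2}$ for $m \leq k-1$, which is routine.
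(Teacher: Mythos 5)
Theorem~\ref{ErdGallaiPath} is quoted from Erd\H{o}s and Gallai~\cite{ErdGal59}; the paper under review does not supply a proof of it, so there is no internal argument to compare against. Your proof is correct and self-contained, and it follows one of the standard routes. The two cases of the induction are clean: deleting a vertex of degree at most $\tfrac{k-2}{2}$ and applying the induction hypothesis to $G-v$ gives the bound directly, while the complementary case yields $\delta(G)\geq\lceil\tfrac{k-1}{2}\rceil$ (the ceiling is exactly right, since degrees are integers), and then a component either has at least $k$ vertices and hence a $P_k$ by the long-path lemma, or has at most $k-1$ vertices and the edge bound is immediate from $\binom{m}{2}\leq\tfrac{(k-2)m}{2}$ for $m\leq k-1$.

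The key ingredient, ``a connected graph with minimum degree $d$ has a path on at least $\min(2d+1,|C|)$ vertices,'' is a classical fact, and your longest-path argument for it is the standard one and is correct: since every neighbor of each endpoint lies on the longest path $v_0\cdots v_\ell$, the sets $A=\{i:v_0v_i\in E\}$ and $B=\{j+1:v_\ell v_j\in E\}$ both lie in $\{1,\dots,\ell\}$, so if $\ell<2d$ they must intersect, producing a Hamiltonian cycle of $V(P)$, and any further vertex in the component would allow the path to be lengthened. This is essentially the same rotation/crossing device that the paper itself develops in the bipartite setting (Lemma~\ref{cycle}). One alternative that the paper's structure would suggest is to deduce Theorem~\ref{ErdGallaiPath} from the cycle version Theorem~\ref{ErdGallaiCyc} by adding a dominating vertex to each connected component; your proof avoids that dependence and is arguably more elementary, at the cost of needing the min-degree-long-path lemma. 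Either way, the argument you give is complete and correct.
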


\begin{thm}[Erd\H{o}s and Gallai~\cite{ErdGal59}]\label{ErdGallaiCyc}
Let $n\geq k \geq 3$. If $G$ is an $n$-vertex graph that does not contain a cycle of length at least $k$, then $e(G) \leq \frac{1}{2}(k-1)(n-1)$.  
%Let $G$ be an $n$-vertex graph with more than $\frac{1}{2}(k-1)(n-1)$ edges, $k \ge 3$.
%Then $G$ contains a cycle of length at least $k$.
\end{thm}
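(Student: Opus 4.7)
I plan to induct on $n$, with $k \geq 3$ fixed. In the base case $n \leq k-1$, every graph satisfies $e(G) \leq \binom{n}{2} = \frac{n(n-1)}{2} \leq \frac{(k-1)(n-1)}{2}$, so we may assume $n \geq k$.

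For the inductive step I split on whether $G$ is $2$-connected. If $G$ is disconnected or has a cut vertex, write $G = G_1 \cup G_2$ where $G_1, G_2$ are proper subgraphs with $|V(G_1) \cap V(G_2)| \leq 1$. Neither $G_i$ has a cycle of length at least $k$, and if $n_i = |V(G_i)|$, then $n_i < n$ and $n_1 + n_2 \leq n+1$. The inductive hypothesis yields
\[
e(G) \;=\; e(G_1) + e(G_2) \;\leq\; \frac{(k-1)(n_1-1)}{2} + \frac{(k-1)(n_2-1)}{2} \;\leq\; \frac{(k-1)(n-1)}{2}.
\]

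The core of the argument is the $2$-connected case, which I would handle by invoking Dirac's classical bound that every $2$-connected graph $G$ satisfies $c(G) \geq \min(|V(G)|,\, 2\delta(G))$. Since $n \geq k > c(G)$, this forces $2\delta(G) \leq c(G) \leq k-1$, hence $\delta(G) \leq \lfloor (k-1)/2 \rfloor \leq (k-1)/2$. Choosing a vertex $v$ of minimum degree and applying induction to $G-v$ (whose circumference is still less than $k$) gives
\[
e(G) \;=\; e(G-v) + d(v) \;\leq\; \frac{(k-1)(n-2)}{2} + \frac{k-1}{2} \;=\; \frac{(k-1)(n-1)}{2}.
\]

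The main obstacle is Dirac's lower bound $c(G) \geq \min(|V(G)|, 2\delta(G))$ for $2$-connected $G$. I would either cite it or interleave its proof into Case 2 via the standard longest-path argument: take a longest path $P = v_0 v_1 \ldots v_\ell$, observe that $v_0$ and $v_\ell$ have all their neighbors on $P$, and use a crossing-edge pigeonhole (setting $A = \{i : v_0 v_i \in E\}$ and $B = \{i : v_{i-1} v_\ell \in E\}$) to obtain a Hamilton cycle of $V(P)$ whose length can then be pushed to at least $2\delta$ by exploiting $2$-connectedness when $V(P) \neq V(G)$. Beyond importing this single ingredient, the induction is essentially bookkeeping, with equality saturated by edge-disjoint copies of $K_{k-1}$ all sharing a single common vertex.
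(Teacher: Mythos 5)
Your proof is correct, but there is nothing in the paper to compare it to: Theorem~\ref{ErdGallaiCyc} is a classical result of Erd\H{o}s and Gallai that the paper cites from~\cite{ErdGal59} without proof. Your argument is the standard one. The block decomposition reduces cleanly to the $2$-connected case because when $G_1, G_2$ overlap in at most one vertex the edge sets are disjoint and $n_1 + n_2 \le n+1$, so the $\frac{k-1}{2}(n_i - 1)$ bounds add up correctly. In the $2$-connected case, Dirac's bound $c(G) \ge \min(n, 2\delta)$ combined with $n \ge k > c(G)$ forces $2\delta < k$, hence $\delta \le \lfloor(k-1)/2\rfloor \le (k-1)/2$, and deleting a minimum-degree vertex closes the induction; the circumference of $G - v$ can only decrease, so the hypothesis persists. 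One small imprecision: the extremal graphs are not only $K_{k-1}$'s all sharing a single common vertex, but more generally any connected graph whose blocks are all copies of $K_{k-1}$ (a block-tree); the ``star'' arrangement you mention is just one instance. This does not affect the validity of the inequality itself. As an aside, it is interesting that the longest-path/crossing-neighbor machinery you would use to prove Dirac's lower bound inline is essentially a simplified version of the machinery the paper develops (Lemma~\ref{cycle} and the crossing-formation analysis) for its bipartite generalization.
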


The bounds in Theorems~\ref{ErdGallaiPath} and~\ref{ErdGallaiCyc} are  best possible for infinitely many $n$ and $k$. The theorems were refined 
in~\cite{FaudSche75, FKV, Kopy, Lewin, Woodall}.
Recently, Gy\H{o}ri, Katona, and Lemons~\cite{lemons} and Davoodi, Gy\H{o}ri, Methuku, and Tompkins~\cite{davoodi}  extended Theorem~\ref{ErdGallaiPath} 
to $r$-uniform hypergraphs ({\em $r$-graphs}, for short). They considered  {\em Berge} paths and cycles. 

A {\em Berge-path} of length $k$ in a multi-hypergraph $\mathcal H$ is a set of $k$ hyperedges $\{e_1, \ldots, e_k\}$ and a set of $k+1$ base vertices $\{v_1, \ldots, v_{k+1}\}$ such that for each $1\leq i\leq k$, $v_i, v_{i+1} \in e_i$.

A {\em Berge-cycle} of length $k$ in a multi-hypergraph $\mathcal H$ is a set of $k$ hyperedges $\{e_1, \ldots, e_k\}$ and a set of $k$ base vertices $\{v_1, \ldots, v_k\}$ such that for each $i$, $v_i, v_{i+1} \in e_i$ (with indices modulo $k$). 

It turns out that the bounds behave differently in the cases $k\leq r$ and $k>r$.

\begin{thm}[Gy\H{o}ri, Katona and Lemons~\cite{lemons}]\label{EGp} Let $r \geq k \geq 3$, and let $\mathcal H$ be an $n$-vertex $r$-graph  with no Berge-path of length $k$. Then $e(\mathcal H) \leq \frac{(k-1)n}{r+1}$. 
\end{thm}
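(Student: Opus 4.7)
I would follow the approach signaled in the abstract: translate the problem into a question about bipartite graphs with no long paths, and tackle that bipartite question directly.

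\textbf{Bipartite reduction.} Form the bipartite incidence graph $G$ of $\mathcal H$ with parts $A := V(\mathcal H)$ and $B := E(\mathcal H)$, joining $v\in A$ to $e\in B$ iff $v\in e$ in $\mathcal H$. Then $|A|=n$, $|B|=e(\mathcal H)$, and every vertex of $B$ has degree exactly $r$ in $G$. A Berge-path of length $k$ in $\mathcal H$ corresponds precisely to a path of length $2k$ in $G$ whose two endpoints both lie in $A$. The hypothesis on $\mathcal H$ therefore translates to: $G$ has no such path.

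\textbf{Bipartite claim.} The main task becomes: \emph{if $G$ is bipartite with parts $A, B$, every vertex of $B$ has degree exactly $r \geq k \geq 3$, and $G$ has no path of length $2k$ with both endpoints in $A$, then $|B| \leq \frac{(k-1)\,|A|}{r+1}$.} Theorem~\ref{EGp} follows at once. The extremal configurations to keep in mind are disjoint unions of what I will call $(r{+}1,k{-}1)$-\emph{blocks}: $r+1$ vertices of $A$ together with $k-1$ vertices of $B$, each $B$-vertex joined to $r$ of the $r+1$ $A$-vertices (equivalently, $k-1$ distinct $r$-subsets of an $(r{+}1)$-set used as hyperedges). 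Such a block carries no $A$-$A$ path of length $2k$ because that would need $k$ distinct $B$-vertices, which are not there.

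\textbf{Proving the bipartite claim.} I would induct on $|A|+|B|$. For the inductive step, pick a longest Berge-path $P = v_1 e_1 v_2 \cdots e_\ell v_{\ell+1}$ in the underlying multi-hypergraph, so $\ell \leq k-1$. Using P\'osa-style rotations at the two endpoints of $P$ together with the non-extendability of $P$, one should argue that after sufficiently many rotations the reachable endpoints are confined to a fixed set $S\subseteq V(\mathcal H)$ of size at most $r+1$, and that every hyperedge meeting $S$ is contained in $S$ — otherwise an outside vertex would yield a strictly longer Berge-path. Combined with $|e_i|=r\geq k\geq \ell+1$, this forces $r=k$, $\ell=k-1$, and pins the connected component of $P$ in $G$ down to a single $(r{+}1,k{-}1)$-block with at most $r+1$ $A$-vertices and at most $k-1$ $B$-vertices. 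Peeling it off decreases $|A|$ and $|B|$ in the ratio $(r{+}1):(k{-}1)$, matching the target density, and the remainder still satisfies the hypothesis, so induction closes the argument.

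\textbf{Main obstacle.} The delicate step is the rotation/non-extendability analysis verifying that the component of $P$ really forms a single block and that the multiplicity of hyperedges inside $V(P)$ is correctly bounded by $k-1$; this is tightest in the case $r=k$, where the end hyperedges must essentially coincide with $V(P)$ and parallel hyperedges must be tracked carefully. A secondary complication arises when the longest Berge-path in a component has length strictly less than $k-1$: then the component is not a full block and one must verify the density bound directly, likely by a short separate argument counting hyperedges inside a very small vertex set.
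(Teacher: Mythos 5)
The paper derives this theorem by a very different route. It first proves a Berge-\emph{cycle} theorem (Theorem~\ref{EG_full}) — which is where all the bipartite machinery lives — and then obtains the Berge-path bound as Corollary~\ref{gyori2} via a short reduction: adjoin a single new vertex $x$ to every hyperedge of $\mathcal H$, yielding $\mathcal H'$ on $n+1$ vertices with lower rank at least $r+1$; one checks that $\mathcal H'$ has no Berge-cycle of length $\ge k$, so the cycle theorem gives $e(\mathcal H)=e(\mathcal H')\le\frac{(k-1)((n+1)-1)}{r+1}=\frac{(k-1)n}{r+1}$. Your proposal instead attempts a direct induction on the path statement via P\'osa rotations, and as written it has genuine gaps.

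The central unjustified step is the claim that rotating a longest Berge-path confines the reachable endpoints to a set $S$ with $|S|\le r+1$ and that every hyperedge meeting $S$ lies inside $S$. You give no argument for $|S|\le r+1$, and I do not see how it follows from non-extendability: the rotation-reachable endpoints live inside $V(P)$ but can a priori be as numerous as $|V(P)|$, and rotation of Berge-paths is delicate because a hyperedge proposed for the rotation must not already occur on the path. The inference "this forces $r=k$, $\ell=k-1$" is also wrong as stated: the theorem must hold for every $r\ge k$, and a disjoint union of $(r+1,k-1)$-blocks with $r$ much larger than $k$ is a valid input, so no correct argument may conclude $r=k$. Finally, even granting that a component of $P$ sits inside an $(r+1)$-set, you have not bounded the number of hyperedges there by $k-1$: an $(r+1)$-set supports up to $r+1$ distinct $r$-subsets, and showing that $k$ of them already force a Berge-path of length $k$ needs an explicit SDR/Hall-type argument that you omit. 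Without this, the peeled component could have edge-to-vertex ratio as large as $\frac{r}{r+1}$, which exceeds the target $\frac{k-1}{r+1}$, and the induction breaks. The "secondary complication" you flag (components whose longest Berge-path is much shorter than $k-1$) is likewise a real case that the peeling argument must treat, not an afterthought.
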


\begin{thm}[Gy\H{o}ri, Katona and Lemons~\cite{lemons}]\label{lemons} Let $k > r+1 > 3$, and let $\mathcal H$ be an  $n$-vertex $r$-graph  with no Berge-path of length $k$. Then $e(\mathcal H) \leq \frac{n}{k}{k \choose r}$. 
\end{thm}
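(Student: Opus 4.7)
Proof Proposal.

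I would argue by strong induction on $n$. For the base case $n \le k$, we have $e(\mathcal H) \le \binom{n}{r} \le \frac{n}{k}\binom{k}{r}$, the second inequality holding because $\binom{n}{r}/n = \binom{n-1}{r-1}/r$ is nondecreasing in $n$.

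For the inductive step $n > k$, the plan is to locate a \emph{closed} set $S \subseteq V(\mathcal H)$ of size exactly $k$, meaning every hyperedge of $\mathcal H$ intersecting $S$ is entirely contained in $S$. Given such an $S$, we have $e(\mathcal H[S]) \le \binom{k}{r}$, while $\mathcal H - S$ is an $r$-graph on $n-k$ vertices still avoiding Berge paths of length $k$; by the inductive hypothesis $e(\mathcal H - S) \le \frac{n-k}{k}\binom{k}{r}$, and summation yields $e(\mathcal H) \le \frac{n}{k}\binom{k}{r}$.

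To locate $S$, I would start from a longest Berge path $P = v_0 e_1 v_1 \cdots e_m v_m$ in $\mathcal H$, so that $m \le k-1$ and $|V(P)| \le k$. By maximality of $P$, any hyperedge $f \notin \{e_1,\dots,e_m\}$ incident to $v_0$ and containing a vertex outside $V(P) \cup \bigcup_i e_i$ could be prepended to $P$, contradicting the absence of Berge paths of length $m+1 \le k$. A symmetric statement holds at $v_m$, and by standard rotation moves on $P$ the same conclusion propagates to every base vertex. Thus every hyperedge of the Berge-connected component containing $P$ lies within the ``explored region'' $U := V(P) \cup \bigcup_i e_i$.

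The main obstacle is that $|U|$ may exceed $k$: the naive bound $|U| \le (k-1)r + 1$ is too weak. The technical core of the proof is to refine the choice of $P$ so that $|U| \le k$, allowing $S$ to be taken as $U$ (padded arbitrarily if $|U|<k$). The regime $k > r+1$ is essential here: a longest Berge path has $m+1 \ge r+2$ base vertices, giving enough slack to absorb the $r$-vertex sets of the path's edges via exchange moves. An arguably cleaner alternative, more in the spirit of the abstract, is the bipartite-graph reformulation: pass to the incidence bipartite graph $B(\mathcal H)$, observe that ``no Berge path of length $k$ in $\mathcal H$'' translates to ``no length-$2k$ path in $B$ between two vertices of $V(\mathcal H)$,'' and invoke the paper's bipartite extremal lemma to bound $e(B) = r \cdot e(\mathcal H)$ directly, the desired bound becoming $e(B) \le \binom{k-1}{r-1} \cdot n$.
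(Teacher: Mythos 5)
First, a factual note: this paper does not prove Theorem~\ref{lemons}; it cites it from Gy\H{o}ri, Katona and Lemons~\cite{lemons} as background. There is therefore no ``paper proof'' to compare against, and your proposal must be judged on its own.

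The induction scheme itself is sound: once you have a nonempty set $U$ that is \emph{closed} (every hyperedge meeting $U$ is contained in $U$) and has $|U|\le k$, then $e(\mathcal H[U])\le\binom{|U|}{r}\le\frac{|U|}{k}\binom{k}{r}$ by the monotonicity of $\binom{n}{r}/n$ that you correctly observed, and induction on $\mathcal H-U$ gives the bound. (Incidentally, you do not need $|U|=k$ exactly; the ``padded arbitrarily'' remark is unnecessary and slightly misleading, since adjoining an arbitrary vertex to a closed set need not keep it closed.) The genuine gap is the existence of such a $U$. Your ``explored region'' $U=V(P)\cup\bigcup_i e_i$ from a longest Berge path has no size bound better than $(k-1)r+1$, which is far above $k$, and you explicitly defer ``the technical core'' of shrinking $|U|$ to at most $k$ without indicating how. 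That step \emph{is} the theorem; without it you have only a template. Moreover, the rotation argument is more delicate in the Berge setting than you acknowledge --- the hyperedge used to form a new endpoint may coincide with one of the $e_i$, and the set $U$ itself changes after rotation --- so even the propagation claim ``to every base vertex'' needs a careful statement and proof.

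Your proposed alternative route also fails. Every bipartite result in this paper (Theorems~\ref{EGbgr}, \ref{EGbgr2}, \ref{main}, \ref{t1}) is stated for $r\ge k+1$, whereas Theorem~\ref{lemons} lives in the opposite regime $k>r+1$, i.e.\ $r\le k-2$; indeed the coefficient $\tfrac{k}{2r-k+2}$ in Theorem~\ref{main} is not even positive once $k>2r+2$. Those results also bound $|X|$ (the number of high-degree vertices) under the circumference hypothesis $c(G)<2k$, not $e(G)$ under a path hypothesis, so neither the hypotheses nor the form of the conclusion line up. The paper is explicit in its introduction that the extremal behavior differs on the two sides of $k=r$; this paper treats $k\le r$, and the theorem you are asked about treats $k>r$. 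So the bipartite machinery here cannot be ``invoked'' for Theorem~\ref{lemons}.
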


Later, the remaining case $k = r+1$ was resolved by Davoodi, Gy\H{o}ri, Methuku, and Tompkins.

\begin{thm}[Davoodi, Gy\H{o}ri, Methuku and Tompkins~\cite{davoodi}] Let $k = r+1 > 2$, and let $\mathcal H$ be  an $n$-vertex
 $r$-graph  with no Berge-path of length $k$. Then $e(\mathcal H) \leq n$. 
\end{thm}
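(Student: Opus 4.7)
The plan is to prove the bound $e(\mathcal H) \le n$ by induction on $n$. The base case $n \le r+1$ is essentially vacuous: a Berge path of length $r+1$ requires $r+2$ distinct base vertices, so no such path can exist, and $e(\mathcal H) \le \binom{n}{r} \le n$ is immediate.

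For the inductive step with $n \ge r+2$, I first dispose of two easy reductions. If $\mathcal H$ is disconnected, split it into components $\mathcal H_i$ on $n_i < n$ vertices and apply the induction hypothesis to each to conclude $e(\mathcal H) = \sum_i e(\mathcal H_i) \le \sum_i n_i = n$. If some vertex $v$ has degree at most $1$, delete $v$ together with its incident edge (if any); the resulting hypergraph is on $n-1$ vertices and still has no Berge path of length $r+1$, so the induction hypothesis yields $e(\mathcal H) - 1 \le n - 1$. Henceforth I may assume $\mathcal H$ is connected with $\delta(\mathcal H) \ge 2$.

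In this remaining case, take a longest Berge path $P = v_0 e_1 v_1 \ldots e_\ell v_\ell$, necessarily with $\ell \le r$. Because $v_0$ has degree at least $2$ and any non-path edge through $v_0$ must be contained in $V(P)$ (else $P$ extends at $v_0$), a counting argument forces $\ell \in \{r-1, r\}$. When $\ell = r$, the set $V(P)$ has exactly $r+1$ vertices, so the edges of $\mathcal H$ contained in $V(P)$ number at most $\binom{r+1}{r} = r+1$. I would then run P\'osa-style rotations along $P$ to build up a set $\mathcal S \subseteq V(P)$ of vertices each realizable as an endpoint of a longest Berge path with the same edge-set; for every $v \in \mathcal S$, every non-path edge through $v$ lies inside $V(P)$. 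Combining this with the induction hypothesis applied to $\mathcal H$ restricted to $V(\mathcal H) \setminus V(P)$ (an $r$-graph on $n - r - 1$ vertices) should yield $e(\mathcal H) \le (r+1) + (n - r - 1) = n$ once the bridging edges have been carefully accounted for. The case $\ell = r-1$ is handled by the symmetric observation that any non-path edge through $v_0$ must equal $V(P)$ exactly, which is even more restrictive.

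The hard step is controlling the edges that cross between $V(P)$ and its complement: path edges may carry up to $r-2$ vertices outside $V(P)$, and any crossing non-path edge can meet $V(P)$ only at vertices outside $\mathcal S$. Ensuring $\mathcal S$ is large enough (ideally $\mathcal S = V(P)$) requires careful rotation bookkeeping. An alternative route, suggested by the bipartite-graph framework announced in the abstract, is to translate the problem into the incidence graph $B(\mathcal H)$ and exploit the fact that a Berge path of length $r+1$ in $\mathcal H$ corresponds to a path of length $2r+2$ between two $V$-vertices in $B(\mathcal H)$; a short extension argument (using simplicity) upgrades this to ``no path of length $2r+2$ at all in $B(\mathcal H)$'', after which the $r$-regularity of the $E$-side of $B(\mathcal H)$ should enable a direct Erd\H{o}s--Gallai-type count giving $|E| \le |V|$.
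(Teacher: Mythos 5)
The theorem you are asked to prove is stated in the paper only as a citation to Davoodi, Gy\H{o}ri, Methuku and Tompkins; the paper does not reprove it, so there is no in-paper argument to compare against. Your proposal therefore has to be assessed on its own terms, and as written it is a sketch with unresolved gaps rather than a proof.

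The concrete difficulties are the following. First, the assertion that ``a counting argument forces $\ell\in\{r-1,r\}$'' is not established. Your reasoning covers the case where $v_0$ lies in an edge $f\notin\{e_1,\dots,e_\ell\}$, in which case $f\subseteq V(P)$ is forced and hence $\ell\ge r-1$. But $\delta(\mathcal H)\ge 2$ only says $v_0$ lies in at least two edges of $\mathcal H$; it does not rule out that every edge containing $v_0$ is already one of the path edges $e_1,\dots,e_\ell$. In that situation your counting argument does not apply, and one must instead run a rotation argument (using $v_0\in e_i$ for some $i\ge 2$) to produce a new endpoint before any edge-containment conclusion can be drawn. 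Second, and more seriously, the bridging-edge accounting that you defer is exactly where the proof lives. The path edges $e_1,\dots,e_\ell$ are themselves crossing edges in general (each may have up to $r-2$ vertices outside $V(P)$), so the decomposition $e(\mathcal H)\le (\text{edges inside }V(P))+(\text{edges inside the complement})$ omits both the path edges and arbitrary non-path crossing edges. Restricting to $V(\mathcal H)\setminus V(P)$ and invoking the induction hypothesis gives you control only of the fully-interior edges; the crossing edges need a separate bound, and nothing in the sketch produces one. Third, the alternative route through the incidence graph rests on the claim that simplicity ``upgrades'' the absence of $V$-to-$V$ paths of length $2r+2$ to the absence of all paths of length $2r+2$. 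That upgrade is not automatic: a path in $G(\mathcal H)$ of length $2r+2$ starting and ending on the $E$-side uses $r+1$ base vertices and $r+2$ hyperedges, and if the terminal hyperedge is entirely contained in the set of base vertices already used, the path does not extend to a $V$-to-$V$ path. So the reduction to an unconditional ``no long path in the bipartite graph'' statement is false as stated, and the Erd\H{o}s--Gallai-type count you hope to apply has no hypothesis to bite on.

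In short, the reductions (disconnected case, minimum degree $\ge 2$) and the base case are fine, but the heart of the argument --- the rotation bookkeeping, the treatment of crossing edges, and the alternative incidence-graph route --- is either missing or contains an unjustified step. You have correctly identified where the difficulty lies, but the proposal as written does not close it.
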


Furthermore, the bounds in these three theorems are sharp for each $k$ and $r$ for infinitely many $n$.

Very recently, several interesting results were obtained for Berge paths and cycles for $k\geq r+1$.
First,  Gy\H{o}ri, Methuku, Salia, Tompkins, and Vizer~\cite{connp} proved an asymptotic version of the Erd\H{o}s--Gallai theorem for Berge-paths in {\em connected} hypergraphs whenever $r$ is fixed and $n$ and $k$ tend to infinity.  
For Berge-cycles, the exact result for $k\geq r+3$ was obtained in~\cite{FKL}:
\begin{thm}[F\"uredi, Kostochka and Luo~\cite{FKL}]\label{FKL}
Let $k \geq r+3 \geq 6$, and let $\mathcal H$ be  an $n$-vertex $r$-graph   with no Berge-cycles of length $k$ or longer. Then $e(\mathcal H) \leq \frac{n-1}{k-2}{k-1 \choose r}$. 
\end{thm}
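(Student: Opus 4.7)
The plan is to prove this via a block decomposition argument, with the 2-connected base case handled by Pósa-type rotations on Berge-paths. First I would show: if $\mathcal H$ is 2-connected (has no cut vertex) and contains no Berge-cycle of length $\geq k$, then $|V(\mathcal H)| \leq k-1$, which yields $e(\mathcal H) \leq \binom{k-1}{r}$ trivially. The argument takes a longest Berge-path $P$ of length $\ell$ and iteratively performs Pósa-type rotations to generate a large set of possible endpoints. Using 2-connectedness, some rotated endpoint must be joinable back to the original starting vertex via a hyperedge outside $P$, producing a Berge-cycle of length $\ell + 1$. Since Berge-cycles of length $\geq k$ are forbidden, $\ell \leq k - 2$, and a further invocation of 2-connectedness forces every vertex of $\mathcal H$ to lie on $P$.

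For general $\mathcal H$, decompose it into its 2-connected blocks $\mathcal H_1, \ldots, \mathcal H_t$, pairwise meeting in at most one cut vertex. Let $n_i := |V(\mathcal H_i)|$ and $m_i := |E(\mathcal H_i)|$. By the first step, $n_i \leq k - 1$ and $m_i \leq \binom{n_i}{r}$. A straightforward calculation verifies that $\binom{n}{r}/(n-1)$ is non-decreasing in $n$ on $n \geq r$, giving the per-block bound $m_i \leq \frac{n_i - 1}{k-2}\binom{k-1}{r}$. Summing over $i$ and using the standard block-cut identity $\sum_i (n_i - 1) = n - 1$ for connected $\mathcal H$ (the disconnected case only gives additional slack) yields $e(\mathcal H) \leq \frac{n-1}{k-2}\binom{k-1}{r}$.

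The main obstacle is executing the Pósa rotation in the hypergraph setting. Unlike for graphs, each hyperedge in a Berge-path has two designated base vertices out of its $r$ members, and the remaining $r - 2$ unused vertices can both enable alternate rotations and obstruct the natural ones; managing these additional degrees of freedom while preserving 2-connectedness invariants and injectivity of the base-vertex assignments is delicate. The hypothesis $k \geq r + 3$ provides just enough slack for a rotation argument to close. As an alternative route, in the spirit of the current paper's stated methodology, one could pass to the bipartite incidence graph $B(\mathcal H)$, observe that Berge-cycles of length $\ell$ in $\mathcal H$ correspond bijectively to ordinary cycles of length $2\ell$ in $B(\mathcal H)$, and then deduce the bound from a sufficiently sharp Erd\H{o}s--Gallai type theorem for bipartite graphs with no long cycles.
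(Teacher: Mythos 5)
This theorem is cited from the companion paper~\cite{FKL}; the present paper does not give its own proof, so I can only assess the proposal on its merits. There is a fatal gap: the key lemma you assert --- that a $2$-connected $r$-graph with no Berge-cycle of length $\geq k$ has at most $k-1$ vertices --- is false. Take, for instance, $r=3$, $k=6$, with vertex set $\{v_1,v_2,v_3,v_4,w_1,w_2\}$ and edges $e_1=\{v_1,v_2,w_1\}$, $e_2=\{v_2,v_3,w_1\}$, $e_3=\{v_3,v_4,w_2\}$, $e_4=\{v_1,v_4,w_2\}$. Every vertex has degree $2$ in the incidence graph, which one checks directly is $2$-connected. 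Since there are only $4$ hyperedges, every Berge-cycle has length at most $4<k$, yet $n=6>k-1=5$. This scales: using edges of the form $e_i=\{v_i,v_{i+1}\}\cup W_i$ with the sets $W_i$ chosen to overlap their neighbors (so that every auxiliary vertex has degree $\geq 2$), one can build $2$-connected $r$-graphs with circumference less than $k$ and $n$ roughly $(r/2)(k-1)$, blowing the claimed bound wide open. In fact the paper's own Remark~2.11 already exhibits $2$-connected (multi-)hypergraphs of arbitrarily large order with circumference exactly $k-1$.

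The rotation argument you describe also fails at the step where ``$2$-connectedness forces every vertex to lie on $P$.'' This is false already for graphs: $K_{2,m}$ is $2$-connected, has circumference $4$, and for $m\geq 4$ no Hamiltonian path. The same phenomenon is what makes the $2$-connected hypergraph case genuinely hard --- you cannot conclude that all vertices are base vertices of one long Berge-path. The correct $2$-connected bound in~\cite{FKL} is on $e(\mathcal H)$, not on $n$, and it is obtained by a more delicate analysis of the incidence bipartite graph (roughly, bounding how many hyperedges can avoid creating a long cycle through a fixed cycle-like skeleton), not by a crude $n\leq k-1$ reduction. Your block decomposition framework and the monotonicity of $\binom{n}{r}/(n-1)$ are fine pieces of bookkeeping, but they rest on a base case that does not hold.
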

This theorem is a hypergraph version of Theorem~\ref{ErdGallaiCyc} and an analog of Theorem~\ref{lemons}. It also somewhat refines Theorem~\ref{lemons} for $k\geq r+3$.
Later,  Ergemlidze, Gy\H{o}ri, Methuku, Salia, Thompkins, and Zamora~\cite{EGMSTZ} extended the results to
to $k\in \{  r+1,r+2\}$:
\begin{thm}[Ergemlidze et al.~\cite{EGMSTZ}]\label{EGMSTZ}
If $k \geq 4$
 and  $\mathcal H$ is  an $n$-vertex $r$-graph  with no Berge-cycles of length $k$ or longer, then $k =r+1$ and $e(\mathcal H) \leq n-1$, or $k= r+2$ and $e(\mathcal H) \leq \frac{n-1}{k-2}{k-1 \choose r}$. 
\end{thm}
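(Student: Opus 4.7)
My plan is to pass to the incidence bipartite graph $B = B(\mathcal{H})$, whose parts are $V(\mathcal{H})$ (of size $n$) and $E(\mathcal{H})$ (of size $m$), with $v \sim e$ iff $v \in e$. Each $e \in E(\mathcal{H})$ then has degree exactly $r$ in $B$, and a Berge cycle of length $\ell$ in $\mathcal{H}$ is precisely a cycle of length $2\ell$ in $B$, because any cycle of $B$ must alternate between the two parts. Thus the hypothesis ``no Berge cycle of length $\geq k$ in $\mathcal{H}$'' is equivalent to ``no cycle of length $\geq 2k$ in $B$'', and the goal becomes: bound $m$ in terms of $n$, $r$, and $k$, given this constraint on $B$ together with the $r$-regularity of its $E$-side.

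For both critical values $k = r+1$ and $k = r+2$, I would argue by induction on $n + m$, working with a minimum counterexample $\mathcal{H}$. In the inductive step I would look for a ``peelable'' local configuration---for instance, a vertex $v$ of degree $1$ in $\mathcal{H}$ whose unique hyperedge $e$ can be removed, reducing both $m$ and $n$ in a way compatible with the target inequality, or an $E$-vertex whose neighborhood has small overlap with the rest of $B$. If no such peelable configuration exists, then $\mathcal{H}$ is locally dense (every vertex has degree at least $2$, so $rm \geq 2n$), and I would take a longest Berge path $P$ in $\mathcal{H}$ and exploit the high degree at its two endpoints to close $P$ into a Berge cycle of length at least $k$, contradicting the hypothesis. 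Equivalently in $B$, one takes a longest path and shows its endpoints can be joined through vertices/edges already traversed to yield a cycle of length at least $2k$.

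The main obstacle is achieving the sharp bound. A direct application of Erd\H{o}s--Gallai (Theorem~\ref{ErdGallaiCyc}) to $B$ gives only $rm \leq (k-1)(n + m - 1)$, which is completely vacuous when $k = r+1$ and merely loose when $k = r+2$. So the long-cycle-closing argument must genuinely exploit both the bipartite structure of $B$ and the $r$-regularity of its $E$-side simultaneously, presumably via an ear decomposition of $B$ rooted at a longest cycle in which each ear's length is constrained by the absence of cycles of length $\geq 2k$, or via a careful analysis of how cycles of length $< 2k$ can share vertices of $B$. Identifying the extremal configurations---and showing they account for all equality cases on both the hypergraph and the bipartite sides---will be the most delicate part of the argument, since the bound structure changes qualitatively between the $k = r+1$ regime (linear in $n$ with slope $1$) and the $k = r+2$ regime (with the extra factor $\binom{r+1}{r}/r$).
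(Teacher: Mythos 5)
This theorem is quoted in the paper as an external result of Ergemlidze, Gy\H{o}ri, Methuku, Salia, Tompkins, and Vizer (\cite{EGMSTZ}, personal correspondence); the paper does not prove it, and in fact Remark~\ref{r1} explains that the authors deleted their own partial treatment of the $k=r$ boundary case precisely because this theorem now covers the main question. So there is no ``paper's own proof'' to compare against --- you are being asked to blind-prove a cited result, and your review should be judged on its internal completeness.

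On that score, what you have written is a research plan, not a proof. Every load-bearing step is conditional (``I would argue by induction,'' ``I would look for a peelable configuration,'' ``presumably via an ear decomposition''). The reduction to the incidence bigraph $B$ and the observation that Erd\H{o}s--Gallai applied crudely to $B$ gives $rm \leq (k-1)(n+m-1)$, which is vacuous at $k=r+1$, are both correct and correctly diagnose why the theorem is hard. But the plan then halts exactly at the point where the work begins. Three concrete gaps: (1) You claim that absence of a peelable configuration forces $rm \geq 2n$ via minimum degree $\geq 2$ on the $V$-side, but for $k=r+1$ the target bound $m \leq n-1$ is \emph{below} this threshold, so after this step you would still need to rule out the dense regime entirely, not merely note that it exists. (2) ``Take a longest Berge path and close it into a long cycle'' does not come with a mechanism. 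In $B$, a longest path may end at a $V$-side vertex of small degree, and the standard rotation/closure argument (as in \cite{FKL}) is known to \emph{fail} for $k \in \{r+1, r+2\}$ --- that failure is the entire reason this theorem required a separate paper from the $k\geq r+3$ case. You need to say what replaces it. (3) You never separate the two cases $k=r+1$ and $k=r+2$, which have structurally different extremal configurations (a tight-tree-like structure versus a $(k-2)$-overlapping-block structure), and your plan gives no handle on identifying either. As written, the proposal would not survive an attempt to execute it.
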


 %proved that the bound of Theorem~\ref{FKL} holds alsofor  $k\in \{  r+1,r+2\}$.
%.  Namely $e(\mathcal H) \leq n-1$ for $r= k+1$, and $e(\mathcal H) \leq \frac{n-1}{k-2}{k-1 \choose r}$ for $k=r+2$.

%\begin{thm}[Gy\H{o}ri, Methuku, Salia, Tompkins, Vizer~\cite{connp}]Let $\mathcal H$ be an $r$-uniform connected $n$-vertex hypergraph with no Berge-path of length $k$. Then \[\lim_{k\to \infty} \lim_{n \to \infty} \frac{|E(\mathcal H)|}{k^{r-1}n} \leq \frac{1}{2^{r-1}(r-1)!}.\]\end{thm}

The goal of this paper is to prove a hypergraph version of Theorem~\ref{ErdGallaiCyc}
 for $r$-graphs with no Berge-cycles of length $k$ or longer in the case $k\leq r$. Our result is an analog of Theorem~\ref{EGp} and  yields a refinement of it.
 
  Our approach is to consider  bipartite graphs in which the vertices in one of the parts have degrees at least $r$, and to 
analyze
the structure of such graphs with circumference less than $2k$. After that, we apply the obtained results to the incidence graphs
of $r$-uniform hypergraphs.
 In this way, our methods differ from those of \cite{lemons},\cite{davoodi}, \cite{connp}, and ~\cite{FKL}.

\section{Notation and results}

\subsection{Hypergraph notation}
The {\em lower rank} of a multi-hypergraph $\mathcal H$ is the size of a smallest edge of $\mathcal H$.

In view of the structure of our proof, it is more convenient to consider hypergraphs with lower rank at least $r$ instead of
$r$-uniform hypergraphs. It also yields formally stronger statements of the results.

The {\em incidence graph} $G(\mathcal H)$ of a  multi-hypergraph $\mathcal H=(V,E)$ is the bipartite graph with parts $V$ and $E$ where
 $v\in V$ is adjacent to $e\in E$ iff in $\mathcal H$ vertex $v$ belongs to edge $e$.

There are several versions of connectivity of hypergraphs.
We will call a multi-hypergraph $\mathcal H$  {\em  2-connected} if the incidence graph $G(\mathcal H)$ is 2-connected.

A {\em hyperblock} in a multi-hypergraph $\mathcal H$ is  a maximal  2-connected sub-multi-hypergraph of $\mathcal H$. 

%\end{document}

\begin{definition}For integers $r, k $ with $r \geq k+1$, we call a multi-hypergraph with lower rank at least $r$ an {\em $(r+1,k-1)$-block} if it contains exactly $r+1$ vertices and $k-1$ hyperedges.
\end{definition}

\begin{definition}A multi-hypergraph $\mathcal H$ with lower rank at least $r$ is an  {\em $(r+1,k-1)$-block-tree} if \\
(i) \;\; every hyperblock of $\mathcal H$ is an $(r+1,k-1)$-block,\\
(ii) \;\; all cut-vertices of the incidence graph $G(\mathcal H)$ of $\mathcal H$ are in $V$.
\end{definition}

\begin{figure}\centering
\includegraphics[scale=.5]{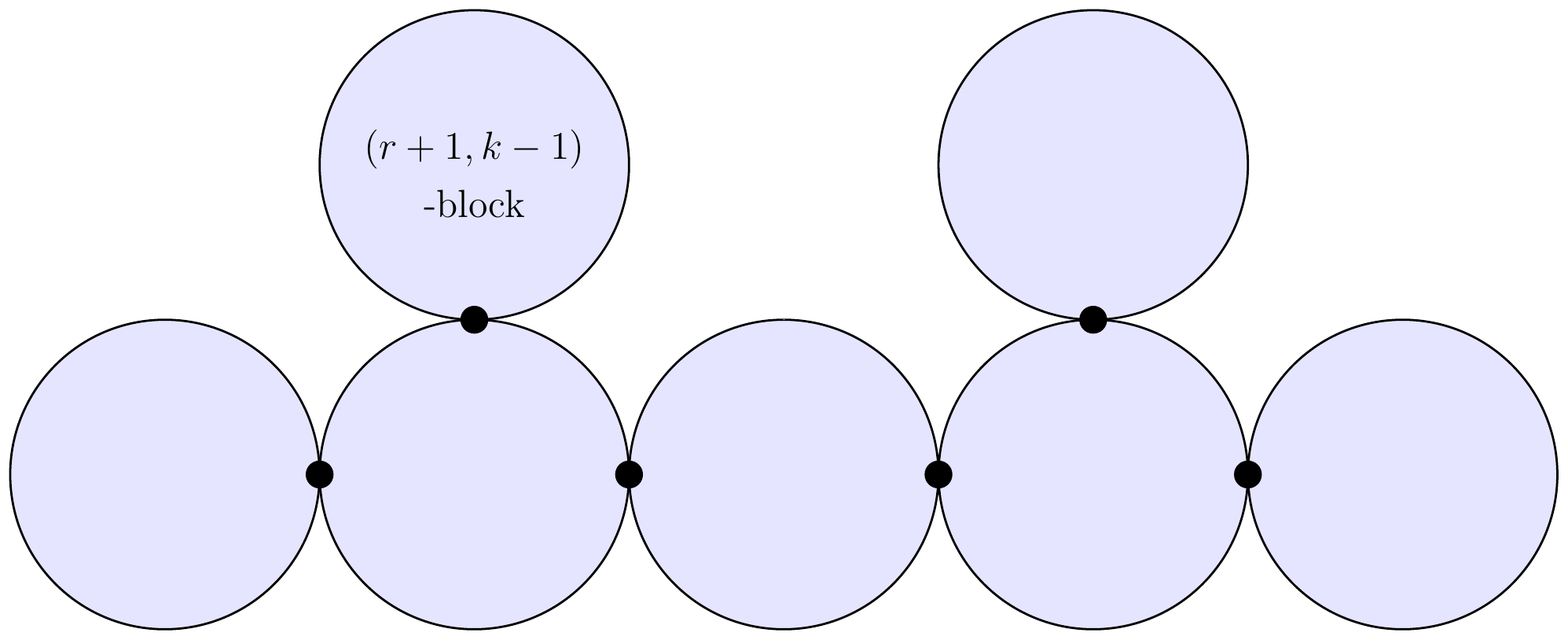}
\caption{An $(r+1,k-1)$-block tree. Each hyperblock contains $r+1$ vertices and $k-1$ hyperedges.}
\end{figure}
An $(r+1,k-1)$-block cannot contain a Berge-cycle of length $k$ or longer because it contains fewer than $k$ edges. Therefore an $(r+1,k-1)$-block-tree also cannot contain such a cycle.

\subsection{ Results for hypergraphs}

%\end{document}
Our main result is:

\begin{thm}\label{EG_full} Let $k \geq 4, r \geq k+1$ and let $\mathcal H$ be  an $n$-vertex  multi-hypergraph  such that $\mathcal H$ has lower rank at least $r$, and each edge of $\mathcal H$ has multiplicity at most $k-2$. 
If  $\mathcal H$ has no Berge-cycles of length $k$ or longer, then $e(\mathcal H)\leq \frac{(k-1)(n-1)}{r}$, and equality holds if and only if $\mathcal H$ is an $(r+1,k-1)$-block-tree.
\end{thm}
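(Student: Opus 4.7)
The strategy is to work through the bipartite incidence graph $G = G(\mathcal H)$ with parts $V$ (the vertex set of $\mathcal H$, of size $n$) and $E$ (the edge set of $\mathcal H$, of size $e(\mathcal H)$). The translation is as follows: each $e \in E$ has $\deg_G(e) \ge r$ since edges of $\mathcal H$ have size at least $r$; Berge-cycles of length $\ell$ in $\mathcal H$ correspond bijectively to cycles of length $2\ell$ in $G$; and the bound of $k-2$ on edge multiplicity in $\mathcal H$ becomes the statement that at most $k-2$ vertices of $E$ share a common neighborhood in $V$. The theorem thus reduces to the graph-theoretic claim that any such bipartite graph $G$ with circumference less than $2k$ satisfies $|E| \le (k-1)(n-1)/r$, with equality only when $G$ is the incidence graph of an $(r+1,k-1)$-block-tree.

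I would prove this bipartite claim by induction on $n + |E|$, driven by the block decomposition of $G$. If $G$ contains a cut-vertex $v \in V$, I split $G$ at $v$ into $G_1$ and $G_2$ with $n_i = |V \cap V(G_i)|$ and $m_i = |E \cap V(G_i)|$ satisfying $n_1+n_2=n+1$ and $m_1+m_2=|E|$; the hypotheses on minimum degree, multiplicity, and circumference all transfer to $G_1$ and $G_2$, since every $e \in E$ lies in exactly one of the two pieces together with all of its neighbors. The inductive bounds $m_i \le (k-1)(n_i-1)/r$ then sum to the desired inequality. The case of an $E$-cut-vertex $e$ is more delicate because $e$ would be placed in both pieces but with strictly smaller degree in each, so the inductive hypothesis does not apply directly. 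I would handle this case by showing it necessarily gives a strict inequality, using the multiplicity bound to control how many vertices of $E$ can arise in the two pieces and redistributing the degree budget of $e$ to one side.

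The principal obstacle is the base case: when $G$ is $2$-connected. Here I plan to use a P\'osa/Dirac-style rotation argument. Let $C$ be a longest cycle of $G$, of length $2\ell$; if $\ell \ge k$ we are done, so assume $\ell \le k-1$. Any $e \in E \setminus V(C)$ has at least $r \ge k+1 > \ell$ neighbors in $V$; by $2$-connectivity $e$ is joined to $C$ by two internally disjoint paths, and the crowding of $e$'s many neighbors along $C$ should allow rerouting $C$ through $e$ to produce a strictly longer cycle, contradicting maximality. A parallel argument forces $V \subseteq V(C)$. Since every $e \in E$ then has its $\ge r$ neighbors among the $\ell$ vertices of $V$ on $C$, the multiplicity bound forces $\ell = k-1$ and $|V| = r+1$, i.e.\ precisely the $(r+1,k-1)$-block, which also pins down the extremal structure in this base case.

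The characterization of the extremal hypergraphs then follows by tracking equality through the induction: equality at each cut-vertex split forces equality in both pieces, no $E$-cut-vertex can appear in an extremal $G$, and each $2$-connected piece must equal the extremal $(r+1,k-1)$-block. These are exactly the conditions defining an $(r+1,k-1)$-block-tree.
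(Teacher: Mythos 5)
Your high-level architecture matches the paper's: pass to the bipartite incidence graph, induct on blocks via cut-vertices, and reduce to a base case for $2$-connected graphs. The $V$-side cut-vertex split is handled essentially as in the paper, and you correctly flag that an $E$-side cut-vertex is delicate. But the base case — the $2$-connected case — is where the real work lies, and your sketch there does not hold up.

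The claim that a longest cycle $C$ of a $2$-connected $G$ with $c(G)<2k$ must satisfy $E\subseteq V(C)$ and $V\subseteq V(C)$ is false, and in fact internally inconsistent with your own intended conclusion. First, the rotation argument doesn't apply as stated: a vertex $e\in E\setminus V(C)$ has $\ge r$ neighbors in $V$, but nothing forces those neighbors to lie on $C$ (they may all lie outside $C$), so there is no ``crowding along $C$'' to exploit. Second, for $v\in V\setminus V(C)$ there is no degree hypothesis at all — vertices of $V$ can have degree $2$ — so there is no ``parallel argument'' forcing $V\subseteq V(C)$; indeed, the extremal graph $K_{k-1,r+1}$ is a direct counterexample, since a longest cycle in it has length $2(k-1)$ and uses only $k-1$ of the $r+1$ vertices in $V$. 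Third, even granting $E,V\subseteq V(C)$, you would have $|V\cap V(C)|=\ell\le k-1<r$, yet every $e\in E\subseteq V(C)$ has $\ge r$ neighbors in $V\cap V(C)$ — a flat contradiction, not a derivation that $|V|=r+1$.

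More fundamentally, the $2$-connected case does not reduce to ``$G$ is a single $(r+1,k-1)$-block.'' There exist $2$-connected bipartite graphs satisfying the hypotheses with $|V|$ arbitrarily large (see the multi-hypergraph construction $\mathcal H_t$ in Remark 2.11 of the paper). The correct $2$-connected bound is the strictly stronger inequality $m\le\frac{k}{2r-k+2}(n-1)$ (Theorem 2.14), and proving it is the bulk of the paper: the authors must classify longest paths in $2k$-saturated graphs into ``saturated crossing formations'' (Sections 4--5), rule out large complete bipartite subgraphs (Section 6), and run a two-parameter disintegration with a deficiency-tracking induction to handle $E$-side cut-vertices (Sections 7--8). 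Your proposal replaces all of this with a one-paragraph rotation argument, which is the gap.
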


As a corollary of Theorem~\ref{EG_full} we obtain a slight generalization of Theorem~\ref{EGp}~\cite{lemons} (their result is
for uniform hypergraphs without repeated edges):

\begin{cor}\label{gyori2}Let $r \geq k+1 \geq 3$, and let $\mathcal H$ be  an $n$-vertex multi-hypergraph  such that $\mathcal H$ has lower rank at least $r$, and each edge of $\mathcal H$ has multiplicity at most $k-2$. If  $\mathcal H$ has no Berge-paths of length $k$, then $e(\mathcal H) \leq \frac{(k-1)n}{r+1}$. \end{cor}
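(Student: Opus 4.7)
The plan is to reduce Corollary~\ref{gyori2} to Theorem~\ref{EG_full} via a ``universal vertex'' construction, focusing on the main case $k \geq 4$ (the case $k=3$ reduces to Theorem~\ref{EGp}, and $k=2$ is vacuous since the multiplicity bound forces $e(\mathcal{H}) = 0$). Given $\mathcal H$ satisfying the hypotheses of the corollary, I form $\mathcal H^+$ on $n+1$ vertices by introducing a new vertex $v^\ast$ and inserting it into every hyperedge. Then $e(\mathcal H^+) = e(\mathcal H)$, the multiplicities are preserved and so still bounded by $k-2$, and the lower rank is raised to at least $r+1$.

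The crux is the claim that $\mathcal H^+$ has no Berge-cycle of length $k$ or longer. Once this is established, Theorem~\ref{EG_full} applied to $\mathcal H^+$ with parameters $(k, r+1)$ (valid since $k \geq 4$ and $r+1 \geq k+1$) yields
\[ e(\mathcal H) \;=\; e(\mathcal H^+) \;\leq\; \frac{(k-1)((n+1)-1)}{r+1} \;=\; \frac{(k-1)n}{r+1}. \]
To prove the claim, I would argue the contrapositive: given a Berge-cycle $w_1 f_1 w_2 f_2 \cdots w_\ell f_\ell w_1$ in $\mathcal H^+$ of length $\ell \geq k$, I would produce a Berge-path of length $k$ in $\mathcal H$. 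If $v^\ast$ is not among the $w_i$'s, the cycle already sits inside $\mathcal H$; deleting an edge gives a Berge-path of length $\ell-1$, and when $\ell = k$ I further reuse the deleted edge $f_k$ to append a vertex from $f_k \setminus \{w_1, \ldots, w_k\}$, which is nonempty thanks to $|f_k| \geq r \geq k+1$. If instead $v^\ast = w_1$, the truncated sequence $w_2, f_2, w_3, \ldots, f_{\ell-1}, w_\ell$ is already a Berge-path in $\mathcal H$ of length $\ell-2$, and the two cycle-edges $f_1, f_\ell$ incident to $v^\ast$ remain unused but contain $w_2, w_\ell$ respectively as edges of $\mathcal H$, so I can use them to prepend and append fresh vertices at the ends of the path until its length reaches $k$.

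The main obstacle will be the subcase $\ell = k$ with $v^\ast = w_1$: removing $v^\ast$ from the cycle shortens the Berge-path by two, so \emph{both} $f_1$ and $f_\ell$ must supply an extension vertex outside the already-used set. Here the hypothesis $r \geq k+1$ is used at its tightest: $f_1$ must contribute a vertex outside the $(k-1)$-element set $\{w_2, \ldots, w_k\}$, and then $f_\ell$ must contribute one more outside the resulting $k$-element set, requiring $|f_1|, |f_\ell| \geq k+1$. Weakening the lower rank even by one would break this step, mirroring the same numerical hypothesis imposed in Theorem~\ref{EG_full}.
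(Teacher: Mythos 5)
Your proof follows the same approach as the paper's: introduce a universal vertex $v^\ast$ to form $\mathcal H^+$ with raised lower rank, argue that any Berge-cycle of length $\geq k$ in $\mathcal H^+$ would, after deleting $v^\ast$ and using the rank hypothesis $r\geq k+1$ to supply extension vertices at the ends, yield a Berge-path of length $k$ in $\mathcal H$, and then apply Theorem~\ref{EG_full} to $\mathcal H^+$. The only caveat is your handling of $k=3$: Theorem~\ref{EGp} is stated for $r$-uniform simple hypergraphs rather than multi-hypergraphs of lower rank at least $r$, so it does not directly cover that case --- although the paper's own proof silently assumes $k\geq 4$ when invoking Theorem~\ref{EG}, so this is a gap shared with the original rather than a defect specific to your argument.
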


Theorem~\ref{EG_full} also implies the following analogue of the Erd\H{o}s--Gallai theorem for cycles in 
  $r$-uniform hypergraphs (without repeated edges).

\begin{thm}[Erd\H{o}s--Gallai for hypergraphs]\label{EG}
Let $k \geq 4, r \geq k+1$, and let $\mathcal H$ be an $n$-vertex $r$-graph with no Berge-cycles of length $k$ or longer. Then $e(\mathcal H) \leq \frac{(k-1)(n-1)}{r}$. Furthermore, equality holds if and only if $\mathcal H$ is an $(r+1,k-1)$-block-tree.  
\end{thm}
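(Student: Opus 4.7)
The plan is to deduce Theorem~\ref{EG} as a direct corollary of the more general Theorem~\ref{EG_full}. First I would verify that any $n$-vertex $r$-uniform hypergraph $\mathcal H$ with no Berge-cycle of length $k$ or longer satisfies the hypotheses of Theorem~\ref{EG_full}: the lower rank of $\mathcal H$ equals $r$, each edge has multiplicity $1 \leq k-2$ (using $k \geq 4$), and the assumption on long Berge-cycles is literally the same in both statements. Applying Theorem~\ref{EG_full} then immediately yields $e(\mathcal H) \leq \frac{(k-1)(n-1)}{r}$, and in the case of equality forces $\mathcal H$ to be an $(r+1,k-1)$-block-tree.

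For the converse direction of the characterization, I would check that an $r$-uniform $(r+1,k-1)$-block-tree actually meets the bound and that such $r$-uniform objects genuinely exist. Within each hyperblock on $r+1$ vertices one must select $k-1$ distinct $r$-subsets, which is possible because $\binom{r+1}{r} = r+1 \geq k+2 > k-1$ by the hypothesis $r \geq k+1$. A routine induction on the number of blocks (each new block after the first shares exactly one cut-vertex in $V$ with the previous part of the block-tree, contributing $r$ new vertices and $k-1$ new edges) then shows that a block-tree on $n$ vertices has precisely $\frac{n-1}{r}$ blocks and hence exactly $\frac{(k-1)(n-1)}{r}$ edges.

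Since the substantive combinatorics has already been absorbed into Theorem~\ref{EG_full}, there is no serious obstacle at this stage. The only point requiring even momentary attention is confirming that restricting to simple $r$-uniform hypergraphs does not exclude any of the extremal block-trees produced by Theorem~\ref{EG_full}, and this is immediate from the elementary inequality $r+1 \geq k+2$.
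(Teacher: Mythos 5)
Your proposal is correct and takes essentially the same route as the paper, which also derives Theorem~\ref{EG} as an immediate consequence of Theorem~\ref{EG_full} after noting that a simple $r$-uniform hypergraph has lower rank $r$ and multiplicity $1\leq k-2$. The additional verification that the $r$-uniform $(r+1,k-1)$-block-trees exist (via $\binom{r+1}{r}=r+1\geq k+2 > k-1$) and attain the bound is a reasonable, if brief, supplement that the paper leaves implicit.
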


\begin{rem}\label{r1} In an earlier version of this paper, we also proved the asymptotically exact result that
{\em every $n$-vertex multi-hypergraph of lower rank $r$ with no Berge-cycles of length $r$ or longer has fewer than $n$ edges} (i.e., the case for $r=k$).
But since the result for simple $r$-graphs (which is the main question) directly follows from Theorem~\ref{EGMSTZ} by Ergemlidze et al~\cite{EGMSTZ}, we have omitted the parts of
proofs showing this.
%Theorem~\ref{EG} and the results of~\cite{FKL} and ~\cite{EGMSTZ} cover all pairs of $r$ and $k$ except when $r = k$. The methods used in this paper can also prove that $r$-uniform hypergraphs (or multi-hypergraphs with multiplicity at most $r-2$) with no Berge cycles of length $r$ or longer have at most $n-1$ edges. This bound however is not sharp and can also be implied by the $k= r+1$ case of~\cite{EGMSTZ}, and hence we leave out the proof.
\end{rem}

%Unlike other pairs of $r$ and $k$, t
 There is a phase transition when $r = k$. Let $\mathcal H$ be an $r$-uniform hypergraph with 
vertex set $\{v_1,\ldots,v_n\}$  and edge set $\{e_1,\ldots,e_{n-r+1}\}$, where $e_i=\{v_i\}\cup\{v_n,v_{n-1},\ldots,v_{n-r+2}\}$.
%$m=n-k+1$ edges, such that all hyperedges contain the same $(k-1)$-subset of vertices along with one additional (unique) vertex.    
 Then the longest Berge-cycle in $\mathcal H$ has length $r-1$, and $m =n- (r-1)$. Thus when  $r$ is fixed and $n$ is large, 
   $e(\mathcal H)=n-r+1 > \frac{(r-1)(n-1)}{r}$.
   But when $n$ is small, $\frac{(r-1)(n-1)}{r}$ is larger.
   
We believe that this construction and the aforementioned $(r+1,k-1)$-block-trees are optimal. 

\begin{conj}
Let $\mathcal H$ be an $r$-uniform $n$-vertex hypergraph with no cycle of length $r$ or longer. Then $e(\mathcal H) \leq \max\{\frac{r-1}{r}(n-1), n-r+1\}$. 
\end{conj}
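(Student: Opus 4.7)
The plan is to follow the incidence-graph strategy used for Theorem~\ref{EG_full}: translate the problem into the incidence bipartite graph $G(\mathcal H)$, whose $E$-side vertices have degree exactly $r$ and which contains no cycle of length at least $2r$, and then carry out an extremal analysis. The new feature at $k=r$ is the emergence of a second extremal family---the ``sunflower'' with an $(r-1)$-element core and $n-r+1$ petals---which overtakes the block-tree family once $n\ge (r-1)^2+1$. Any proof must accommodate this phase transition and exhibit both extremal structures.

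First I would reduce to the 2-connected case via the hyperblock decomposition. Writing $f(n)=\max\{\tfrac{r-1}{r}(n-1),\,n-r+1\}$, one checks that $f$ is convex and piecewise-linear with slopes $\tfrac{r-1}{r}$ and $1$, and that $f(n_1)+f(n_2)\le f(n_1+n_2-1)$ whenever $n_1,n_2\ge r+1$. So if $G(\mathcal H)$ has a cut-vertex in $V$ splitting $\mathcal H$ into $\mathcal H_1,\mathcal H_2$ with $|V(\mathcal H_i)|=n_i$ and $n_1+n_2=n+1$, induction on $n$ gives $e(\mathcal H)\le f(n_1)+f(n_2)\le f(n)$. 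A cut-vertex of $G(\mathcal H)$ in $E$ corresponds to a hyperedge $e$ carrying a pendant vertex; since this configuration already forces $\mathcal H$ into the sunflower regime, a direct argument handles that case and leaves us with $\mathcal H$ 2-connected.

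The heart of the proof, and the main obstacle, is classifying the 2-connected extremal configurations. I would fix a longest Berge-cycle $C$ of length $t\le r-1$ and analyze hyperedges outside $C$ via rerouting. If $e\notin E(C)$ meets $V(C)$ in two vertices $v_i,v_j$ and also contains some $u\notin V(C)$, then under mild extra hypotheses one can reroute $C$ through $u$ to obtain a longer Berge-cycle, contradicting the maximality of $t$. Pushing this analysis should force one of two outcomes: either $n\le r+1$, so $\mathcal H$ is an $(r+1,r-1)$-block with at most $r-1$ hyperedges, or there is an $(r-1)$-set of vertices common to every hyperedge, making $\mathcal H$ a sub-hypergraph of the sunflower with at most $n-r+1$ edges. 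The hardest step will be ruling out ``hybrid'' 2-connected examples in which most but not all hyperedges share a common $(r-1)$-core: here the rerouting arguments give less direct information, and I expect a Kopylov-style weight argument on long paths in $G(\mathcal H)$---tracking the deficit contributed by each hyperedge that misses the purported core---will be needed to close the gap.
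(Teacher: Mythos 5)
The statement you are proving is explicitly a \emph{Conjecture} in the paper, and the paper supplies no proof of it. Indeed, Remark~\ref{r1} notes that an earlier version of the manuscript established only the weaker asymptotic bound $e(\mathcal H)<n$ for the case $k=r$, which is far from the claimed $\max\{\tfrac{r-1}{r}(n-1),\,n-r+1\}$. What you have submitted is a research plan, not a proof, and you say so yourself: you assert that ``a direct argument handles that case'' without giving the argument, write that ``pushing this analysis \emph{should} force one of two outcomes'' without carrying out the analysis, and close by saying you ``\emph{expect} a Kopylov-style weight argument\ldots will be needed to close the gap.''

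The one step you do flesh out---the convexity and superadditivity of $f(n)=\max\{\tfrac{r-1}{r}(n-1),\,n-r+1\}$, giving the reduction across $V$-cut-vertices of the incidence graph---is correct, and the incidence-graph / hyperblock framework is the sensible one (it is precisely what the paper uses for $r\ge k+1$). But that framework is exactly where the paper's machinery stops working once $r=k$: Theorems~\ref{main} and~\ref{t1} hypothesize $r\ge k+1$ throughout, several inequalities in their proofs rely on that slack, and---more to the point---the classification of $2$-connected extremal structures must now detect \emph{two} competing families, the $(r+1,r-1)$-block and the $(r-1)$-core sunflower, with a crossover at $n=(r-1)^2$. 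The step you yourself flag as ``the hardest'' (ruling out hybrid $2$-connected configurations where most but not all hyperedges share a common $(r-1)$-core) is not a loose end to be tied up; it is the entire substance of the conjecture, and neither the paper's saturated-crossing-formation analysis nor its disintegration/weight bookkeeping extends to it. Until you produce the lemma that excludes those hybrids---and you do not even sketch a candidate statement---this is an organized restatement of why $k=r$ is difficult, not a proof.
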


The key to our proof is
 a stronger version of Theorem~\ref{EG_full} for multi-hypergraphs that are  2-connected.

\begin{thm}\label{EGblock}Let $k \geq 4, r \geq k+1$ and let $\mathcal H$ be  an $n$-vertex multi-hypergraph  such that $\mathcal H$ is  2-connected, has lower rank at least $r$, and each edge of $\mathcal H$ has multiplicity at most $k-2$. If $\mathcal H$ contains no Berge-cycles of length $k$ or longer, then \[e(\mathcal H) \leq \max\{k-1, \frac{k}{2r-k+2}(n-1)\}.\]
\end{thm}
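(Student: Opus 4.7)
The plan is to translate the problem to the bipartite incidence graph $G := G(\mathcal H)$, with parts $A := V(\mathcal H)$ of size $n$ and $B := E(\mathcal H)$ of size $m$. Under the hypotheses of the theorem, $G$ is 2-connected, every vertex of $B$ has degree at least $r$ in $G$, the circumference of $G$ is less than $2k$ (since a Berge-cycle of length $\ell$ in $\mathcal H$ corresponds to an ordinary cycle of length $2\ell$ in $G$), and no $k-1$ vertices of $B$ share the same neighborhood in $A$ (the reformulation of the multiplicity condition). We may assume $m \geq k$, since otherwise $m \leq k-1$ and the bound is immediate. The target is therefore $m \leq \tfrac{k}{2r-k+2}(n-1)$.

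First, I would fix a longest cycle $C$ of $G$, of length $2\ell$ with $\ell \leq k-1$, and set $C_A := V(C) \cap A$ and $C_B := V(C) \cap B$, so $|C_A| = |C_B| = \ell$. An immediate consequence of the degree assumption is that no vertex $b \in B \setminus C_B$ can form a singleton component of $G - V(C)$: otherwise all of its $\geq r$ neighbors would lie in $C_A$, yet $|C_A| = \ell \leq k-1 < r$. Hence every component of $G - V(C)$ that meets $B$ also meets $A$.

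Next, I would apply a P\'osa-style rotation-and-extension analysis to each connected component $H$ of $G - V(C)$. The 2-connectivity of $G$ guarantees at least two attachment edges from $H$ to $V(C)$, and maximality of $C$ forbids rerouting through $H$ into a longer cycle; combined with $\deg_G(b) \geq r$ for every $b \in V(H) \cap B$, and with the no-$k-1$-twins condition, this should bound the number of $B$-vertices in $H$ linearly in terms of the number of $A$-vertices in $H$ plus a controlled contribution from the attachments. Summing these local estimates over all components of $G - V(C)$ and adding the contribution from $C$ itself should give the global bound $m \leq \tfrac{k}{2r-k+2}(n-1)$.

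The hardest part will be extracting the sharp coefficient $\tfrac{k}{2r-k+2}$ from the rotation-extension analysis. Both auxiliary hypotheses are essential here: the high-degree condition forces the neighborhood of each $B$-vertex to be wide (so it must spill into $C_A$ or into new $A$-vertices), while the multiplicity bound prevents $k-1$ or more $B$-vertices from sharing a common neighborhood. The combined effect of these constraints is precisely what yields the factor $2r-k+2$ in the denominator. A secondary subtlety will be handling the regime where the constant $k-1$ dominates the fractional bound, which should correspond to $\mathcal H$ being close to a single $(r+1,k-1)$-block.
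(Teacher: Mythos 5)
Your reduction to the incidence graph is correct and is exactly what the paper does: with $A = V(\mathcal H)$, $B = E(\mathcal H)$, every vertex of $B$ has degree at least $r$, $c(G)<2k$, and no $k-1$ vertices of $B$ share a neighborhood. Your opening observation (no $b\in B\setminus C_B$ has all $\geq r$ neighbors in $C_A$ since $|C_A|\leq k-1<r$) is also fine. But from there the proposal is a plan, not a proof, and the plan has a genuine gap at precisely the step you flag as ``the hardest part.'' Fixing a longest cycle $C$ and summing ``local estimates'' over components of $G-V(C)$ does not obviously produce the coefficient $\frac{k}{2r-k+2}$, and you give no mechanism by which it would: you never specify what the per-component bound is, how the attachment vertices on $C$ are apportioned (they are shared boundaries, so a naive sum over components plus ``the contribution from $C$'' will double-count or lose the crucial $-1$ in $n-1$), nor how the no-$(k-1)$-twins condition actually enters the count. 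Rotation–extension relative to a longest cycle gives you that attachments of a component are close together and that paths inside a component cannot be rerouted into $C$, but this controls lengths, not the ratio of $B$-vertices to $A$-vertices inside $H$, which is what you need.

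The paper's route is structurally different and is where the coefficient actually comes from. It works with a \emph{longest path} (not a longest cycle) in a $2k$-saturated graph, introduces a \emph{deficiency} weight on $X$-vertices to enable induction over blocks, and runs a \emph{disintegration} (iterated low-degree deletion) to locate either a large complete bipartite core or a path in ``saturated crossing formation.'' In the complete-bipartite case, an edge-count against the disintegration sequence gives $m^*\leq \frac{k}{2r-k+2}(n-1+D)$ directly (this is where $2r-k+2$ appears); in the crossing-formation case, the structure theorem (Theorem~\ref{cycle1}) together with Lemmas~\ref{sep}, \ref{empty}, \ref{nl2} forces the same inequality. None of these ingredients appears in your outline. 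If you want to pursue the longest-cycle decomposition, you would need, at minimum, a quantitative per-component lemma of the form $|B\cap H|\leq \frac{k}{2r-k+2}|A\cap H| + (\text{attachment correction})$ that telescopes to $n-1$, and it is not clear that the maximality of $C$ and the degree/twin conditions are enough to prove such a lemma without essentially redoing the paper's crossing-formation analysis.
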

A proof  similar to that of Corollary~\ref{gyori2} (see the last section) yields the following result for paths in connected hypergraphs.

\begin{cor}Let $r \geq k \geq 3$, and let $\mathcal H$ be a connected   $n$-vertex $r$-graph  with no Berge-path of length $k$. Then \[e(\mathcal H) \leq \max\{k-1,  \frac{k}{2r-k+4}n\}.\] 
\end{cor}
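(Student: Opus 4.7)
The plan is to mimic the proof of Corollary~\ref{gyori2} by adjoining a universal vertex to turn the Berge-path problem for $\mathcal H$ into a Berge-cycle problem governed by Theorem~\ref{EGblock}. Given a connected $r$-graph $\mathcal H$ with no Berge-path of length $k$, I would form the multi-hypergraph $\mathcal H^\ast$ on vertex set $V(\mathcal H)\cup\{v^\ast\}$ with edges $\{e\cup\{v^\ast\}:e\in E(\mathcal H)\}$, where $v^\ast\notin V(\mathcal H)$. Then $\mathcal H^\ast$ has $n+1$ vertices, the same number of edges as $\mathcal H$, lower rank at least $r+1$, and preserved edge multiplicities. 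Since $v^\ast$ belongs to every edge and $\mathcal H$ is connected, the incidence graph $G(\mathcal H^\ast)$ has no cut-vertex in $V(\mathcal H^\ast)$; the only way it can fail to be $2$-connected is through edges of $\mathcal H$ containing a degree-$1$ vertex of $\mathcal H$, and these pendant vertices are not involved in any Berge-cycle, so a short preprocessing step lets us assume $\delta(\mathcal H)\geq 2$, making $\mathcal H^\ast$ $2$-connected.

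The technical heart is to show that $\mathcal H^\ast$ has no Berge-cycle of length $k$ or longer. Let $C$ be such a cycle with $\ell\geq k$ edges $f_1,\dots,f_\ell$ and distinct base vertices $u_1,\dots,u_\ell$. If $v^\ast\notin\{u_1,\dots,u_\ell\}$, then $C$ induces a Berge-cycle of length $\ell$ in $\mathcal H$ (via $f_i\setminus\{v^\ast\}$); for $\ell\geq k+1$ any $k$ consecutive edges yield a Berge-path of length $k$ in $\mathcal H$, and for $\ell=k$ the fact that $\mathcal H$ is $r$-uniform with no repeated edges forces some $f_i\setminus\{v^\ast\}$ to contain a vertex outside $\{u_1,\dots,u_k\}$ with which the cycle opens to such a path. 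If instead $v^\ast=u_j$ for some $j$, I would delete from $C$ the two edges $f_{j-1},f_j$ incident to $v^\ast$ and extend the resulting Berge-path of length $\ell-2$ at both endpoints by re-attaching $f_j$ and $f_{j-1}$ with fresh vertices chosen from $f_j\setminus(\{v^\ast\}\cup\{u_{j+1},\dots,u_{j-1}\})$ and $f_{j-1}\setminus(\{v^\ast\}\cup\{u_{j+1},\dots,u_{j-1}\})$, exploiting $|f_i|\geq r+1$. The subcase $\ell\geq k+2$ is easier still: pick $k$ consecutive edges of $C$ skipping both $f_{j-1}$ and $f_j$. Every subcase produces a Berge-path of length $k$ in $\mathcal H$, contradicting the hypothesis.

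Having established that $\mathcal H^\ast$ is $2$-connected, has lower rank at least $r+1$, preserved multiplicities, and no Berge-cycle of length $k$ or longer, Theorem~\ref{EGblock} applied to $\mathcal H^\ast$ with $r+1$ in place of $r$ and $n+1$ in place of $n$ yields
\[
e(\mathcal H)\;=\;e(\mathcal H^\ast)\;\leq\;\max\Bigl\{k-1,\;\tfrac{k}{2(r+1)-k+2}\bigl((n+1)-1\bigr)\Bigr\}\;=\;\max\Bigl\{k-1,\;\tfrac{k}{2r-k+4}\,n\Bigr\},
\]
which is exactly the claimed bound. The main obstacle is the cycle-to-path transfer in the tight subcase $\ell=k$ with $v^\ast=u_j$, where the two endpoint extensions must be chosen jointly, distinct, and outside all current base vertices; the arithmetic is sharp near $r=k$ and requires a careful argument that exploits the simplicity of $\mathcal H$ (no repeated edges). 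The preliminary pendant-vertex reduction needed for $2$-connectivity of $\mathcal H^\ast$ also warrants a short but careful verification that the hypotheses and parameters are preserved under it.
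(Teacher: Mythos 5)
Your strategy is the right one --- it matches the paper's stated intent of mimicking the proof of Corollary~\ref{gyori2} by adjoining a universal vertex $v^\ast$ and invoking the $2$-connected circumference bound --- and your handling of the Berge-cycle-to-Berge-path transfer (including the tight subcase $\ell=k$, $r=k$, which does indeed need simplicity of $\mathcal H$) is essentially sound. The genuine gap is the $2$-connectivity reduction. You correctly observe that $G(\mathcal H^\ast)$ is $2$-connected iff every vertex of $\mathcal H$ lies in at least two edges, but the ``short preprocessing step'' of deleting degree-$1$ vertices to force $\delta(\mathcal H)\geq 2$ does not preserve the hypotheses of Theorem~\ref{EGblock}: after deletion, an edge that contained $t$ degree-$1$ vertices has only $r-t$ vertices, so the lower rank of the modified $\mathcal H^\ast$ drops to $r+1-\max_e t_e$, and Theorem~\ref{EGblock} applied with that smaller rank yields a coefficient $\frac{k}{2(r+1-\max_e t_e)-k+2}$ that is not offset by the decrease in $n$ (the needed inequality $p(2r-k+4)\geq 2n\max_e t_e$, with $p$ the number of degree-$1$ vertices, fails whenever $n$ is large). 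There is no $r$-uniform or lower-rank-$(r+1)$ surrogate you can pass to Theorem~\ref{EGblock} after deletion.

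The repair is to bypass the hypergraph-level Theorem~\ref{EGblock} and apply the bipartite Theorem~\ref{main} (with deficiencies) directly to the block $H_0$ of $G(\mathcal H^\ast)$ containing $v^\ast$. This block has $X_0=E(\mathcal H)$ and $Y_0=\{v^\ast\}\cup\{w\in V(\mathcal H):d_{\mathcal H}(w)\geq 2\}$, and one checks it is $2$-connected once $\mathcal H$ is connected with $e(\mathcal H)\geq 2$. An edge $e$ with $t_e$ degree-$1$ vertices has $d_{H_0}(e)=r+1-t_e$, hence deficiency $D_{H_0}(e)=t_e$ with respect to the parameter $r+1$, and $\sum_e t_e=p$. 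Thus $|Y_0|-1+D(H_0,X_0)=(n-p)+p=n$, and Theorem~\ref{main} with rank $r+1$ gives $e(\mathcal H)=|X_0|\leq\frac{k}{2r-k+4}n$ whenever $e(\mathcal H)\geq k$ (the case $e(\mathcal H)\leq k-1$ being trivial). The deficiency contribution exactly cancels the reduction in $|Y_0|$, which is precisely why the paper formulates Theorem~\ref{main} with deficiencies rather than with a uniform degree floor. A minor further caveat that affects both your write-up and the statement as printed: the corollary allows $k\geq 3$, while Theorems~\ref{EGblock} and~\ref{main} require $k\geq 4$, so $k=3$ is not covered by this machinery.
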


\begin{rem} We do not know if the bound for $e(\mathcal H)$ in Theorem~\ref{EGblock} is sharp. 
But the following multi-hypergraph construction shows that when $k$ is much smaller than $r$, our bound is asymptotically (when $r$ tends to infinity)
optimal: Let $k\geq 3$ be odd, $t \in \mathbb N$ and $V(\mathcal H_t) = \{a, b\} \cup V_1 \cup \ldots \cup V_t$ where $|V_i| = r-2$ for each $1\leq i \leq t$, and the $V_i$'s are pairwise disjoint. The edge set of $\mathcal H_t$ consists of $\frac{k-1}{2}$ copies of $V_i \cup \{a, b\}$ for each $1\leq i \leq t$. Then each Berge-cycle in $\mathcal H_t$ intersects at most two $V_i$'s and hence contains at most $k-1$ hyperedges.  We also have
\[e(\mathcal H_t) = \frac{k-1}{2} t = \frac{k-1}{2r-4}(n-2).\]
\end{rem}

\subsection{ Results for bipartite graphs}
By definition, a multi-hypergraph $\mathcal H$ has a cycle of length $k$ if and only if
the incidence graph $G(\mathcal H)$ has a cycle of length $2k$. Also if $\mathcal H$ has lower rank $r$, then the degree of
each vertex in one of the parts of $G(\mathcal H)$ is at least $r$. In view of this we have studied bipartite graphs 
$G=(X,Y;E)$ with circumference at most $2k-2$ in which the degrees of all vertices in $X$ are at least $r$. One of the main results
(implying Theorem~\ref{EG_full}) is:

\begin{thm}\label{EGbgr} Let $k \geq 4, r \geq k+1$ and let $G=(X,Y;E)$ be a bipartite graph with $|X|=m$ and $|Y|=n$  such that 
$d(x)\geq r$ for every $x\in X$. Also suppose $G$ has no blocks isomorphic to $K_{k-1,r}$. If $c(G)<2k$, then
$m \leq  \frac{k-1}{r}(n-1)$. Moreover, if $m =  \frac{k-1}{r}(n-1)$, then every block of $G$ is a subgraph of $K_{k-1,r+1}$ and
every cut vertex is in $Y$. 
\end{thm}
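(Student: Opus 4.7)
The plan is to prove Theorem~\ref{EGbgr} by induction on the number of blocks of $G$, with the 2-connected case as the base. The base case will rest on a bipartite analogue of Theorem~\ref{EGblock}, which I expect is established as a separate technical lemma: for 2-connected bipartite $G$ with $d(x)\geq r$ on $X$ and $c(G)<2k$, one has $m \leq \max\{k-1,\frac{k}{2r-k+2}(n-1)\}$.

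For the base case, suppose $G$ is 2-connected and $G\not\cong K_{k-1,r}$. Using $r\geq k+1$ and $k\geq 4$, a direct computation shows $(k-2)(r-k+1)>0$, which rearranges to $\frac{k}{2r-k+2}<\frac{k-1}{r}$; so the second branch of the max is strictly below $\frac{k-1}{r}(n-1)$. For the first branch, $d(x)\geq r$ forces $n\geq r$. If $n\geq r+1$, then $\frac{k-1}{r}(n-1)\geq k-1\geq m$, with equality iff $m=k-1$ and $n=r+1$, forcing $G\subseteq K_{k-1,r+1}$. If $n=r$, every $x\in X$ is adjacent to all of $Y$, so $G=K_{m,r}$; then $c(G)=2\min(m,r)<2k$ combined with $G\not\cong K_{k-1,r}$ gives $m\leq k-2$, which satisfies $m<\frac{(k-1)(r-1)}{r}$ since $r\geq k+1$.

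For the inductive step, suppose $G$ has at least two blocks. Pick a leaf block $B$ with unique cut vertex $v$, and set $G_1:=G-(V(B)\setminus\{v\})$, which has one fewer block than $G$. If $v\in Y$, every $x\in X\cap V(B)$ keeps its full degree $\geq r$ in $B$, so the base case applies to $B$ and gives $m_B\leq \frac{k-1}{r}(n_B-1)$; writing $m=m_1+m_B$ and $n=n_1+n_B-1$ and invoking the inductive bound $m_1\leq \frac{k-1}{r}(n_1-1)$ on $G_1$ combines to $m\leq \frac{k-1}{r}(n-1)$. If $v\in X$, then $v$ may have $d_B(v)<r$ (since the rest of $d_G(v)$ lies in other blocks), and the required block bound becomes $m_B\leq 1+\frac{k-1}{r}n_B$. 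The main obstacle will be establishing this last inequality: the natural route is to prove a variant of the 2-connected lemma tolerating one exceptional low-degree $X$-vertex, handled by passing to $B-v$ (connected, with all remaining $X$-vertices still of degree $\geq r$) and further block-decomposing inside if necessary.

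The equality characterization will then follow by tracing the chain of inductive inequalities: $m=\frac{k-1}{r}(n-1)$ forces equality in each leaf block (so each block is a subgraph of $K_{k-1,r+1}$ by the base case) and forces every cut vertex to lie in $Y$, since a cut vertex in $X$ would introduce strict slack in the $v\in X$ branch of the inductive step. This recovers exactly the block-tree structure described in the conclusion of the theorem.
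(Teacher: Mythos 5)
Your strategy---block induction with a 2-connected base case via Theorem~\ref{EGbgr2}---is the same as the paper's route to Theorem~\ref{t1}, and your base case analysis plus the $v\in Y$ inductive step are correct. The serious gap is in the $v\in X$ branch, and it is not the one you flag. You notice that $d_B(v)$ can be $<r$ and propose a block bound tolerating one exceptional low-degree vertex, but the symmetric problem on the other side is ignored: $d_{G_1}(v)=d_G(v)-d_B(v)$ can also drop below $r$ (for instance $d_G(v)=r$ and $d_B(v)\ge 2$, the latter always holding because $B$ is 2-connected), so $G_1$ may violate the hypothesis $d(x)\ge r$ on $X$, and the inductive bound $m_1\le\frac{k-1}{r}(n_1-1)$ cannot be invoked. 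This is a genuine failure, not just a bookkeeping nuisance: $K_{k-2,r}$ with one extra $X$-vertex of degree two is 2-connected, has $c(G)<2k$, has no $K_{k-1,r}$ block, yet has $m=k-1>\frac{k-1}{r}(n-1)$, so the raw bound really does break as soon as one $X$-vertex escapes the degree constraint. Moreover, a lemma tolerating a single exceptional vertex does not close the recursion, since peeling several leaf blocks can leave several underdegreed cut vertices simultaneously.

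The paper avoids all this by inducting on a strictly stronger statement (Theorem~\ref{t1}): for a frame $(G,X^*)$ with $X^*\subseteq X$, one has $|X^*|\le\frac{k-1}{r}(n-1+D(G,X^*)+Q(G,X^*))$, where $D$ charges each vertex of $X^*$ for its shortfall below degree $r$ and $Q$ counts special $K_{k-1,r}$ blocks lying entirely inside $X^*$. In the $v\in X$ case the paper simply chooses $X^*_1$ to exclude the cut vertex, so $v$'s reduced degree contributes no deficiency, and bounds the block $B$ via Theorem~\ref{main}, which already admits an arbitrary subset $X^*\subseteq X$ with a matching deficiency correction. The $Q$ term is not optional either: block-decomposing $B-v$ as you propose can produce $K_{k-1,r}$ blocks even though $G$ has none, so the hypothesis ``no block $\cong K_{k-1,r}$'' does not propagate down your recursion. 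Without a strengthened inductive statement along these lines, the $v\in X$ case of your plan cannot be completed.
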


The heart of the proof is the following stronger bound for $2$-connected graphs.

\begin{thm}\label{EGbgr2} Let $k \geq 4, r \geq k+1$ and let $G=(X,Y;E)$ be a bipartite $2$-connected graph with $|X|=m$ and $|Y|=n$  such that $m\geq k$ and
$d(x)\geq r$ for every $x\in X$.  If $c(G)<2k$, then
$m \leq  \frac{k}{2r-k+2}(n-1)$. 
\end{thm}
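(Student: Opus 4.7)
The plan is to analyze the structure forced by taking a longest path $P$ in $G$, using both the degree condition $d(x) \geq r$ on $X$ and the circumference bound $c(G) < 2k$.

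First I would show that every longest path $P = v_0 v_1 \dots v_s$ has both endpoints in $Y$. Indeed, if $v_0 \in X$, then by maximality $N(v_0) \subseteq V(P)$; each neighbor of $v_0$ sits at some odd position $2j+1$ of $P$ and closes a cycle $v_0 v_1 \dots v_{2j+1} v_0$ of length $2j+2$, which must be at most $2k-2$, forcing $j \leq k-2$. Hence $v_0$ has at most $k-1$ neighbors on $P$, contradicting $d(v_0) \geq r \geq k+1$. Symmetrically $v_s \in Y$.

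Second, with both endpoints in $Y$, I would apply P\'osa-style rotation--extension from each endpoint to produce a set $S \subseteq Y$ of possible endpoints of longest paths on $V(P)$; by maximality, every $v \in S$ satisfies $N(v) \subseteq V(P)$. The circumference bound then forces the neighbors of each $v \in S$ to cluster near one end of the appropriate rotated path, since otherwise they would close a cycle of length at least $2k$. By $2$-connectivity, any vertex of $G$ lying outside $V(P)$ is joined to $V(P)$ by two internally disjoint paths, forming ears whose lengths are similarly constrained. This dichotomy leads either to $V(G) = V(P)$ (so the structure is entirely determined by the single path) or to a suitable ear that enables an inductive reduction to a smaller $2$-connected bipartite graph still satisfying the hypotheses.

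Third, I would translate the structural information into the advertised inequality. Since every $x \in X$ has degree at least $r$, we have $|E(G)| \geq rm$. The constraints accumulated in the previous step should yield a complementary upper bound of the form $|E(G)| \leq \frac{(k-2)m + k(n-1)}{2}$ on the number of edges. Comparing these two bounds gives $2rm \leq (k-2)m + k(n-1)$, which rearranges to the desired $m(2r-k+2) \leq k(n-1)$.

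The main obstacle is the second step: converting P\'osa's local information along one path into a global bound on $m$ for a $2$-connected graph. The rotation-extension technique naturally controls vertices visible from $P$, but one must carefully handle components of $G - V(P)$ and preserve $2$-connectivity under any inductive reduction. The one-sided nature of the degree condition is a further subtlety: removing a low-degree $Y$-vertex can drop its $X$-neighbors below degree $r$, so the reduction step requires delicate case analysis, probably splitting along the $2$-cuts of $G$ when the connectivity is exactly two and handling the remaining $3$-connected case separately via the rotation argument.
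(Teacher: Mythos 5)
Your first step is correct and matches the paper's Lemma~5.5: if $v_0 \in X$, its neighbors on a longest path $P$ must lie among $\{v_1, v_3, \ldots, v_{2k-3}\}$ by the circumference bound, so $d(v_0) \leq k-1 < r$, a contradiction. Your third step also correctly identifies the target: the edge bound $e(G) \leq \tfrac{1}{2}\bigl((k-2)m + k(n-1)\bigr)$ together with $e(G) \geq rm$ gives $m(2r-k+2) \leq k(n-1)$, as required. Indeed (for $k$ even) this edge bound is exactly the negation of the first inequality the paper establishes for a minimum counterexample to its Main Theorem.

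The gap is step~2, and it is the whole problem. You assert that rotation--extension together with constrained ears ``should yield'' the edge bound, but you do not derive it, and the difficulties you yourself list --- components of $G - V(P)$, preservation of $2$-connectivity under reduction, the one-sided degree condition --- are precisely where the paper invests most of its effort. The paper does not obtain the bound by counting edges along a single rotated path. It runs a one-sided disintegration of $G$ (deleting low-degree vertices from $X$ and $Y$ at different thresholds) to isolate a dense core; if the core vanishes the edge count closes immediately, if it is complete bipartite there is a lengthy separate analysis (Lemma~6.4), and otherwise a structural theorem (Theorem~4.8) forces a longest path between core vertices into a rigid ``saturated crossing formation'', which the Main Lemma (Lemma~5.1) then rules out for a minimum counterexample. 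Crucially, the minimality argument runs on the generalized quantity $n-1+D(G,X^*)$, where $D$ is a deficiency term charging $X$-vertices of degree below $r$; this device is what lets the reduction step survive the removal of a low-degree vertex, exactly the ``further subtlety'' you flag at the end. Your plan has no analogue of the disintegration, the crossing-formation structure theorem, or the deficiency bookkeeping, so the hard middle of the argument is missing.
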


%It turned out that in order to have a stronger induction assumption, we need to prove a somewhat more general statement.

In order to use induction on the number of blocks, we will prove a more general statement:
We will allow some vertices in $X$ to have degrees less than $r$ and assign them a {\em deficiency}.

% when some vertices in $X$
%are allowed to have degree less than $r$.
% Theorem~\ref{EGbgr}, we would like to apply induction to the number of blocks and use Theorem~\ref{EGbgr2}. It turns out that a problem occurs when there exists a cut vertex $x$ in $X$ such that when we split $G$ into multiple blocks, the degree of $x$ drops below $r$. In this case, we allow some vertices in $X$ to have degrees less than $r$ and assign them a {\em deficiency}. For this, we need the following definitions.

%\begin{definition}
 Let $G = (X, Y; E)$ be a bipartite graph, and $r$ be a positive integer. For a vertex $x \in X$,  the {\em deficiency} of $x$ is $D_G(x) := \max\{0, r - d_G(x)\}$. For a subset $X^* \subseteq X$, define the 
 {\em deficiency of $X^*$} as $D(G, X^*) := \sum_{x \in X^*} D_G(x)$. 
%\end{definition}

In these terms our more general theorem is as follows.

%Our goal will be eventually to prove the Main Theorem:
\begin{thm}[Main Theorem]\label{main} 
Let $ k\geq 4$, $r\geq k+1$  and $m,m^*,n$ be positive integers with $n\geq k$, $m \geq m^*\geq  k-1$ and $m\geq k$. 
Let $G=(X,Y;E)$ be a bipartite $2$-connected  graph with parts $X$ and $Y$, where $|X|=m$, $|Y|=n$, and
let $X^*\subseteq X$ with $|X^*|=m^*$.  If $c(G)<2k$, then 
\begin{equation}\label{e2}
 m^*\leq \frac{k}{2r-k+2}(n-1+D(G, X^*)).
\end{equation}
\end{thm}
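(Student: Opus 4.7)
The plan is to prove Theorem~\ref{main} by induction on $m+n$. The deficiency in the statement is tailored precisely for this induction: when one restricts $G$ to a subgraph, $X^*$-vertices lose edges and hence degrees, but the deficiency tracks exactly that loss and makes the quantity $n-1+D(G,X^*)$ behave additively across suitable decompositions. The base cases occur when $m+n$ takes its minimum value under the hypotheses, e.g. $m=k$ together with $n$ near its minimum, where (\ref{e2}) reduces to a finite check using the degree bound $d(x)\geq r-D_G(x)$ and the circumference constraint $c(G)<2k$.

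For the inductive step, I would distinguish two cases according to the connectivity of $G$. If $G$ has a $2$-vertex cut $\{u,v\}$, split $G$ along $\{u,v\}$ into $2$-connected pieces $G_1,\ldots,G_t$, each sharing only $\{u,v\}$ with the others. Applying the induction hypothesis to each $G_i$ with $X^*_i:=X^*\cap V(G_i)$ yields $m^*_i \leq \tfrac{k}{2r-k+2}(n_i-1+D(G_i,X^*_i))$, and one sums. The key arithmetic is that for each $x\in X^*$ lying in $G_i$, the ``missing'' edges of $x$ (those going to vertices of the other pieces) show up as an increase in $D_{G_i}(x)$ over $D_G(x)$; summed across pieces, these increases reconstruct the original deficiency plus the contribution of edges incident to $u,v$, while the shared counting of $u,v$ on the $Y$-side absorbs the surplus ``$-1$'' terms. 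If instead $G$ is $3$-connected, take a longest cycle $C$ of length $2\ell\leq 2k-2$; by Menger's theorem, any vertex outside $V(C)$ has three internally disjoint paths to $C$, and combining two such paths with a suitable arc of $C$ yields a cycle of length $\geq 2k$, contradicting $c(G)<2k$. Hence $V(G)=V(C)$, so $m\leq k-1$, which contradicts $m\geq k$; thus the $3$-connected case is vacuous and the induction closes.

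The main obstacle is the deficiency bookkeeping when splitting along a $2$-cut: the cut vertices $u,v$ may or may not lie in $X^*$, and must be counted precisely once across the decomposition to avoid double counting in both the $n-1$ term and the deficiency sum. A secondary technical point is to verify that each piece $G_i$ still satisfies the hypotheses of Theorem~\ref{main}, namely $|X\cap V(G_i)|\geq k$ and $|X^*\cap V(G_i)|\geq k-1$; pieces violating these conditions must be handled by a direct argument or absorbed into a neighboring piece with a correspondingly modified deficiency. Once this arithmetic is calibrated, the bound (\ref{e2}) follows directly from the additivity of the induction.
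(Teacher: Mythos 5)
Your Case B collapses, and the collapse is not repairable by small adjustments. The claim that a $3$-connected bipartite $G$ with $c(G)<2k$ must satisfy $V(G)=V(C)$ for a longest cycle $C$ is false. Take $G=K_{k-1,n}$ with $|Y|=k-1$ and $|X|=n\geq k$: since $k\geq 4$, this graph is $(k-1)$-connected, hence $3$-connected, has $c(G)=2(k-1)<2k$, and a longest cycle omits $n-(k-1)\geq 1$ vertices of $X$. For an omitted $x\in X$, the three internally disjoint paths to $C$ guaranteed by Menger are just three single edges into $Y\subset V(C)$, and replacing a length-$2$ arc $y_i\,x'\,y_j$ of $C$ with $y_i\,x\,y_j$ yields a cycle of the \emph{same} length; no cycle of length $\geq 2k$ is produced. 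So ``$3$-connected $\Rightarrow$ contradiction'' fails, and the inductive step does not close. Indeed, these dense near-complete-bipartite graphs are exactly the hard case: in the paper's proof, after establishing $2k$-saturation and applying a disintegration, the surviving core $G'$ being complete bipartite is Case~1, which needs Lemmas~\ref{c1}, \ref{pcover}, \ref{nl2} and a careful analysis of the possible sizes $(s,t)$ of the core; the non-complete core (Case~2) needs the entire crossing-formation machinery of Section~4 and the Main Lemma~\ref{mainlemma}.

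A secondary issue is that the $2$-cut bookkeeping you sketch does not produce the claimed additivity. If the cut vertices $u,v$ both lie in $Y$ and the split has $t$ pieces, then $\sum_i n_i = n + (t-1)\cdot 2$, so $\sum_i(n_i-1) = n - 1 + t(1)\,$, not $n-1$ — there is a surplus of $+1$ per extra piece on the $Y$-side alone, and it is not ``absorbed'' by the deficiency terms without a real argument. The paper never decomposes $2$-connected $G$ along a $2$-cut; it only decomposes non-$2$-connected graphs along cut vertices in the proof of Theorem~\ref{t1} (Section~8), where the arithmetic is done block by block and the deficiency/special-block corrections are explicitly tracked. The $2$-connected case (your Theorem~\ref{main}) is instead handled by an extremal argument (saturation), a degeneracy/disintegration argument to locate a dense core, and the structural theory of paths in saturated crossing formation — none of which appears in your outline. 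In short, the approach is not a variant of the paper's proof with calibration left to do; the central structural difficulty has been argued away incorrectly.
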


\section{Proof outline}
%We consider the incidence graph $G = G(\mathcal H)$ of a multi-hypergraph satisfying the hypothesis of Theorem~\ref{EG_full}. 
%For a multi-hypergraph $\mathcal H$ satisfying the hypothesis of Theorem~\ref{EG_full}, we construct an auxiliary bipartite graph $G = G(\mathcal H)$ as follows: $G = X \cup Y$ where $X = E(\mathcal H)$ and $Y = V(\mathcal H)$, and $xy \in e(G)$ if and only if the edge $x$ in $\mathcal H$ contains the vertex $y$.
%By definition, $\mathcal H$ contains a Berge-cycle of length $\ell$ if and only if $G$ contains a cycle of length $2\ell$. Thus we obtain that $G$ contains no cycle of length $2k$ or longer. By construction, since $\mathcal H$ has lower rank at least $r$, each $x \in X$ has $d_G(x) \geq r$.

%Instead of proving results for hypergraphs directly, we  work with bipartite graphs with bounded circumference. Moreover, the bulk of the proof will be theorems for 2-connected bipartite graphs with no long cycles, which we will then use to prove weaker bounds for general bipartite graphs using induction.

As we discussed in the previous section, our main theorem is on bipartite graphs with circumference at most $2k-2$, based on a stronger result for $2$-connected graphs.

In Section 4, we present a general  theorem on the structure of 2-connected bipartite graphs with no long cycles and the most edges. In particular, we show that for $3 \leq d \leq (k-1)/2$, each such graph that is neither $d$-degenerate nor ``too dense" contains a substructure that  we call a ``saturated crossing formation". In Section 5, we state the Main Theorem for 2-connected bipartite graphs that will be used to prove the inductive statement for general bipartite graphs. In Sections 5, 6, and 7 we show that if a graph is too sparse then it satisfies the Main Theorem, but if it is too dense, then it  contains a long cycle. So our graphs must contain a path in saturated crossing formation, but we also prove that any graph that contains such a path  satisfies our Main Theorem,  a contradiction. In Section 8, we prove a bound on the size of $X$ for general bipartite graphs, and in Section 9, we apply this bound to finally prove Theorem~\ref{EG_full} for hypergraphs.

\section{Structure of bipartite graphs without long cycles}

\begin{definition}Let $P = v_1, \ldots, v_p$ be a path with endpoints $x= v_1$ and $y = v_p$. For vertices $v_i, v_j$ of $P$, let $P[v_i, v_j] = v_i,v_{i+1}, \ldots, v_j$ if $i < j$, and $v_i, v_{i-1}, \ldots, v_j$ if $i > j$. 
\end{definition}

\begin{definition}Vertices $v_i$, $v_j$ in $P$ are called {\bf crossing neighbors} if $i < j$, $v_i \in N(y), v_j \in N(x)$, and for each $i < \ell < j$, $v_\ell \notin N(x) \cup N(y)$. The  edges $xv_j, yv_i$ are called {\bf crossing edges}.
\end{definition}

\begin{definition}For a set $S \subseteq V(P)$, define $S^+_P := \{v_{i+1}: v_i \in S\}$ and $S^-_P := \{v_{i-1}: v_i \in S\}$. When there is no ambiguity, we will simply write $S^+ = S^+_P$ and $S^- = S^-_P$. 
\end{definition}

%
%\begin{lem}\label{cycle}
%Let $G$ be a 2-connected bipartite graph, and let $P$ be an $(x,y)$-path. Then either
%\begin{enumerate}
%\item[(a)] $x$ and $y$ are in different partite sets of $G$ and there exists a cycle of length at least \[\min\{|V(P)|, 2(d_P(x) + d_P(y)-1)\},\] or
%\item[(b)] $x$ and $y$ are in the same partite set and there exists a cycle of length at least \[\min\{|V(P)| - 1, 2(d_P(x) + d_P(y) - 2)\}.\] 
%\end{enumerate}
%\end{lem}

\begin{lem}\label{cycle}
Let $G$ be a 2-connected bipartite graph, and let $P$ be an $(x,y)$-path. Then either
\begin{enumerate}
\item[(a)] $P$ contains no crossing neighbors and $G$ has a cycle of length at least $2(d_P(x) + d_P(y)-1)$, or
\item[(b)] $x$ and $y$ are in different partite sets of $G$ and there exists a cycle of length at least \[\min\{|V(P)|, 2(d_P(x) + d_P(y)-1)\}\] in $G$, or
\item[(c)] $x$ and $y$ are in the same partite set and there exists a cycle of length at least \[\min\{|V(P)| - 1, 2(d_P(x) + d_P(y) - 2)\}\] 
in $G$. 
Furthermore in all cases, we obtain a cycle that covers $N_P(x) \cup N_P(y)$. 
\end{enumerate}
\end{lem}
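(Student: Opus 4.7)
Write $P = v_1 \cdots v_p$ with $x = v_1$ and $y = v_p$, and let $a := d_P(x)$, $b := d_P(y)$. List $N_P(x) = \{v_{\alpha_1},\ldots,v_{\alpha_a}\}$ and $N_P(y) = \{v_{\beta_1},\ldots,v_{\beta_b}\}$ in increasing index order. The workhorse is the \emph{swap cycle}
\[
C(i,j) := x\, v_j\, v_{j+1} \cdots v_p\, v_i\, v_{i-1} \cdots v_1,
\]
defined whenever $i < j$ with $v_i \in N(y)$ and $v_j \in N(x)$. It has length $p - (j - i - 1)$ and uses exactly $V(P) \setminus \{v_{i+1}, \ldots, v_{j-1}\}$. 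When $(v_i, v_j)$ is a crossing pair, by the definition the omitted vertices lie outside $N_P(x) \cup N_P(y)$, so $C(i,j)$ automatically covers $N_P(x) \cup N_P(y)$.

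For cases (b) and (c), I assume a crossing pair exists and pick one minimizing $j - i$. In case (b), where $x$ and $y$ lie in opposite parts, $v_i$ and $v_j$ are in opposite parts as well, so $j - i$ is odd and may be as small as $1$, producing a cycle of length $|V(P)|$. In case (c), where $x$ and $y$ share a part, $v_i$ and $v_j$ share a part, forcing $j - i$ even and at least $2$, which accounts for the $-1$ in $|V(P)| - 1$ and, once the averaging argument below is pushed through, for the $-2$ in $2(a+b-2)$. To reach the $2(a+b-1)$ (resp.\ $2(a+b-2)$) branch of the $\min$, I use an averaging estimate: the $a+b$ neighbor-positions in the interior of $P$ partition it into blocks, and summing the block-sizes attached to the $Y \to X$ transitions bounds the minimum crossing-gap above by $p - 2(a+b-1)$ whenever $p \geq 2(a+b-1)$, which yields a cycle of the promised length.

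For case (a), the hypothesis that $P$ has no crossing pair forces the sorted sequence of $N_P(x) \cup N_P(y)$ along $P$ to contain no $Y \to X$ transition: the type pattern is ``$x$-only neighbors, then at most one common neighbor, then $y$-only neighbors''. If a common neighbor $v^* \in N(x) \cap N(y)$ lies on $P$, I splice the two local cycles $x\, v_{\alpha_1} \cdots v_{\alpha_a}\, x$ and $y\, v_{\beta_1} \cdots v_{\beta_b}\, y$ at $v^*$ into a single cycle covering $N_P(x) \cup N_P(y)$, and a direct count bounds its length below by $2(a+b-1)$. Otherwise $v_{\alpha_a} < v_{\beta_1}$ strictly, and I exploit the 2-connectivity of $G$ (via Menger's theorem) to find two internally vertex-disjoint paths joining the left side $\{v_{\alpha_1},\ldots,v_{\alpha_a}\}$ with the right side $\{v_{\beta_1},\ldots,v_{\beta_b}\}$, and combine them with the edges incident to $x$ and $y$ to produce a single cycle of length at least $2(a+b-1)$ covering $N_P(x) \cup N_P(y)$.

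The hardest step is case (a): the swap-cycle technique is unavailable, so the cycle must be assembled from 2-connectivity-supplied paths together with the local edges at $x$ and $y$, and hitting the length bound $2(a+b-1)$ with the correct parity in the bipartite setting requires careful bookkeeping. Cases (b) and (c) are comparatively mechanical once the swap cycle is in hand; the principal subtlety there is the gap-averaging estimate that produces the $2(a+b-1)$ branch when it is smaller than $|V(P)|$.
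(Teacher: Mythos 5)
Your handling of cases (b) and (c) via the swap cycle is essentially the paper's, although where you invoke a gap-averaging estimate the paper instead picks \emph{any} crossing pair and directly counts vertices of a fixed parity in the swap cycle (every $v_s\in N_P(x)\setminus\{v_j\}$ contributes $v_{s-1}$, plus every neighbor of $y$), which immediately gives $\geq a+b-1$ (resp.\ $a+b-2$) vertices of one color. Note also that your averaging claim as literally stated is false: the sum of the $Y\to X$ gap sizes is bounded above by $p-2(a+b)+2c$ where $c$ is the number of crossing pairs, which exceeds $p-2(a+b-1)$ as soon as $c\geq 2$; you must divide by $c$ and then use the parity of gaps to close the borderline cases. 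This is fixable, but the paper's counting is cleaner and avoids it entirely.

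The genuine gap is case (a). Your common-neighbor ``splice'' does not produce a cycle: the two cycles $x,v_2,\ldots,v^*,x$ and $y,v_{p-1},\ldots,v^*,y$ meet only at the cut vertex $v^*$, and since all of $x$'s neighbors lie weakly left of $v^*$ and all of $y$'s lie weakly right of $v^*$, there is no way to close a single cycle without extra paths supplied by 2-connectivity. Your fallback --- Menger giving two internally disjoint paths between the left and right neighbor sets --- is also insufficient: two such paths need not pass through (or even come near) most of $N_P(x)\cup N_P(y)$, so the resulting cycle neither obviously has length $\geq 2(a+b-1)$ nor covers $N_P(x)\cup N_P(y)$, which the lemma's ``Furthermore'' clause requires and which Theorem~4.9's proof uses heavily. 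What is actually needed here is Bondy's iterative construction (which the paper explicitly invokes): starting from $t_0=\max\{s:v_s\in N(x)\}$, repeatedly choose a path $P_r$ internally disjoint from $P$ that jumps from before $t_{r-1}$ to as far forward as possible, continue past the first $y$-neighbor, and then weave alternately along segments of $P$ and the $P_r$'s to form a single cycle. This construction is designed precisely so that the final cycle passes through every neighbor of $x$ and $y$ on $P$, and its length bound then follows from the parity count. Without this (or an equivalent) construction, case (a) is not established.
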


\begin{proof}
Suppose first that $P$ contains no crossing neighbors. Our proof is based off Bondy's theorem for general 2-connected graphs.

Let $P = v_1, \ldots, v_p$ where $v_1 = x, v_p = y$. Let $t_0 = \max\{s: v_s \in N(x)\}$ and $u = \min\{s:v_s \in N(y)\}$, thus $t_0 \leq u$. Iteratively construct paths $P_1, P_2, \ldots$ as follows: given $t_{r-1}$, find $s_r, t_r$ such that $s_r < t_{r-1} < t_r$ where $t_r$ is as large as possible, and $P_r$ is a path from $v_{s_r}$ to $v_{t_r}$ that is internally disjoint from $P$. It is always possible to find such a $P_r$ because $G$ is 2-connected. We stop at step $\ell$ at the first instance where $\ell > u$. Observe that for $r_1 < r_2$, paths $P_{r_1}$ and $P_{r_2}$ must be disjoint: if they share a vertex, then we would have chosen $P_{r_1}$ to end at vertex $v_{r_2}$, contradicting the maximality of $r_1$. Also, $s_{r+1} \geq t_{r-1}$, otherwise we would choose $P_{r+1}$ instead of $P_r$.

Now let $a= \min\{r: v_r \in N(x), r > s_1\}$, $b = \max\{r: v_r \in N(y), r < t_\ell\}$. 

If $\ell$ is odd, then we take the cycle
\[C_1:=P[x, v{s_1}] \cup P_1 \cup P[v_{t_1}, v_{s_3}] \cup P_3 \cup \ldots \cup P[v_{t_{\ell-2}}, v_{s_\ell}] \cup P_\ell \cup P[v{t_\ell}, y] \cup \]
\[ \cup yv_u \cup P[v_u, v_{t_{\ell - 1}}] \cup P_{\ell - 1} \cup \ldots \cup P[v_{s_4}, v_{t_2}] \cup P_2 \cup P[v_{s_2}, v_{t_0}] \cup v_{t_0}x.\]
And if $\ell$ is even, we take the cycle
\[C_2:=P[x, v_{s_1}] \cup P_1 \cup P[v_{t_1}, v_{s_3}] \cup P_3 \cup \ldots \cup P[v_{t_{\ell-3}}, v_{s_{\ell-1}}] \cup P_{\ell-1} \cup P[v_{t_{\ell-1}}, v_u] \cup \]
\[ \cup v_uy \cup P[y, v_{t_{\ell}}] \cup P_{\ell} \cup \ldots \cup P[v_{s_4}, v_{t_2}] \cup P_2 \cup P[v_{s_2},v_{t_0}] \cup v_{t_0}x.\]

\begin{center}
\includegraphics[scale=.5]{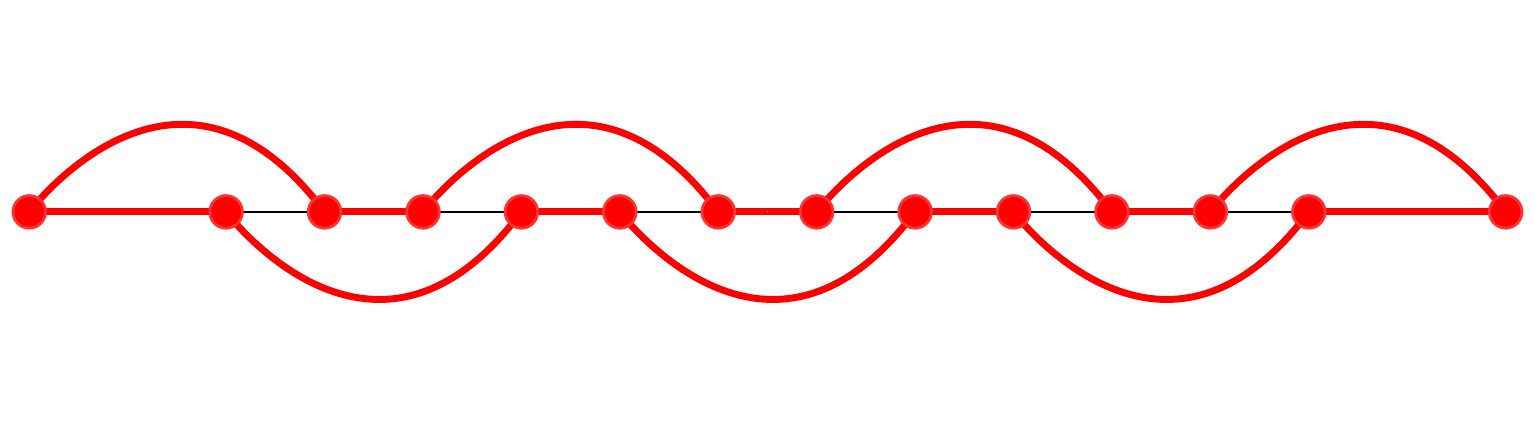}
\end{center}

Both cycles cover $N_P(x) \cup N_P(y)$. If $x$ and $y$ are the same parity, since $P$ contains no crossing neighbors, $|N_P(x) \cap N_P(y)|\leq 1$. Therefore $P$ contains at least $d_P(x) + d_P(y) - 1$ even vertices, which implies $|V(C)| \geq 2(d_P(x) + d_P(y) -1)$ because $G$ is bipartite. Otherwise, if $x$ and $y$ are different parities, then the neighbors of $x$ and the successors of  neighbors of $y$ are disjoint, of the same parity, and are contained in $C$. Thus $|V(C)| \geq 2(d_P(x) + d_P(y))$, as desired. 

Now suppose $G$ has crossing neighbors $v_i$ and $v_j$ with $j < i$ and $xv_j, yv_i \in E(G)$.  Let $C = P[x, v_i] \cup v_i y \cup P[y, v_j] \cup v_j x$. If $j =i+1$, then $C$ contains all vertices of $P$, as desired. If $j = i+2$, then $x$ and $y$ must be the same parity, and $C$ omits only vertex $v_{i+1}$. I.e., $|V(C)| = |V(P)|-1$.

Consider first the case where $x$ and $y$ are different parities and each pair of crossing neighbors has at least 2 vertices between them. For every neighbor $v_s$ of $x$ in $P - \{v_j\}$ (note $s$ is even), the odd vertex $v_{s-1}$ is in $C$ and is not a neighbor of $y$, otherwise $v_{s-1}$ and $v_s$ would form a pair of crossing neighbors.  Also, each neighbor of $y$ in $P$ is in $C$. Thus $C$ has at least $d_P(x) - 1 + d_P(y)$ odd vertices. That is, $|C| \geq 2(d_P(x) + d_P(y) - 1)$.

Now suppose $x$ and $y$ are the same parity and that crossing neighbors have at least 3 vertices between them. Let $C$ be as before.

%For each vertex $v_t \in N_P(x)$, $v_t \in V(C)$. For each vertex $v_t \in N_P(y) - \{v_i, v_{p-1}\}, v_{t+2} \in V(C)$ and $v_{t+2}$ is not in $N_P(x)$, since $v_t, v_{t+2}$ would form crossing neighbors with only one vertex between them. Because each vertex in $N_P(x) \cup N_P(y)$ must be even parity, we have at least $(k+1)/2 + (k+1)/2 - 2 = k-1$ even vertices in $C$. Therefore $|V(C)| \geq 2(d_P(x) + d_P(y) - 2)$ vertices, as desired. 

For any vertices $v_s \in N_P(x) - \{v_j\}$ and $v_t \in N_P(y) - \{v_i\}$, $v_{s-1}$ and $v_{t+1}$ are distinct and of the same parity (in this case, odd). Thus $C$ contains at least $d_P(x) -1 + d_P(y)-1$ odd vertices. It follows that $|C| \geq 2(d_P(x) + d_P(y) - 2)$, as desired.
\end{proof}

\begin{definition} Let $G$ be a 2-connected bipartite graph, and let $P$ be a path $v_1,\ldots,v_p$.
We say that  $P$ is in {\bf crossing formation} if there is a sequence of vertices $v_{i_0}, v_{i_1}, \ldots, v_{i_q}$ such that $v_i, v_{i'}$ are crossing neighbors if and only if $\{v_i, v_{i'}\} = \{v_{i_\ell}, v_{i_{\ell + 1}}\}$ for some $0 \leq \ell \leq q-1$. \end{definition}

\begin{definition} Let $G$ be a 2-connected bipartite graph, and let $P$ be an $x,y$-path $v_1,\ldots,v_p$.
Say that $P$ is in {\bf  saturated crossing formation} if 
\begin{enumerate}
\item $P$ is in crossing formation,
%\item for each even $i\leq i_1$, vertex   $v_{i}$ belongs to $N(v_1) \setminus N(v_p)$ and for each even $h\geq i_j$, vertex   $v_{h}$ belongs to $N(v_p) \setminus N(v_1)$,
\item $G[\{v_1, v_2, \ldots, v_{i_0}\} \cup \{v_{i_0}, \ldots, v_{i_q}\}]$ and $G[\{v_{i_q}, \ldots, v_p\}  \cup \{v_{i_0}, \ldots, v_{i_q}\}]$ are both complete bipartite,
\item if $P$ has more than one pair of crossing neighbors, then each pair has exactly 3 vertices between them,
\item $v_2$ has a neighbor $v_1'$ outside of $P$ such that $N(v'_1)=N(v_1)$, and
$v_{p-1}$ has a neighbor $v_p'$ outside of $P$ such that $N_P(v'_p)=N_P(v_p)$,
\item  for every even $h\leq i_0-2$ and every $u\in N_G(v_h)$, $N_G(u)\subseteq N_P(v_1)$,
similarly,
for every even $h\geq i_q+2$ and every $w\in N_G(v_h)$, $N_G(w)\subseteq N_P(v_p)$; 
in particular, for every odd $g\leq i_0-1$,  $N_G(v_g)\subseteq  V(P)$
and
for every odd $h\geq i_q+1$,  $N_G(v_h)\subseteq  V(P)$.

\end{enumerate}
\end{definition}

\begin{figure}[!ht]
\centering
\includegraphics[scale=.4]{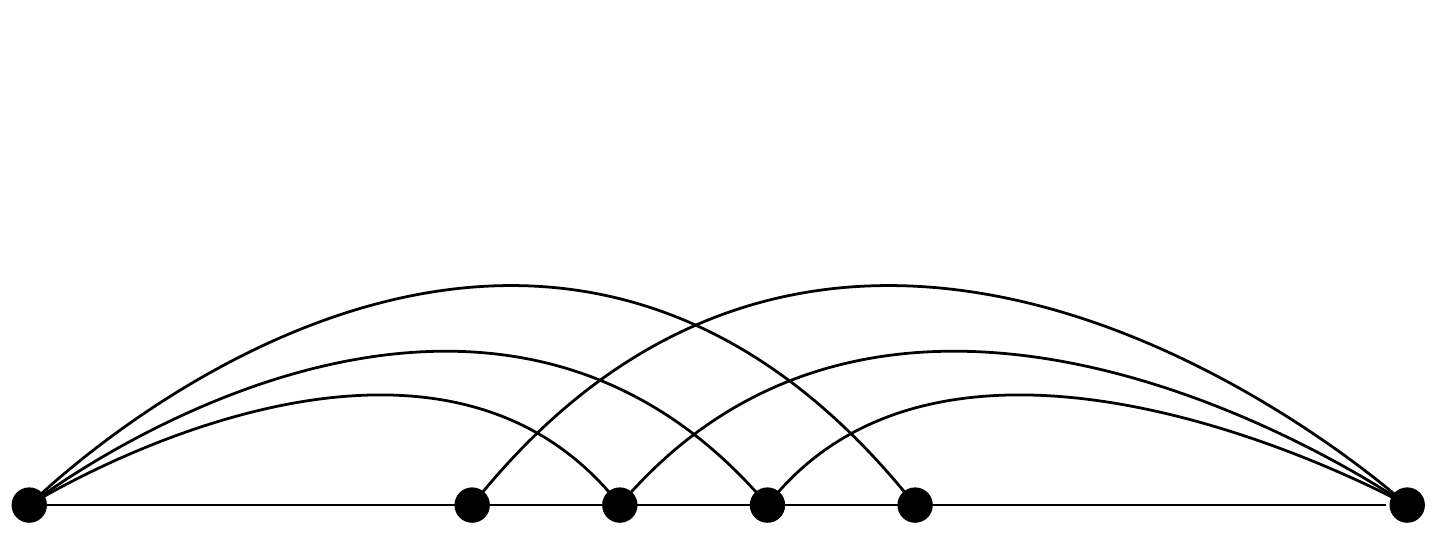}
\includegraphics[scale=.4]{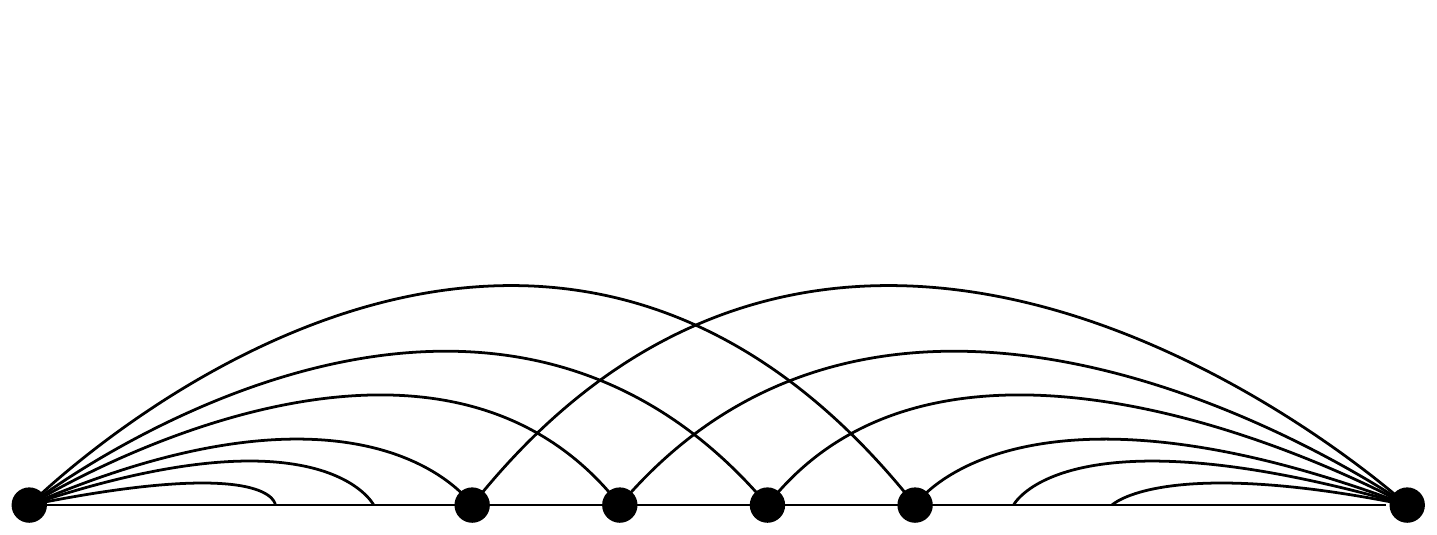}
\caption{A crossing formation, a saturated crossing formation}
\end{figure}

\begin{definition}
For a bipartite graph $G = A \cup B$, $\alpha \in \mathbb N$, and subsets $X^* \subseteq A$, $Y^* \subseteq B$, the {\bf $\alpha(X^*, Y^*)$-disintegration} of $G$ is the process of first deleting the vertices of $(A - X^*) \cup (B - Y^*)$ from $G$, then iteratively removing the remaining vertices of degree at most $\alpha$ until the resulting graph is either empty or has minimum degree at least $\alpha+1$. Let $G_\alpha(X^*, Y^*)$ denote the result of applying $\alpha(X^*, Y^*)$-disintegration to $G$. 

In the case where $X^* = A$ and $Y^* = B$, in literature, $G_\alpha(X^*, Y^*)$ is commonly referred to as the $(\alpha+1)$-core of $G$, that is, the unique maximum subgraph of $G$ with minimum degree at least $\alpha + 1$.
\end{definition}
Note that if $\alpha \geq (k-1)/2$, then $k-1 - \alpha \leq \alpha$, so $G_\alpha(X^*, Y^*) \subseteq G_{k-1-\alpha}(X^*, Y^*)$. 

\begin{definition}A bigraph $G=(X,Y;E)$
 is
$2k$-{\bf saturated} if $c(G)<2k$, but for each $x\in X$ and $y\in Y$ with $xy\notin E(G)$, the graph $G+xy$ has a cycle of length at least $2k$.\end{definition}

For example, if $s\leq k-1$ then for any $t$, the complete bipartite graph $K_{s,t}$ is $2k$-saturated, because it does not have
$x\in X$ and $y\in Y$ with $xy\notin E(G)$

\begin{thm}\label{cycle1}
Fix $k \geq 3$ odd and let $G$ be a 2-connected $2k$-saturated bipartite graph. For some $(k-1)/2 \leq \alpha \leq k-2$, and $(X^*, Y^*)$, suppose there are $x\in V(G_\alpha(X^*, Y^*))$ and $y\in V(G_{k-1-\alpha}(X^*, Y^*))$ that are nonadjacent to each other. Let $P=v_1,\ldots,v_p$ be a path with the following properties: (1) $P$ is a longest path in $G$ such that $v_1 \in G_\alpha(X^*, Y^*)$ and $v_p \in G_{k-1-\alpha}(X^*, Y^*)$, and (2) subject to the first condition, $\sum_{i=1}^p d_P(v_i)$ is maximized. Then $P$ is in a saturated crossing formation.
\end{thm}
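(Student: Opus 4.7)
The plan is to verify the five properties of saturated crossing formation in turn, exploiting three sources of optimality: $(a)$ $P$ is a longest path with $v_1\in G_\alpha(X^*,Y^*)$ and $v_p\in G_{k-1-\alpha}(X^*,Y^*)$, $(b)$ subject to $(a)$, $P$ maximizes $\sum_i d_P(v_i)$, and $(c)$ $G$ is $2k$-saturated with $c(G)<2k$. First I would show that $P$ has crossing neighbors. Since $v_1\in G_\alpha(X^*,Y^*)$, it has at least $\alpha+1$ neighbors in that core, and any such core-neighbor off $P$ would extend $P$ to a longer admissible path, contradicting $(a)$; hence $d_P(v_1)\geq \alpha+1$ and, symmetrically, $d_P(v_p)\geq k-\alpha$. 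If $P$ had no crossing neighbors, Lemma~\ref{cycle}(a) would yield a cycle of length at least $2(d_P(v_1)+d_P(v_p)-1)\geq 2k$, contradicting $(c)$. Sorted by position the crossing vertices form a sequence $v_{i_0},\ldots,v_{i_q}$, and because a crossing vertex strictly between two crossing neighbors would violate the definition, no two crossing pairs can interleave, so $P$ is automatically in crossing formation, giving property $(1)$.

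Next I would pin down the gap condition $(3)$. Assume $q\geq 1$ and take consecutive crossings $v_{i_\ell},v_{i_{\ell+1}}$ with $d=i_{\ell+1}-i_\ell$. Because $k$ is odd, $v_1$ and $v_p$ lie in the same partite set, and parity rules out odd $d$ at once. For even $d\neq 4$, I would close a cycle through the two crossing edges together with the $v_1$- and $v_p$-segments of $P$, combine it with Lemma~\ref{cycle} applied to the complementary segment of $P$, and use the endpoint-degree lower bounds to force either a cycle of length $\geq 2k$ (contradicting $(c)$) or a rerouted $(v_1,v_p)$-path $P'$ of the same length that exposes extra interior $v_1$- or $v_p$-neighbors and so satisfies $\sum_i d_{P'}(v_i)>\sum_i d_P(v_i)$ (contradicting $(b)$). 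The narrow window $d=4$ is what survives.

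Finally I would establish the end-structure properties $(2)$, $(4)$, $(5)$. For $(5)$, fix even $h\leq i_0-2$ and $u\in N_G(v_h)$; if $u$ had a neighbor $w\notin N_P(v_1)$, then by $(c)$ the graph $G+v_1w$ contains a cycle of length $\geq 2k$ using the new edge, and the segment of $P$ through $u$ and $v_h$ replaces $v_1w$ by an all-$G$ detour, producing such a cycle already in $G$, a contradiction. The symmetric case handles the $v_p$-end, and iterating the exchange forces the two induced subgraphs in $(2)$ to be complete bipartite. For $(4)$, 2-connectedness supplies a second internally disjoint $v_1$-$v_2$ path; its first off-$P$ vertex $v_1'$ must satisfy $N(v_1')=N(v_1)$, else swapping $v_1$ for $v_1'$ yields a longer admissible path or one of larger degree sum, contradicting $(a)$ or $(b)$; the analogous statement at the other end yields $v_p'$.

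The principal obstacle is the case analysis in the middle paragraph: every even gap $d\neq 4$ must be eliminated while staying within circumference strictly less than $2k$ and path length at most $|V(P)|$, and after each rerouting one must track globally how the $P$-degrees of interior vertices shift. Bipartiteness of $G$ and the oddness of $k$ are what force the alternation of ``$v_1$-neighbor'' and ``$v_p$-neighbor'' labels along the crossing sequence and rule out odd gaps at the outset; without these parity constraints too many configurations would survive the elementary degree bounds, and the exchange arguments in the end-structure paragraph would not close.
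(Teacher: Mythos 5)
There are two substantive gaps, and the overall dependency structure of the proof is wrong.

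\textbf{Part (1) is not automatic.} You claim that because crossing pairs cannot interleave, $P$ is ``automatically in crossing formation.'' This confuses non-interleaving with the chain property. The definition of crossing formation demands that the crossing pairs form a \emph{consecutive chain} $v_{i_0},v_{i_1},\ldots,v_{i_q}$, i.e.\ each $v_{i_\ell}$ participates in the pair $(v_{i_\ell}, v_{i_{\ell+1}})$ and also in $(v_{i_{\ell-1}}, v_{i_\ell})$. A priori there could be two crossing pairs $(v_a,v_b)$ and $(v_c,v_d)$ with $b<c$ but $(v_b,v_c)$ not a crossing pair, leaving a ``dead'' stretch between them. Ruling this out is genuinely nontrivial: the paper first produces the auxiliary vertices $v_1'$ and $v_p'$ (your Part (4)), and only then derives the alternation facts (equations (\ref{m9}), (\ref{m10}) in the paper) needed to show there are no such dead stretches. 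So Part (4) must be established \emph{before} Part (1), not after, and your proposed ordering cannot be repaired without this inversion.

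\textbf{The saturation argument for Part (5) does not close.} You argue: if $u\in N(v_h)$ had a neighbor $w\notin N_P(v_1)$, then $G+v_1w$ has a long cycle, and you reroute that cycle through $u$ and $v_h$ to get a long cycle in $G$ itself. Two problems. First, $u$ need not lie on $P$ at all --- the paper must handle $u\notin V(P)\cup\{v_1'\}$ as a separate case, requiring a 2-connectedness path argument, not a simple reroute. Second, even when the reroute is available, swapping the single edge $v_1w$ for a path through $v_h$ and $u$ changes the cycle length by an amount you do not control, so ``$\geq 2k$'' is not preserved. The paper instead proves Part (5) by a cascade of claims (its (\ref{m12}), (\ref{m12'}), (\ref{m11})) that exploit condition III on the cycle $C$ (every odd vertex of $C$ lies in $N_P(v_1)^-\cup N_P(v_p)^+$), together with the previously established $v_1'$, to show that any stray neighbor $w$ would yield either a longer admissible path or a cycle contradicting $c(G)<2k$. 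This is a direct structural argument, not a saturation-plus-reroute argument.

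Smaller points: Part (3) falls out immediately from condition II ($|V(C)|=2(d_P(v_1)+d_P(v_p)-2)$) and condition III, with no parity case analysis needed; your proposed enumeration over even gaps $d\neq 4$ is a detour and would be painful to carry out. For Part (4), ``2-connectedness supplies a second $v_1$-$v_2$ path'' does not by itself produce a \emph{single} vertex $v_1'$ off $P$ with $N_P(v_1')=N_P(v_1)$; the paper's argument first shows that $v_2\in G_\alpha$ has a neighbor in $G_\alpha$ off $P$ by counting (there are $\geq\alpha+1$ such neighbors but, by III, at most $\alpha$ candidate slots on $P$), and only then compares the path $P'=v_1'v_2\cup P[v_2,v_p]$ against $P$. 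The degree counting and the use of III are essential and are missing from your sketch.
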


\begin{proof}For simplicity, denote $G_\alpha = G_\alpha(X^*, Y^*)$ and $G_{k-1-\alpha} = G_{k-1-\alpha}(X^*, Y^*)$. 

Because $G$ is saturated and $xy\notin E(G)$, $G$ contains an $(x,y)$-path with at least $2k$ vertices. Thus $p\geq 2k$.
 By the maximality of $P$, all neighbors of $v_1$  in $G_\alpha$ and all neighbors of $v_p$  in $G_{k-1-\alpha}$
  are in $P$. Thus 
 \begin{equation}\label{m2}
\mbox{
  $d_P(v_1) \geq \alpha+1$ and $d_P(v_p) \geq k-1- \alpha+1$. }
\end{equation}
  
  By Lemma~\ref{cycle}, $G$ contains a cycle of length at least $2(d_P(x) + d_P(y) - 2) \geq 2((\alpha + 1) + (k - 1 - \alpha + 1) - 2) = 2(k-1)$. But $c(G) \leq 2k-2$, so $P$ satisfies neither (a) nor (b) in Lemma~\ref{cycle}. In particular, $p$ is odd (so $p \geq 2k+1$), and $G$ has crossing neighbors. Let $v_i,v_j$ be a pair of crossing neighbors such that $v_i - v_j$ is minimized. Examining the proof of Lemma~\ref{cycle}, each pair of crossing neighbors in $P$ has at least 3 vertices between them. Furthermore, we obtain a cycle $C = P[v_1, v_i] \cup v_i v_p \cup P[v_p, v_j] \cup v_j v_1$ such that 
\begin{itemize}
\item[I.] $V(C) =  V(P) - \{v_{i+1}, \ldots, v_{j-1}\}$,
\item[II.] $|V(C)| =  2(d_P(x) + d_P(y) - 2)$, and
\item[III.] each odd vertex in $C$ belongs to $N_P(v_1)^- \cup N_P(v_p)^+$ with $N_P(v_1)^- \cap N_P(v_p)^+ = \emptyset$.
\end{itemize}
In particular, since $C$ misses only  vertices between one pair of crossing neighbors, if $C$ contains more than one pair of crossing neighbors, then each pair only contains 3 vertices between them, otherwise condition III. is violated.  Thus Part 3 in the definition of saturated crossing formation holds.

%For a set $S\subseteq V(P)$, let $S_P^+$ (respectively, $S_P^-$) denote the set of successors (respectively, predecessors) of the vertices in $S$.
%By the construction of $C$, $(N_P(v_1))^-_P-v_{j-1}\subset V(C)$ and $(N_P(v_p))^+_P-v_{i+1}\subset V(C)$.
%If some $v_h$ is in $(N_P(v_1))^-_P\cap (N_P(v_p))^+_P$, then the cycle
%$v_1v_{h+1}\cup P[v_{h+1},v_p]\cup v_pv_{h-1}\cup P[v_1,v_{h-1}]$ is longer than $C$, a contradiction. Thus
%\begin{equation}\label{m5}
%(N_P(v_1))^-_P\cap (N_P(v_p))^+_P=\emptyset,
%\end{equation}
%and hence~\eqref{m2} implies that
% \begin{equation}\label{m3}
%\mbox{each odd vertex in $C$ is in $(N_P(v_1))^-_P\cup (N_P(v_p))^+_P  $.}
%%d_P(v_1) \geq \alpha+1$ and $d_P(v_p) \geq k-1- \alpha+1$. }
%\end{equation}
%The predecessors of all neighbors of $v_1$ in $P$ apart from $v_{j-1}$ are in $C$. Similarly, $C$ contains at least $k-1-\alpha$
%successors of the neighbors of $v_p$ in $P$. Since no
%%Cycle $C$ has $k-1$ odd vertices. Exactly 
%Since $v_2v_p\notin E(G)$ (because otherwise $P[v_2, v_p] \cup v_2v_p$ is a cycle with at least $2k$ vertices), $v_3\notin N_P(v_p)^+ $, and hence 
%$v_3\in N_P(v_1)^- $. This means
%\begin{equation}\label{m6}
%\mbox{
%$v_1v_4\in E(G)$.  Similarly, $v_pv_{p-3}\in E(G)$. }
%\end{equation}

First we show that 
\begin{equation}\label{m1}
\mbox{
$v_2$ has a neighbor $v_1' \in G_\alpha$ outside of $P$. }
\end{equation}

By Lemma~\ref{cycle}, $v_1$ has exactly $\alpha+1$ neighbors in $P$. So by the maximality of $P$, each  of these neighbors must be in $G_\alpha$. 
In particular, $v_2 \in V(G_\alpha)$, and so it has at least $\alpha + 1$ neighbors in $G_\alpha$ as well. Suppose  that all of its neighbors 
in $G_{\alpha}$ are in $P$.

If $v_2$ has a neighbor $v_t \in N(v_p)^+$, then $P[v_2,v_{t-1}] \cup v_{t-1}v_p \cup P[v_p, v_t] \cup v_t v_2$ is a cycle of length at $|V(P)|-1 \geq 2k$, a contradiction. So $N(v_2)\cap N(v_p)^+=\emptyset$. % in $H$ can be a successor of a neighbor of $v_p$. 
Hence by fact III, $N_{G_{\alpha}}(v_2)\subset N_P(v_1)^-\cup V(P-C)$. Since
$v_{j-1}\in N_{G_{\alpha}}(v_1)^-$ but $v_{j-1} \notin V(C)$, we have 
$|N_P(v_1)^- \setminus V(P-C)|\leq \alpha$. Thus $v_2$ has a neighbor $v_h\in V(P-C)$. Furthermore, because $v_h \notin N(v_p)^+$, $i+3\leq h\leq j-1$.

%We claim that $x$ and $y$ share at least 2 neighbors. First, let $v_\ell$ be the first neighbor of $y$ that appears in $P$. If $v_{\ell} \notin N(x)$, then by choice of $v_{\ell}$, $v_{\ell-2} \notin N(y)$, violating III. Similarly, if $v_\ell$ is the last neighbor of $v_1$ in $P$, then $v_{\ell + 2} \notin N(x)$, so $v_{\ell} \in N(y)$. 

%Thus $x$ has at most $(\alpha +1) - 2$ neighbors that are not neighbors of $y$. So because $|N_P(v_2)| \geq \alpha+1$, $v_2$ has at least one neighbor $v_{2'}$ that is neither $x$ nor a successor of a neighbor of $x$. Then $v_{2'}$ must lie between a pair of crossing neighbors $v_s, v_t$ (not necessarily $v_i$,$v_j$ but $i \leq s, j \leq t$). 
Let $v_\ell$ be the first neighbor of $v_p$ that appears in $P$ (so $v_{\ell - 2} \in N(v_1)$ by III). 
Then the cycle
\[C' := v_2v_{h} \cup P[v_{h}, v_\ell] \cup v_\ell v_p \cup P[v_p, v_j] \cup v_j v_1 \cup v_1 v_{\ell - 2} \cup P[v_{\ell - 2}, v_2]\]
has at least $|V(C)| + 2$ vertices, a contradiction. This proves~\eqref{m1}.

%Therefore $v_2$ has a neighbor $x'$ in $G_\alpha$ outside of $P$. 
\smallskip
Consider the path $P' = v'_1v_2 \cup P[v_2, v_p]$.  By definition, $|V(P')| = |V(P)|$. By the maximality of $P$,
% as a longest path with nonadjacent endpoints in $H$,
 each neighbor of $v'_1$ in $G_\alpha$ must also lie in $P'$, and $v'_1$ must have $\alpha + 1$ such neighbors, otherwise we would apply Lemma~\ref{cycle} to get a longer cycle in $G$. 

Suppose $v'_1$ is adjacent to $v_h$ for some $i+1\leq h\leq j-1$. 
% By~\eqref{m6}, $v_1v_4\in E(G)$.
Then the cycle $P'[v_1', v_i] \cup v_iv_p \cup P'[v_p, v_h] \cup v_h v_1'$ is longer than $C$, a contradiction.
Thus $N(v'_1)\cap P\subseteq V(C)$. Then the analog of fact III for $P'$ yields
 \begin{equation}\label{m7}
N_P(v'_1)  =N_P(v_1).
%d_P(v_1) \geq \alpha+1$ and $d_P(v_p) \geq k-1- \alpha+1$. }
\end{equation}
By a symmetric argument, we get the analog of~\eqref{m1} and~\eqref{m7}:
\begin{equation}\label{m8}
\mbox{
$v_{p-1}$ has a neighbor $v_{p}'$ outside of $P$ such that    $N_P(v'_p)  =N_P(v_{p})$. }
\end{equation}
This shows that Part 4 of the definition of saturated crossing formation holds.

%First suppose $v_j \notin N(x')$. Then because $P'$ must satisfy condition (c) of Lemma\ref{cycle}, we can find another pair of cross neighbors $v_{i'}, v_{j'}$ and a cycle $C'$ such that $V(C') = V(P) - \{v_{i'+1}, \ldots, v_{j'-1}\}$ and $|V(C')| = 2(k-1) = |V(C)|$. In particular, $v_j \in V(C')$. But $v_j \notin N(x')$ and $v_{j-2} \notin N(y)$, violating condition III. (applied to $P'$). 

%Therefore $v_i$ and $v_j$ are also a pair of crossing neighbors in $P'$, we may construct $C'$ such that it only avoids vertices between this pair, and I., II., and III. apply to $C'$. To satisfy condition III., all even vertices $v_{t+2}$ such that $v_t \notin N(y) - \{v_i, v_{p-1}\}$ must be neighbors of $x'$. But these vertices are also neighbors of $x$. So we obtain that $N_P(x) = N_{P'}(x') = N_P(x')$. Similarly, $v_{p-1}$ has a neighbor $y' \in G_{k-1-\alpha}$ outside of $P$ such that $N_P(y) = N_P(y')$. 

Again, let $v_\ell$ be the first neighbor of $v_p$ in $P$. We claim that
\begin{equation}\label{m9}
\mbox{for each even $h\geq \ell$, either $v_1v_h\notin E(G)$ or $v_1v_{h+2}\notin E(G)$.}
\end{equation}
Indeed, suppose $h\geq \ell$, $v_1v_h\in E(G)$ and $v_1v_{h+2}\in E(G)$. Then by~\eqref{m7}, $v'_1v_h\in E(G)$ and by the definition of $\ell$,
$v_1v_{\ell-2}\in E(G)$.  Then the cycle
\[C'' := v'_1v_{h} \cup P[v_{h}, v_{\ell}] \cup v_{\ell}v_p \cup P[v_{p}, v_{h+2}] \cup v_{h+2} v_1 \cup v_1 v_{\ell-2} \cup P[v_{\ell- 2}, v_{2}] \cup v_{2}v_1'\]
avoids only the vertices $v_{\ell - 1}$ and $v_{h + 1}$ in $P$ and includes  $v_1'\notin P$. Thus $|V(C'')| \geq  2k$, a contradiction.

Similarly , if $v_{\ell'}$ is the last neighbor of $v_1$ in $P$, then
\begin{equation}\label{m10}
\mbox{for each even $h\leq \ell'$, either $v_pv_h\notin E(G)$ or $v_pv_{h-2}\notin E(G)$.}
\end{equation}
%Thus $y$ cannot have any neighbors in $P$ before $v_i$. Similarly, one can show that $x$ cannot have any neighbors after $v_j$ that are not also contained in a pair of crossing neighbors. 
Together,~\eqref{m9} and~\eqref{m10} imply   Part 1 of the definition of the saturated crossing formation holds, i.e.
there is a sequence of vertices $v_{i_0}, v_{i_1},, \ldots, v_{i_q}$ with $i_0=i$ and $i_1=j$ such that $v_r, v_{r'}$ are crossing neighbors if and only if $\{v_r, v_{r'}\} = \{v_{i_\ell}, v_{i_{\ell + 1}}\}$ for some $0 \leq \ell \leq q-1$. To see this, suppose there exists two pairs of crossing neighbors, $\{v_{a_1}, v_{b_1}\}$ and $\{v_{a_2}, v_{b_2}\}$ such that there are no other pairs of crossing neighbors between them, and $b_1 < a_2$. Then $a_2 \geq b_1 + 2$. Then by \eqref{m9}, $v_{b_1 + 2}v_1 \notin E(G)$. If $v_{b_1}v_p \notin E(G)$, then the vertex $v_{b_1+1}$ violates fact III. Otherwise, if $v_{b_1}v_p \in E(G)$, then because there are no crossing pairs between $\{v_{a_1}, v_{b_2}\}$ and $\{v_{a_2}, v_{b_2}\}$, for each $b_1< c \leq a_2$, $v_cv_1 \notin E(G)$. By condition III, this means each even vertex $v_c$ between $v_{b_1}$ and $v_{a_2}$ belong to $N(v_p)$, contradicting \eqref{m10}. 

Therefore we have proved that $P$ is in crossing formation (Part 1 of the definition of saturated crossing formation). Let $v_{i_0}, \ldots, v_{i_q}$ be the set of crossing neighbors. By fact III,

\begin{equation}\label{m10'}
\mbox{for each even $s \leq i_0$, $v_1v_s \in E(G)$, and for each even $t \geq i_q$, $v_pv_t \in E(G)$. }
\end{equation}

Next we will prove Part 5 in $3$ steps. Our first step is to prove:
\begin{equation}\label{m12}
\mbox{For each odd $1\leq h<i_0$, $N(v_h)\subseteq N_P(v_1)$. In particular, $N(v_1)=N_P(v_1)$.}
\end{equation}
Indeed, suppose for some odd $1\leq h<i_0$, vertex $v_h$ has a neighbor $w\notin \{v_2,v_4,\ldots,v_{i_0-2}\}\cup \{v_{i_0},\ldots,v_{i_q}\}$.
Since $G$ is $2$-connected, $G-v_h$ contains a path $Q=w_1,\ldots,w_s$ from $w=w_1$ to $P-v_h+v'_1$ (possibly, $s=1$ if $w \in P$) that is internally disjoint from $P$. 
If $w_s=v'_1$, then the path $$P_1=Q^{-1}\cup wv_h\cup P[v_1,v_h]\cup v_1v_{h+1}\cup P[v_{h+1},v_p]$$
starts from $v'_1\in G_\alpha$ and is longer than $P$, a contradiction.
Suppose now that
 $w_s=v_g$. Then $v'_1\notin V(Q)$.
If $g>i_q$, then the cycle $$C_1=v_hw\cup Q\cup P[v_g,v_p]\cup v_pv_{2\lfloor (g-1)/2\rfloor}\cup P[v_{2\lfloor (g-1)/2\rfloor},v_{h+1}]
\cup v_{h+1}v_1\cup P[v_1,v_h]$$ has at least $2k$ vertices, a contradiction. If $i_j<g\leq i_{j+1}$ for some $1\leq j<q$, then the cycle 
$$C_2=v_hw\cup Q\cup P[v_g,v_p]\cup v_pv_{i_j}\cup P[v_{h+1},v_{i_j}]
\cup v_{h+1}v_1\cup P[v_1,v_h]$$ 
is longer than $C$, unless $g=i_{j+1}$ and $s=1$. But $g=i_{j+1}$ and $s=1$ means $w=v_{i_{j+1}}$, contradicting the fact that $w\notin N_P(v_1)$.
So suppose $1\leq g\leq i_0$. If $|g-h|=1$ then $ s\geq 2$: if $s = 1$ then $Q = w = i_g \in N_P(v_1)$, a contradiction. But if $s \geq 2$, then replacing edge $v_hv_g$ in $P$ with $Q$, we obtain a longer $(v_1,v_p)$-path. So let $|g-h|\geq 2$.
Since $v'_1\notin V(Q)$, if $g>h$, then the path
$$P_1=P[v_1,v_{h-1}]\cup v_{h-1},v'_1\cup v'_1v_{2\lfloor (g-1)/2\rfloor}\cup P[v_{2\lfloor (g-1)/2\rfloor},v_h]\cup v_hw
\cup Q\cup P[v_g,v_p]$$
 has the same ends as $P$, but is longer than $P$, contradicting the choice of $P$. Similarly, if $g<h$, then the path
$$P_2=P[v_1,v_{2\lfloor (g-1)/2\rfloor}]\cup v_{2\lfloor (g-1)/2\rfloor},v'_1\cup v'_1v_{h-1}\cup P[v_g,v_{h-1}]\cup Q
\cup wv_h\cup P[v_h,v_p]$$
 has the same ends as $P$, but is longer than $P$. This proves~\eqref{m12}.
 
By the symmetry between $v_1$ and $v'_1$, the same proof implies
\begin{equation}\label{m12'}
 N(v'_1)=N_P(v_1).
\end{equation}

Now let $h<i_0$ be even.  %The first step towards proving Part 5 is the following claim.
\begin{equation}\label{m11}
\mbox{For any $g\in \{i_0,\ldots,p\}\setminus \{i_0,\ldots,i_q\}$, there is no $v_h,v_g$-path internally disjoint from $P$.}
\end{equation}
Indeed, suppose such a path $Q=w_1,w_2,\ldots,w_s$ exists with $w_1=v_h$ and $w_s=v_g$.
If $i_{j}<g<i_{j+1}$ for some $1\leq j<q$, then the cycle
$$C_3=Q\cup P[v_g,v_p]\cup v_pv_{i_j}\cup P[v_{h+2},v_{i_j}]\cup v_{h+2}v_1\cup P[v_1,v_h]$$ is
longer than $C$ unless $s=2$ and $g=i_{j+1}-1$. In this case, by~\eqref{m7}, the cycle
$$v_hv_g\cup P[v_g,v_{i_0}]\cup v_{i_0}v_p\cup P[v_p,v_{g+1}]\cup v_{g+1}v_1 \cup P[v_1,v_{h-2}]\cup v_{h-2}v'_1\cup v'_1v_{i_0-2}
\cup P[v_h,v_{i_0-2}]$$
has at least $2k$ vertices, a contradiction. So, suppose $g>i_q$. Then the cycle
$$C_4=Q\cup P[v_g,v_p]\cup v_pv_{2\lfloor (g-1)/2\rfloor}\cup P[v_{2\lfloor (g-1)/2\rfloor},v_{h+2}]\cup v_{h+2}v_1\cup
P[v_1,v_h]$$
has more than $2k-2$ vertices, a contradiction. This proves~\eqref{m11}.

To finish the proof of Part 5 by contradiction, suppose that for some even $h\leq i_0-2$, vertex $v_h$ has a neighbor 
$u$ that has a neighbor $w\notin N_P(v_1)$. By~\eqref{m12},~\eqref{m12'} and~\eqref{m11}, 
 $u\notin V(P)+v'_1$.
Since $u$ is in the same partite set of $G$ as $v_1$,~\eqref{m11} implies that $w\notin V(P)$. Since $G$ is $2$-connected, $G-u$
has a path $Q$ connecting $w$ with $V(P)+v'_1$ internally disjoint from $P+v_1'$. Let $Q=w_1,\ldots,w_s$, where $w_1=w$ and
either $w_s=v'_1$ or $w_s=v_\ell\in V(P)$. By~\eqref{m12} and~\eqref{m12'}, 
$w_s\notin \{v_1,v_3,\ldots,v_{i_0-1},v'_1\}$. %, then the path $P'=v'_1Q_w\cup wu\cup uv_h\cup P[v_1,v_h]\cup v_1v_{h+2}
%\cup P[v_{h+2},v_p]$ starts from $v'_1\in V(G_\alpha)$, ends with the same $v_p$, and is longer than $P$, a contradiction. 
So, in view of~\eqref{m11},
$w_s=v_\ell\in V(P)$, where $\ell\in \{2,4,\ldots,i_0-2\}\cup \{i_0,\ldots,i_q\}$. If $\ell\in \cup \{i_1,\ldots,i_q\}$, say $\ell = i_j$ then the cycle
$$C_5=v_hu\cup uw\cup Q\cup P[v_{i_j},v_p]\cup v_pv_{i_{j-1}}\cup P[v_{i_{j-1}},v_{h+2}]\cup v_{h+2}v_1\cup P[v_1,v_h]$$ is longer than $C$.
The last possibility is that $1\leq g\leq i_0$. Since $G$ is $2$-connected, we may assume that $g\neq h$ (indeed, if $g=h$, then
$G-v_h$ has a path from $V(Q)-v_g+u$ to $V(P)+v_1'$ which together with a part of $Q$ can play the role of $Q$). For definiteness, suppose
$g>h$ (the case of $g<h$ works the same way with the roles of $v_h$ and $v_g$ switched). If $h<g\leq h+2$, then the path
$P[v_1,v_h]\cup v_hu\cup uw \cup Q\cup P[v_g,v_p]$ has the same ends as $P$, but is longer. Let $g\geq h+3$.
Then the path 
$$v_1v_{2\lfloor (g-1)/2\rfloor}\cup P[v_{2\lfloor (g-1)/2\rfloor},v_{2\lceil (h+1)/2\rceil}]\cup v_{2\lceil (h+1)/2\rceil}v'_1\cup v'_1v_2
\cup P[v_2,v_h]\cup v_hu\cup uw\cup Q\cup P[v_g,v_p]$$
has the same ends as $P$, but is longer, a contradiction. Similarly, we obtain the symmetric part of Part 5.

Finally, we will show Part 2 of the definition of saturated crossing formation. Suppose there exists some odd $h \leq i_0 - 1$ such that for some $s \in \{i_0, \ldots, i_q\} \cup  \{2, 4, \ldots, i_0-2\}$, $v_hv_s \notin E(G)$. By Part 5, $N(v_h) \subset N(v_1)=N(v_1')$. Also, $v_{h-1} v_1', v_{h+1}v_1' \in E(G)$ (by \eqref{m10'}), so we can replace $v_h$ in $P$ with $v_1'$ to obtain a new path such that $\sum_{i=1}^p d_P(v_i) < \sum_{i=1}^p d_P(v_i) - d_P(v_h) + d_{P}(v_1')$, a contradiction. Together with the symmetric argument for the other side of $P$, we have shown that Part 2 of the definition of saturated crossing formation holds.
\end{proof}

Let $P$ be a path satisfying the conditions of Theorem~\ref{cycle1}. For simplicity, we denote \[P = L \cup H_1 \cup ... \cup H_q \cup R\] where $L = P[v_1, v_{i_0}]$, and $R = P[v_{i_q}, v_p]$, $V(H_1) \cap V(L) = \{v_{i_0}\}, V(H_t) \cap V(H_{t+1})= \{v_{i_t}\}$ for all $1\leq t \leq q-1$, and $V(H_{q}) \cap R = \{v_{i_q}\}$.  Let $H := H_1 \cup \ldots \cup H_q$.

\begin{lem}\label{sep}Let $P$ satisfy the conditions of Theorem~\ref{cycle1}. Let $I:= \{v_{i_0}, \ldots, v_{i_q}\}$, $L^o := L - I, R^o:= R - I, H^o := H - I$. Then $I$ separates $L^o, R^o,$ and $H^o$. That is, $L^o, R^o,$ and $H^o$ are each in different connected components in $G - I$. 
\end{lem}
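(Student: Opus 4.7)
The plan is to show that the connected component of $L^o$ in $G-I$ is disjoint from $H^o\cup R^o$; applying the analogous argument at the right end of $P$ then yields the full lemma. The key tools are Part~5 of the saturated crossing formation together with~\eqref{m12}, both of which tightly control the neighborhoods of vertices in $L^o$.

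First I would verify that $N_P(v_1)\subseteq L\cup I$: by~\eqref{m10'} and the complete bipartite structure of $G[L\cup I]$ given by Part~2, $v_1$ is adjacent on $P$ only to the even-positioned vertices of $L$ and to $v_{i_1},\ldots,v_{i_q}$, and by definition of crossing neighbors no other vertex of $P$ is in $N(v_1)$. Consequently~\eqref{m12} yields $N_G(v_g)\subseteq L\cup I$ for every odd $g\le i_0-1$, while Part~5 yields $N_G(u)\subseteq L\cup I$ for every $u$ adjacent to some even $v_h$ with $h\le i_0-2$.

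The main step is to verify that no vertex of $V(P)\setminus L^o$ has its entire neighborhood contained in $L\cup I$. If $v_t\in V(P)\setminus(L\cup I)$, then $t\in\{i_0+1,\ldots,p\}\setminus\{i_1,\ldots,i_q\}$, and at least one of the path edges of $P$ at $v_t$, i.e., one of $v_{t-1}v_t$ or $v_tv_{t+1}$, forces a neighbor of $v_t$ to lie in $H^o\cup R^o$. A short case check on $t$, using Part~3 together with the fact (established in the proof of Theorem~\ref{cycle1}) that consecutive crossing vertices always differ by at least four on $P$ (so that the neighbor on the $H^o$ side of a boundary vertex like $v_{i_0+1}$ is itself in $H^o$), shows that this always occurs; hence $N_G(v_t)\not\subseteq L\cup I$.

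Putting the pieces together, every walk in $G-I$ starting in $L^o$ stays in $L^o\cup S$, where $S:=\{u\in V(G)\setminus V(P):N_G(u)\subseteq N_P(v_1)\}$; and from $S$ one can only step (in $G-I$) back into $L^o$. Consequently the component $C_L$ of $L^o$ in $G-I$ avoids $H^o\cup R^o$. By the symmetric argument at the right end of $P$, the component $C_R$ of $R^o$ avoids $L^o\cup H^o$. Therefore every vertex of $H^o$ lies in a component disjoint from both $C_L$ and $C_R$, and $L^o$, $R^o$, $H^o$ lie in pairwise different components of $G-I$. The most delicate point is the case analysis ruling out vertices of $V(P)\setminus L^o$ from $S$, where Part~3 of the saturated crossing formation and the gap estimate on crossing vertices do the work.
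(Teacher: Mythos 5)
Your proof is correct and follows essentially the same route as the paper's: both use Part~5 of the saturated crossing formation (together with the characterization $N(v_1)=N_P(v_1)\subseteq L\cup I$ from~\eqref{m12}) to show that, starting in $L^o$, one step goes only to $L^o\cup I$ or to a vertex $u\notin V(P)$ with $N(u)\subseteq N_P(v_1)$, and a second step from such a $u$ falls back into $L\cup I$. The paper phrases this as a contradiction obtained from a shortest $L^o$-to-$(H^o\cup R^o)$ path $Q=z_1,\dots,z_s$ (showing $z_3\in L\cup I$ is impossible), whereas you phrase it as a closure argument with the auxiliary set $S$; the main-step verification you spell out (that no $v_t\in V(P)\setminus(L\cup I)$ has $N(v_t)\subseteq L\cup I$, using Part~3 and the gap $\ge 4$ between consecutive crossing vertices) is exactly what justifies the paper's terse claim that $|V(Q)|\ge 3$.
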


\begin{proof}
Let $Q=z_1,z_2, \ldots, z_s$ be a shortest path that between vertices from two different sets in $\{L^o, R^o, H^o\}$. By minimality, $Q$ only intersects $P$ at $z_1$ and $z_s$. Also, $|V(Q)| \geq 3$ by Part 5 of the definition of saturated crossing formation.

Without loss of generality, $z_1 \in L^o$ (so $z_s \in H^o \cup R^o$). (Note that the case where $Q$ goes from $R^o$ to $H^o$ is symmetric to the case from $L^o$ to $H^o$.) By Part 5, since odd vertices in $P$ only have neighbors in $P$, $z_1$ and $z_s$ must be even. Also by Part 5, $N(z_2) \subseteq N(v_1) \subseteq L \cup X$. In particular, $z_3$ is in $P$, so we must have $z_3 = z_s$, but $z_s \in L \cup I$, where $(L \cup I) \cap (R^o \cup H^o) = \emptyset$, a contradiction.
\end{proof} 

\begin{claim}\label{dispath} Under  the conditions of Theorem~\ref{cycle1}, for any $0\leq s < t \leq q$, let $Q$ be a $(v_{i_s}, v_{i_t})$-path that is internally disjoint from $P$. Then
\begin{enumerate}
\item if $P$ has exactly one pair of crossing neighbors (so $s = 0, t = 1$), then $|V(Q)| < k+1$.
\item if $P$ has multiple pairs of crossing neighbors, then $|V(Q)| < 6$. 
\end{enumerate} \end{claim}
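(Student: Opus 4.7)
The plan is to prove both parts by contradiction: assume $|V(Q)|$ exceeds the claimed bound and construct a cycle of length at least $2k$ in $G$, contradicting $c(G)<2k$. Three ingredients from Theorem~\ref{cycle1} and the definition of saturated crossing formation are used throughout: (a) $p$ is odd with $p\geq 2k+1$, established inside the proof of Theorem~\ref{cycle1}; (b) Part~2 of the saturated crossing formation makes $v_1$ and $v_p$ each adjacent to every crossing $v_{i_0},\dots,v_{i_q}$; (c) in Case~2, Part~3 forces $i_t-i_s=4(t-s)$ for all $0\leq s<t\leq q$, and because all crossings sit in the same class of the bipartition, any path between two of them has an even number of edges, so $|V(Q)|$ is odd.

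For Case~1 (so $q=1$, $s=0$, $t=1$) I would suppose $|V(Q)|\geq k+1$ and consider
\[
C_a \;=\; P[v_1,v_{i_0}]\cup Q\cup v_{i_1}v_1,
\]
a cycle of length $i_0+|V(Q)|-1$. The inequality $d_P(v_1)=\frac{i_0}{2}+1\geq\alpha+1$ together with $\alpha\geq(k-1)/2$ yields $i_0\geq k-1$, so $|C_a|\geq 2k-1$, and bipartiteness upgrades this to $|C_a|\geq 2k$, the desired contradiction.

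For Case~2 ($q\geq 2$) I would assume $|V(Q)|\geq 6$, which by~(c) forces $|V(Q)|\geq 7$, and split on $t-s$. If $t-s=1$, the path $P'=P[v_1,v_{i_s}]\cup Q\cup P[v_{i_t},v_p]$ has the same endpoints as $P$, and the maximality of $P$ among such paths gives $|V(P')|\leq|V(P)|$; unpacking this inequality (using $i_t-i_s=4$) yields $|V(Q)|\leq 5$, contradicting $|V(Q)|\geq 7$. If instead $t-s\geq 2$, I would use~(b) to insert the edges $v_pv_{i_{s+1}}$ and $v_{i_{t-1}}v_1$ and form
\[
C^{**} \;=\; P[v_1,v_{i_s}]\cup Q\cup P[v_{i_t},v_p]\cup v_pv_{i_{s+1}}\cup P[v_{i_{s+1}},v_{i_{t-1}}]\cup v_{i_{t-1}}v_1,
\]
where the middle subpath collapses to the single vertex $v_{i_{s+1}}=v_{i_{t-1}}$ when $t-s=2$. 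A direct edge count using $i_t-i_s=4(t-s)$ and $i_{t-1}-i_{s+1}=4(t-s-2)$ telescopes to $|C^{**}|=p+|V(Q)|-8$, which is at least $(2k+1)+7-8=2k$ by~(a) and our standing assumption $|V(Q)|\geq 7$, again contradicting $c(G)<2k$.

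The main obstacle is checking that $C^{**}$ is a simple cycle, not merely a closed walk, for every admissible $(s,t)$ with $t-s\geq 2$. This reduces to the chain of inequalities $i_s<i_{s+1}\leq i_{t-1}<i_t$, which is immediate from the uniform spacing $i_{r+1}-i_r=4$ of the crossings in Case~2; it places $v_{i_{s+1}}$, $v_{i_{t-1}}$, and every intermediate vertex of $P[v_{i_{s+1}},v_{i_{t-1}}]$ strictly between positions $i_s$ and $i_t$ on $P$, so they are automatically disjoint from $P[v_1,v_{i_s}]$ and $P[v_{i_t},v_p]$; internal disjointness of $Q$ from $P$ then handles the remaining vertex conflicts. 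Once this bookkeeping is in place the length formula drops out, and the proof is essentially a cycle construction that exploits how Parts~2 and~3 of the saturated crossing formation turn $v_1$ and $v_p$ into universal hubs for the crossings.
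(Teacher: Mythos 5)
Your proof is correct and follows essentially the same route as the paper's. In Case~1 you build the same cycle $P[v_1,v_{i_0}]\cup Q\cup v_{i_1}v_1$ and use $i_0=2\alpha\geq k-1$; in Case~2 you handle $t=s+1$ via maximality of $P$ and $t\geq s+2$ via the same long cycle through $v_1$ and $v_p$ (yours is just the mirror-image parameterization, using the edges $v_pv_{i_{s+1}}$ and $v_{i_{t-1}}v_1$ rather than $v_pv_{i_{t-1}}$ and $v_{i_{s+1}}v_1$, which produces the identical cycle). Two small points worth noting: first, your parity observation that all $v_{i_j}$ lie in $X$ so $|V(Q)|$ is odd, hence $|V(Q)|\geq 7$, is a clean refinement that lets the arithmetic close at exactly $2k$ without a separate ``bipartite cycles have even length'' step; second, your length count $|C^{**}|=p+|V(Q)|-8$ is the correct tally, whereas the paper's stated ``$\geq |V(P)|+2$'' appears to overcount (the correct bound from $|V(Q)|\geq 6$ is $|V(P)|-2$, which is still enough), so your bookkeeping is actually tighter than what is printed.
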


\begin{proof}
First suppose $P$ has only one pair of crossing neighbors. Then $v_1$ has $\alpha + 1 - 1$ neighbors in $L$. That is, $|V(L)| \geq 2\alpha \geq k-1$. If $|V(Q)| \geq k+1$, then the cycle $P[v_1, v_{i_0}] \cup Q \cup v_{i_1}v_1$ has length at least $k-1 + k+1 - 1 = 2k-1$, a contradiction.

Otherwise, if $P$ has more than one pair of crossing neighbors, then each pair has 3 vertices strictly between them in $P$. Suppose $|V(Q)| \geq 6$ (so there are at least 4 internal vertices). If $t = s+1$, then replacing $P[v_{i_s}, v_{i_{s+1}}]$ with $Q$ gives a longer path with the same endpoints as $P$. So we may assume $s \geq t+2$. Then the cycle \[P[v_1, v_{i_s}] \cup Q \cup P[v_{i_t}, v_p] \cup v_pv_{i_{t-1}} \cup P[v_{i_{t-1}}, v_{i_{s+1}}] \cup v_{i_{s+1}}v_1\] has length at least $|V(P)| + 2$.   
\end{proof}

Observe that because $P$ is in crossing formation, $|V(L)| , |V(R)| \geq 4$ and $c(G) =2(k-1) \geq |V(L)| + |V(R)| = 8$, thus $k \geq 5$.

\section{The Main Lemma}

%For the rest of the paper, we assign a {\em deficiency} to vertices with degree less than $r$.

%\begin{definition}
Recall that the {\em deficiency} of a vertex $x\in X$ in a bipartite graph
 $G = (X, Y; E)$ is $D_G(x) := \max\{0, r - d_G(x)\}$. For a subset $X^* \subseteq X$, the {\em deficiency} of $X^*$ as $D(G, X^*) := \sum_{x \in X^*} D_G(x)$. 
%\end{definition}

Our goal is to eventually to prove the Main Theorem, Theorem~\ref{main}.

%{\bf Theorem~\ref{main}} (Main Theorem for bipartite graphs){\bf.}
%{\em Let $ k\geq 4$, $r\geq k+1$  and $m,m^*,n$ be positive integers with $n\geq k$, $m \geq m^*\geq  k-1$ and $m\geq k$. 
%Let $G=(X,Y;E)$ be a bipartite $2$-connected  graph with parts $X$ and $Y$, where $|X|=m$, $|Y|=n$, and
%let $X^*\subseteq X$ with $|X^*|=m^*$.  If $c(G)<2k$, then  \begin{equation}\label{e2}
 %m^*\leq  \frac{k}{2r-k+2} (n-1 + D(G, X^*)).
%\end{equation}
%or $r = k$, $n \leq k-1$, and $m^* \leq n-k+1 + D(G, X^*)$.
%}

The first big step is to prove the Main Lemma below that states roughly that graphs that contain a path in saturated crossing formation satisfy Theorem~\ref{main}.

\begin{lem}[Main Lemma]\label{mainlemma}Let $k \geq 5$ be odd, and let $G= (X,Y;E)$ and $X^* \subseteq X$ be a minimum (with respect to $|X|$) counterexample to Theorem~\ref{main}. Fix any $X^* \subseteq X$ and set $Y = Y^*$. If $|Y| \geq k$ and $P$ is a path as in the hypothesis of Theorem~\ref{cycle1}, then $P$ is not in saturated crossing formation. 
\end{lem}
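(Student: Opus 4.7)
Argue by contradiction: suppose $(G, X^*)$ is a minimum counterexample to Theorem~\ref{main} with respect to $|X|$, and suppose the path $P = v_1, \ldots, v_p$ guaranteed by Theorem~\ref{cycle1} is in saturated crossing formation. Decompose $P = L \cup H_1 \cup \cdots \cup H_q \cup R$ with crossing set $I = \{v_{i_0}, \ldots, v_{i_q}\}$ as in Section~4. Up to reversing the direction of $P$, we may assume $v_1 \in X^*$; the symmetric case $v_1 \in Y = Y^*$ is handled analogously with the modifications to the bookkeeping noted in the ``Main obstacle'' paragraph below.

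The plan is to delete an ``$L$-pendant'' $S$ from $G$ to produce a strictly smaller (in $|X|$) bipartite graph $G' := G - S$ that again satisfies the hypotheses and violates the conclusion of Theorem~\ref{main}, contradicting the minimality of $(G, X^*)$.

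\emph{Step 1 (Identifying $S$).} Let
\[
N_L := N_P(v_1) = \{v_2, v_4, \ldots, v_{i_0}\} \cup \{v_{i_1}, \ldots, v_{i_q}\} \subseteq Y,
\]
and set $B_L := \{u \in X : \varnothing \ne N_G(u) \subseteq N_L\}$. Using Parts~4 and~5 of saturated crossing formation together with Lemma~\ref{sep}, one shows $B_L \supseteq \{v_1, v_3, \ldots, v_{i_0 - 1}, v_1'\}$, and that every edge of $G$ with at least one endpoint in $B_L \cup (L^o \cap Y)$ has its other endpoint in $B_L \cup N_L$. Put $S := B_L \cup (L^o \cap Y)$. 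Then every edge between $S$ and $V(G) \setminus S$ runs from $B_L$ to $I \subseteq N_L$.

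\emph{Step 2 ($G' := G - S$ is $2$-connected).} Part~2 of saturated crossing formation, applied to $R \cup I$, gives that $v_p$ is adjacent in $G$ to every vertex of $I$; these edges persist in $G'$, so $v_{i_0}, \ldots, v_{i_q}$ share $v_p$ as a common neighbor. Combined with Lemma~\ref{sep} (which isolates $L^o$ from $H^o \cup R^o$ within $G - I$) and Part~5 (which bounds the neighborhoods of even vertices of $L^o$), any two disjoint $G$-paths between vertices of $V(G) \setminus S$ either survive in $G'$ or their $S$-traversals can be rerouted through $v_p$. A vertex-by-vertex cut check against $I$, $V(H^o)$, and $V(R)$ then confirms that $G'$ is $2$-connected.

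\emph{Step 3 (Bookkeeping).} Let $m'^* := |X^* \setminus S|$, $n' := |Y \setminus S|$, and $D' := D(G', X^* \setminus S)$. Since each $x \in X^* \setminus S$ retains all of its $G$-edges in $G'$ (no such edge has an endpoint in $S$), $D_{G'}(x) = D_G(x)$ for every $x \in X^* \setminus S$. Consequently,
\begin{align*}
m^* - m'^* &= |B_L \cap X^*|, \\
n - n' &= |L^o \cap Y| = \tfrac{i_0}{2} - 1, \\
D(G, X^*) - D' &= \sum_{u \in B_L \cap X^*} D_G(u) \;\ge\; (r - \alpha - 1)\,|B_L \cap X^*|,
\end{align*}
where we used $d_G(u) \le |N_L| = \alpha + 1 \le k - 1 \le r - 2$ for each $u \in B_L$.

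\emph{Step 4 (Algebraic contradiction).} Writing $\beta := |B_L \cap X^*|$, the statement that $(G', X^* \setminus S)$ is still a counterexample reduces to
\[
A \beta \;<\; k\bigl(\tfrac{i_0}{2} - 1\bigr), \quad \text{where } A := (2r - k + 2) - k(r - \alpha - 1) = r(2 - k) + 2 + k\alpha.
\]
Using $r \ge k + 1$, $\alpha \le k - 2$, and $k \ge 5$, a direct calculation gives $A \le -k + 4 \le -1$, while the right-hand side is at least $k \ge 5$ (since $i_0 \ge 4$). Hence the inequality is immediate, so
\[
m'^* - \tfrac{k}{2r - k + 2}(n' - 1 + D') \;>\; m^* - \tfrac{k}{2r - k + 2}(n - 1 + D(G, X^*)) \;>\; 0,
\]
meaning $G'$ violates Theorem~\ref{main}. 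Since $v_1 \in B_L \cap X^*$ forces $|X(G')| \le |X| - 1$, this contradicts the minimality of $(G, X^*)$, provided $G'$ still satisfies the size hypotheses $n' \ge k$, $m' \ge k$, $m'^* \ge k - 1$; the few remaining boundary cases are handled by a separate direct argument.

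\emph{Main obstacle.} The main obstacle is Step~2: verifying that, after deleting the $L$-pendant, no vertex of $I$ becomes a cut vertex of $G'$. The essential structural input is Part~2 of saturated crossing formation, which supplies $v_p$ as a common neighbor of all vertices of $I$ in $G'$. A secondary but important obstacle is handling the symmetric case $v_1 \in Y$: there $S$ consists of $Y$-vertices together with $L^o \cap X$, so deleting $S$ can decrease the degrees of surviving $X$-vertices in $I$, thereby increasing $D'$; one then has to verify that the larger decrease in $n$ (by $|B_L| \ge i_0/2 + 1$) together with this deficiency increase still satisfies the analogous algebraic inequality for $k \ge 5$.
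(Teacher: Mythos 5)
The proposal takes a genuinely different route from the paper, but it rests on a false premise and leaves the real case unresolved. The paper's own proof proceeds by a direct decomposition: remove the crossing set $I=\{v_{i_0},\dots,v_{i_q}\}$, bound the number of $X^*$-vertices in $C_L\cup C_R\cup I$ using the observation that $X^*\cap(C_L\cup C_R\cup I)=N(v_1)\cup N(v_p)$ (which has at most $k-1$ elements, while $n'+D(G,X')\ge 2r$), bound each remaining component $F$ separately via the disintegration Lemma~\ref{empty}, and sum. You instead try to delete an ``$L$-pendant'' $S$ and appeal to minimality of $|X|$. That is a legitimately different inductive strategy, but as written it does not go through.

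The central problem is the reduction ``up to reversing $P$, assume $v_1\in X^*$.'' Lemma~\ref{satcross} (proved immediately before the Main Lemma and assumed available) establishes that under the Main Lemma's hypotheses \emph{both} endpoints of $P$ lie in $Y$; reversing the path cannot move an endpoint into $X$. So the entire case you work out in Steps 1--4 is vacuous, and what you label the ``symmetric case'' $v_1\in Y$ is in fact the only case. It is not symmetric: the deficiency $D(\cdot,X^*)$ is counted only on the $X$-side, so in the real case $N_L=N_P(v_1)\subseteq X$ and the deleted set $S$ consists of $Y$-vertices (together with the $X$-vertices of $L^o$, which are themselves in $N_L$). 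Deleting $S$ then strips neighbors from the surviving $X$-vertices in $N_L\cup I$, increasing $D(G',X^*\setminus S)$ rather than decreasing it, and the sign of the key quantity $A$ in Step 4 is no longer obviously favorable. You flag this yourself in the ``Main obstacle'' paragraph, but flagging it is not resolving it: the arithmetic in the only case that actually occurs has not been carried out and is not a formality.

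There are two further gaps. First, Step 2 asserts that $G'=G-S$ is $2$-connected after ``a vertex-by-vertex cut check,'' but no such check is performed; the claim is nontrivial because after deleting $B_L$ the vertices $\{v_{i_0},\dots,v_{i_q}\}$ may lose most of their neighborhood, and the single common neighbor $v_p$ supplied by Part~2 is not by itself enough to rule out a cut vertex among $I$, in $H^o$, or between $G'$ and the stray components $F$. Second, the boundary conditions $n'\ge k$, $m'\ge k$, $m'^*\ge k-1$ required to invoke Theorem~\ref{main} on $G'$ are dismissed as ``a few remaining boundary cases handled by a separate direct argument''; in an induction on $|X|$ these degenerate cases are precisely where the argument often collapses, and they need an explicit treatment.
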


\subsection{Lemmas for induction}
We first prove a series of lemmas. Often, we will use the following inductive argument:

\begin{lem}\label{2connind}Let $k \geq 4$. Let $G=(X,Y; E)$ and $X^* \subseteq X$ be a minimum (with respect to $|X|$) counterexample to Theorem~\ref{main}. Suppose $|X| \geq k+1$, $|X^*| \geq k$, $|Y| \geq k$ and there exists a vertex $x \in X^*$  with $d(x) \leq k-2$. Then $G - x$ is not 2-connected.
\end{lem}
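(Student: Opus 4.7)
The plan is to argue by contradiction: assume that $G - x$ is $2$-connected, and use the minimality of the counterexample $(G, X^*)$ to derive an inequality that violates the assumption that $(G, X^*)$ fails~\eqref{e2}. The key observation is that the low-degree vertex $x$ carries enough deficiency to absorb the ``$-1$'' one loses when passing from $(G, X^*)$ to $(G - x, X^* - x)$.

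Set $G' := G - x$, $X' := X - x$, $X'^* := X^* - x$, and $Y' := Y$. Since $x \in X^*$, the assumptions $|X| \geq k+1$, $|X^*| \geq k$, $|Y| \geq k$ give $|X'| \geq k$, $|X'^*| \geq k-1$, $|Y'| \geq k$, and $|X'| \geq |X'^*|$; also $c(G') \leq c(G) < 2k$. By the contradictory assumption $G'$ is $2$-connected, and $|X'| < |X|$, so the minimality of $(G, X^*)$ forces Theorem~\ref{main} to hold for $(G', X'^*)$. Since $x$ and every $x' \in X'^*$ lie in the part $X$, deleting $x$ does not change $d_G(x')$, so $D(G', X'^*) = D(G, X^*) - D_G(x)$; and $d_G(x) \leq k-2 < r$ gives $D_G(x) \geq r - k + 2$. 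Chaining these together yields
\[
|X^*| \;\leq\; 1 + \frac{k}{2r-k+2}\bigl(|Y| - 1 + D(G, X^*)\bigr) - \frac{k\, D_G(x)}{2r-k+2}.
\]

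To reach the contradiction I just need to check that the last term dominates the leading $1$, i.e.\ that $k\, D_G(x) \geq 2r - k + 2$. Using $D_G(x) \geq r - k + 2$, this reduces to $(k-2)r \geq (k-1)(k-2)$, i.e.\ $r \geq k - 1$, which is immediate from $r \geq k + 1$. Hence $(G, X^*)$ satisfies~\eqref{e2}, contradicting the choice of $(G, X^*)$ as a counterexample. The main step is conceptual rather than computational: the deficiency of a single vertex of degree at most $k-2$ is already large enough, in comparison with the coefficient $\frac{k}{2r-k+2}$, to compensate for losing one vertex from $X^*$; the only real risk is mis-accounting the deficiency change when passing from $G$ to $G - x$, which is avoided by noting that $x$ sits on the $X$-side so degrees within $X^* - x$ are unaffected.
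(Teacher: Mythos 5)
Your proof is correct and matches the paper's argument essentially line for line: delete $x$, apply the inductive bound to $(G-x, X^*-x)$ using minimality, observe $D(G-x, X^*-x) = D(G,X^*) - D_G(x)$ with $D_G(x) \geq r-k+2$, and check that $\tfrac{k}{2r-k+2}(r-k+2) \geq 1$ reduces to $r \geq k-1$. The explicit remark that removing a vertex on the $X$-side leaves the degrees of all other $X$-vertices unchanged is a nice clarification of a step the paper leaves implicit.
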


\begin{proof}Suppose $G - x$ is 2-connected. 
%Add edges to $G - x$ until it is $2k$-saturated. Note that adding edges to $G - x$ cannot increase the deficiency.
%That is, for all vertices $v$ in $G - x$, $D_{G - x}(v) \geq D_{G'}(v)$. Thus \[D(G', X^* - x) \leq D(G-x, X^* - x) = D(G, X^* - x) - D_G(x).\]
As $d_G(x) \leq k-2$, we have $D_G(x) \geq r - k+2$. Since $|X - x| \geq k+1 - 1 = k$ and $|X^* - x| \geq k - 1$, by the choice of $G$ as a minimum counterexample, $G-x$ and $X^* - x$ satisfy
\[|X^*| - 1 = |X^* - x| \leq \frac{k}{2r-k+2}(n-1 + D(G', X^*- x)) \leq \frac{k}{2r-k+2}(n-1+D(G, X^*)) - \frac{k(r-k+2)}{2r-k+2}.\]
Elementary calculation shows that $\frac{k}{2r-k+2}(r-k+2) \geq 1$ whenever $r \geq k-1$. Thus \[|X^*| \leq \frac{k}{2r-k+2}(n-1+D(G, X^*)),\] a contradiction.

\end{proof}
\begin{lem}\label{ind}
Let $k \geq 4$. Let $G=(X,Y; E)$ and $X^* \subseteq X$ be a minimum (with respect to $|X|$) counterexample to Theorem~\ref{main}. Suppose $|X| \geq k+1$, $|X^*| \geq k$, $|Y| \geq k$, and $P$ is a path in $G$ with an endpoint $x$. Suppose also that $x$ has no neighbors outside of $P$, $d(x) \leq k-2$, and $x \in X^*$.
Then there does not exist a vertex $x' \in V(G) - V(P)$ such that $N(x) \subseteq N(x')$.
\end{lem}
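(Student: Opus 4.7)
The plan is to show that the existence of such an $x'$ would force $G-x$ to be 2-connected, contradicting Lemma~\ref{2connind}. The intuition is that $x'$ acts as a ``redundant copy'' of $x$, so any 2-vertex separator of $G$ that uses $x$ could be rerouted through $x'$, making $x$ removable without breaking 2-connectivity.

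First I would suppose for contradiction that $x' \in V(G) - V(P)$ satisfies $N(x) \subseteq N(x')$. Since $N(x) \subseteq Y$ and $G$ is bipartite, $x' \in X$. Because $G$ is 2-connected (as a minimum counterexample to the Main Theorem) but, by Lemma~\ref{2connind}, $G - x$ is not 2-connected, the graph $G - x$ is connected and admits a cut vertex $y$. Let $C_1, C_2, \ldots$ be the components of $G - x - y$. Since $G - y$ is connected and $V(G - y) = V(G - x - y) \cup \{x\}$, the vertex $x$ must have at least one neighbor in every component $C_i$.

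I would then split into two cases. \textbf{Case 1:} $y \neq x'$. Then $x'$ lies in a unique component, say $C_{i_0}$, and since $xx' \notin E(G)$ (same part), $N_G(x') = N_{G-x}(x') \subseteq V(C_{i_0}) \cup \{y\}$. Combined with $N(x) \subseteq N(x')$ this forces $N(x) \subseteq V(C_{i_0}) \cup \{y\}$, so $x$ has no neighbor in any $C_j$ with $j \neq i_0$, contradicting the previous paragraph. \textbf{Case 2:} $y = x'$. Here I would invoke the hypothesis that $x$ has no neighbors outside $V(P)$: since $x' \notin V(P)$, the subpath $P - x = v_2, \ldots, v_p$ remains connected in $G - x - x'$ and hence lies in a single component $C^*$, so $N(x) \subseteq V(P) - \{x\} \subseteq V(C^*)$. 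But $y = x'$ being a cut vertex yields a second component $C' \neq C^*$ in which $x$ would also need a neighbor, which is impossible as $V(C') \cap V(C^*) = \emptyset$.

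I expect Case 2 to be the delicate one, as it is the only place where the hypothesis ``$x$ has no neighbors outside $P$'' is actually used; without it, one could not rule out $x'$ itself serving as the cut vertex of $G - x$. The Case 1 argument by contrast only uses $N(x) \subseteq N(x')$ and the position of $x'$ outside a single component. The whole proof is otherwise a clean application of 2-connectivity plus Lemma~\ref{2connind}, with no combinatorial counting needed.
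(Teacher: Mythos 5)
Your proposal is correct and follows essentially the same approach as the paper's proof: assume $x'$ exists, use Lemma~\ref{2connind} to get a cut vertex of $G-x$, observe that $x$ must have a neighbor in every component of $G-x-y$, and then use $N(x)\subseteq N(x')$ together with $N(x)\subseteq V(P)$ and $x'\notin V(P)$ to derive the contradiction. The only cosmetic difference is that you split into cases $y\neq x'$ and $y=x'$, whereas the paper first deduces the cut vertex must equal $x'$ and then contradicts via the subpath $P[y_1,y_2]$; the underlying reasoning is identical.
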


\begin{proof}Suppose such a vertex $x'$ exists. If $G - x$ is not 2-connected, then it contains a cut vertex $v$ such that $(G - x) - v$ contains at least two components, $C_1$ and $C_2$, and $v$ is the only vertex in $G - x$ with neighbors in both $C_1$ and $C_2$. Then in $G$, $x$ and $v$ form a cut set, and $x$ and $v$ are the only vertices in $G$ with neighbors in both $C_1$ and $C_2$. As $N(x) \subseteq N(x')$,  $v = x'$. Let $y_1$, and $y_2$ be neighbors of $x$ such that $y_1 \in C_1$, and $y_2 \in C_2$. Because $N(x) \subseteq V(P)$, $y_1, y_2 \in V(P) - x$, but the path $P[y_1, y_2]$ is a $(y_1, y_2)$-path in $G$ that avoids both $x$ and $x'$, a contradiction.

\end{proof}

\subsection{Paths in saturated crossing formation}

\begin{lem}\label{satcross}Let $k \geq 5$ be odd, and let $G= (X,Y;E)$ and $X^* \subseteq X$ be a minimum (with respect to $|X|$) counterexample to Theorem~\ref{main}. Fix any $X^*$ and set $Y = Y^*$. If $|Y| \geq k$ and $P=v_1, \ldots, v_p$ is a path as in the hypothesis of Theorem~\ref{cycle1}, then the endpoints $v_1$ and $v_p$ of $P$ belong to the partite set $Y$ of $G$.  \end{lem}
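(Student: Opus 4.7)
My plan is to argue by contradiction: suppose $v_1 \in X$ and derive a contradiction via Lemma~\ref{ind}. First, since the proof of Theorem~\ref{cycle1} showed that $p$ is odd (indeed $p \geq 2k+1$), the two endpoints of $P$ lie in the same partite class, so $v_p \in X$ as well. Next I would compute $d_G(v_p)$: from that same proof, $d_P(v_p) \geq k-\alpha$, and equality is forced because the cycle produced by Lemma~\ref{cycle}(c) has length $2(d_P(v_1)+d_P(v_p)-2) \leq 2k-2$ while $d_P(v_1) \geq \alpha+1$. Part~5 of the saturated crossing formation gives $N_G(v_p) \subseteq V(P)$, so $d_G(v_p)=k-\alpha$, which, since $\alpha \geq (k-1)/2$, is at most $(k+1)/2 \leq k-2$ (the last inequality using $k\geq 5$).

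I would then verify the hypotheses of Lemma~\ref{ind} with $x := v_p$. The vertex $v_p$ is an endpoint of $P$, has $d_G(v_p) \leq k-2$, and has no neighbors outside $V(P)$ by Part~5. Because $v_p \in V(G_{k-1-\alpha}(X^*,Y^*)) \subseteq X^* \cup Y^*$ and $v_p \in X$, we also have $v_p \in X^*$. Part~4 combined with Part~5 yields a vertex $v_p' \notin V(P)$ with $N(v_p') \supseteq N_P(v_p') = N_P(v_p) = N_G(v_p)$. Lemma~\ref{ind} now produces the desired contradiction, so $v_1 \in Y$, and by the parity observation, $v_p \in Y$ as well.

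The main bookkeeping obstacle is verifying the numerical conditions $|X| \geq k+1$, $|X^*| \geq k$, $|Y| \geq k$ required by Lemma~\ref{ind}. The last is given in Lemma~\ref{satcross}; the first follows immediately from $v_1, v_p \in X$ together with $p \geq 2k+1$ odd, which forces $(p+1)/2 \geq k+1$ vertices of $P$ to lie in $X$. The condition $|X^*| \geq k$ is where I would need to be most careful: since Theorem~\ref{main} only assumes $|X^*| \geq k-1$, I would directly rule out the boundary case $|X^*| = k-1$ by checking that inequality~\eqref{e2} is already satisfied in that case under the structural constraints of the saturated crossing formation (in particular, using the numerous vertices of $V(P) \cup \{v_1', v_p'\}$ contributing to $n$).
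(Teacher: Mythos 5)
Your overall route matches the paper's: assume toward a contradiction that $v_1, v_p \in X$ (forced to be the same part since $p$ is odd), use Parts~4 and~5 of the saturated crossing formation to show that $v_p$ has all its neighbors inside $P$, $d(v_p)\leq k-2$, and that there is a vertex $v_p'\notin V(P)$ with $N(v_p)\subseteq N(v_p')$, and then invoke Lemma~\ref{ind} with $x:=v_p$. Those steps are right, including the computation $d(v_p)=d_P(v_p)\leq k-\alpha\leq (k+1)/2\leq k-2$ for $k\geq 5$ and the inclusion $v_p\in X^*$ (since $v_p\in V(G_{k-1-\alpha}(X^*,Y^*))\cap X$).

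The gap is exactly where you flagged it: verifying $|X^*|\geq k$ for Lemma~\ref{ind}. Your proposed fix does not work as sketched. First, the extra vertices $v_1'$ and $v_p'$ do \emph{not} contribute to $n=|Y|$: since $v_1,v_p\in X$, we have $v_2,v_{p-1}\in Y$, and $v_1',v_p'$ are neighbors of $v_2,v_{p-1}$, hence lie in $X$. Second, even granting $n\geq (p-1)/2\geq k$, a crude count does not yield \eqref{e2} when $m^*=k-1$: we would need $n-1+D\geq \frac{(k-1)(2r-k+2)}{k}$, and $n-1\geq k-1$ together with $D\geq 0$ fails for $r\geq k+1$ (e.g. $k=5, r=6$ requires $n-1+D\geq 7.2$, not guaranteed by $n\geq 5$). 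The clean argument (which the paper uses) is direct: $v_2$ lies in $V(G_\alpha(X^*,Y^*))$, so it has at least $\alpha+1$ neighbors there, all in $X^*$; similarly $v_{p-1}\in V(G_{k-1-\alpha}(X^*,Y^*))$ has at least $k-\alpha$ neighbors in $X^*$; and by Lemma~\ref{sep} (with $v_2\in L^o$ and $v_{p-1}\in R^o$) these two neighbor sets are disjoint, so $|X^*|\geq (\alpha+1)+(k-\alpha)=k+1$. With that substitution your proof becomes correct and identical to the paper's.
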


\begin{proof}
Suppose $v_1, v_p \in X$. By Lemma~\ref{cycle}, one of the endpoints of $P$, say $v_p$, must satisfy $d_P(v_p) \leq \frac{k+1}{2}$. Since $v_2 \in G_\alpha(X^*) \subseteq X^*$, $v_{p-1} \in G_{k-1 - \alpha}(X^*) \subseteq X^*$ and $v_2$ and $ v_{p-1}$ have no common neighbors by Lemma~\ref{sep}, we have $|X^*| \geq d_{G_\alpha}(v_2) + d_{G_{k-1-\alpha}}(v_{p-1}) \geq \alpha + 1 + k - \alpha=k+1$. Also, by Part 4 of the definition of saturated crossing formation, there exists a vertex $v_p' \in V(G) - V(P)$ with $N_P(v_p') = N_P(v_p)$.  By Part 5 of the definition of saturated crossing formation, $N(v_p) = N_P(v_p)$, so $d(v_P) \leq k-2$. But the existence of $v_p'$ contradicts Lemma~\ref{ind}.
\end{proof}

Suppose $P = v_1, \ldots, v_p$ is in saturated crossing position. Denote $P = L \cup H_1 \cup ... \cup H_\ell \cup R$ as before.

\begin{lem}\label{empty} Under  the conditions of Theorem~\ref{cycle1}, let $F$ be a component of $G - \{v_{i_0}, \ldots, v_{i_q}\}$ distinct from the components containing $L$ and $R$. Then the $\frac{k-1}{2}(X^* \cap F, Y \cap F)$-disintegration of $F$ is empty.\end{lem}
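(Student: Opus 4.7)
The plan is to argue by contradiction. Suppose $F^{\flat} := F_{(k-1)/2}(X^* \cap F, Y \cap F) \neq \emptyset$; then, by the definition of disintegration, $F^{\flat}$ is a bipartite subgraph of $F$ with $\delta(F^{\flat}) \geq (k+1)/2$. The first step is to extract a long path inside $F^{\flat}$. Take a longest path $T = t_1 t_2 \cdots t_s$ in $F^{\flat}$; maximality forces both endpoints to have all of their $F^{\flat}$-neighbors on $T$, and since $F^{\flat}$ is bipartite these $\geq (k+1)/2$ neighbors of $t_1$ occupy even positions along $T$ in the partite class opposite to $t_1$. Consequently $s \geq 2 \cdot (k+1)/2 = k+1$.

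Next, I connect the endpoints of $T$ back to the cut-set $I = \{v_{i_0}, \ldots, v_{i_q}\}$. Set $\partial F := N_G(V(F)) \cap I$; since $G$ is 2-connected and $F$ is a component of $G - I$, no single vertex of $I$ can separate $F$ from the rest of $G$, so $|\partial F| \geq 2$. I apply Menger's theorem in $G$ to the sets $\{t_1, t_s\}$ and $\partial F$: any single-vertex $(\{t_1,t_s\}, \partial F)$-separator would be a cut vertex of $G$ (using $|\{t_1,t_s\}|, |\partial F| \geq 2$), contradicting 2-connectivity. Hence there are two vertex-disjoint paths $R_1$ from $t_1$ to some $z_1 \in \partial F$ and $R_2$ from $t_s$ to some $z_2 \in \partial F$ with $z_1 \neq z_2$. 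Because $F$ is a component of $G - I$, both paths may be taken inside $G[F \cup \partial F]$ (truncating at the first $\partial F$-vertex reached), and, with a further careful rerouting exploiting the high min-degree of $F^{\flat}$, I may assume $R_1, R_2$ are also internally disjoint from $T$.

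Concatenating, $\pi := R_1 \cup T \cup R_2$ is a simple $(z_1, z_2)$-path with
\[
|V(\pi)| \;=\; |V(R_1)| + |V(T)| + |V(R_2)| - 2 \;\geq\; 2 + (k+1) + 2 - 2 \;=\; k+3.
\]
Its internal vertices lie entirely in $V(F)$, which by Lemma~\ref{sep} is disjoint from $V(P) \setminus I$, and no internal vertex of $\pi$ lies in $I$ (the only $I$-vertices on $\pi$ are the endpoints $z_1, z_2$). Therefore $\pi$ is a $(v_{i_{s'}}, v_{i_{t'}})$-path internally disjoint from $P$ for some $0 \leq s' < t' \leq q$. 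Claim~\ref{dispath} then forces $|V(\pi)| < k+1$ if $P$ has only one pair of crossing neighbors and $|V(\pi)| < 6$ if it has multiple pairs; since $k \geq 5$, both $k + 3 > k$ and $k + 3 \geq 8 > 5$ contradict these bounds, completing the proof.

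The main obstacle is the assertion at the end of the second paragraph that $R_1$ and $R_2$ can be made internally disjoint from $T$. Menger only guarantees $R_1 \cap R_2 = \emptyset$; in principle either $R_i$ could pass through some internal vertex $t_j$ of $T$, and a naive reroute splicing in a prefix of $T$ could recreate an intersection with the other $R$-path. The intended fix uses the fact that every internal vertex of $T$ has $\geq (k+1)/2$ neighbors in $F^{\flat}$, giving enough alternatives to bypass any such crossing locally while preserving the length bound $|V(\pi)| \geq k+3$; if needed, one can also slightly shift an endpoint of $T$ via a rotation inside the high min-degree subgraph $F^{\flat}$ to avoid a stubborn intersection.
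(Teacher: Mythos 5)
The proof has a genuine gap, and it is exactly the one you flag in your final paragraph — but it is not a technicality that a ``careful rerouting'' can be waved past; it is the crux of the lemma. Menger's theorem gives two vertex-disjoint $\{t_1,t_s\}$--$\partial F$ paths $R_1, R_2$, but nothing forces them to avoid the interior of $T$. The natural fix (truncate each $R_i$ at its last meeting with $T$, say at $t_a$ and $t_b$) produces a path $R_1' \cup T[t_a,t_b] \cup R_2'$ whose middle segment can be arbitrarily short, so the bound $|V(\pi)| \geq k+3$ evaporates. Shifting an endpoint of $T$ by a rotation does not resolve this either: the rotated path still has two boundary points $t_a,t_b$ whose $T$-distance is uncontrolled, and the high minimum degree of $F^{\flat}$ alone does not let you reroute around a short middle segment in a generic (not complete bipartite) graph.

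The paper's proof avoids this by working with a \emph{cycle} rather than a path inside the disintegrated piece. When $F_{\alpha'}$ is not complete bipartite it invokes Theorem~\ref{cycle1} to obtain a path in saturated crossing formation, from which it extracts a cycle $C'$ of length exactly $2(k-1)$. The point of the cycle is precisely to defeat the obstruction above: any two attachment points of disjoint $I$--$C'$ paths split $C'$ into two arcs, and one of them automatically has at least $k$ vertices, so the concatenated $I$--$I$ path is long enough to contradict Claim~\ref{dispath} no matter where the attachments land. (The complete bipartite case is handled separately, where snaking inside $K_{a,b}$ with $a,b \geq (k+1)/2$ gives the required freedom.) The paper then also needs a further case analysis because $C'$ itself may pass through vertices of $I$. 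Your Menger step, the minimum-degree computation $\delta(F^{\flat}) \geq (k+1)/2$, the path-length bound $s \geq k+1$, and the appeal to Lemma~\ref{sep} and Claim~\ref{dispath} are all correct, but without the cycle-arc dichotomy the length estimate for $\pi$ is not justified.
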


\begin{proof}
Set $\alpha' = (k-1)/2$ and denote $F_{\alpha'}= G_{\alpha'}(X^*\cap F, Y \cap F) = G_{k-1-\alpha'}(X^*\cap F, Y \cap F)$. Because $G$ is 2-connected, there are at least 2 neighbors of $F$ in $P$, and so these neighbors must be contained in $\{v_{i_0}, \ldots, v_{i_q}\}$ by Lemma~\ref{sep}. 
%Note that because $v_2$ is not contained in a crossing pair (otherwise $v_2v_p \in E(G)$), $v_1$ has at most $q+1 - 1$ neighbors in $\{v_{i_0}, \ldots, v_{i_q}\}$. Thus $q \leq \alpha - 1$. Similarly, in view of $v_p$, $q \leq k -2 - \alpha$. Thus $q \leq (k-3)/2$, and $|V(P[v_{i_0}, v_{i_q}])|\leq 4((k-3)/2) + 1 = 2(k-3) + 1$. 

If $F_{\alpha'}$ is complete bipartite, then each part has size at least $\alpha' + 1 = (k+1)/2$, so we may find a path of length at least $k+1$ from some $v_{i_s}$ to some $v_{i_t}$ whose internal vertices are all from $F$, violating Lemma~\ref{dispath}.

If $F_{\alpha'}$ is not complete bipartite, then fix a longest path $P_F=u_1, \ldots, u_{p'}$ with nonadjacent endpoints in $F_{\alpha'}$ such that $\sum_{i=1}^{p'} d_{P_F}(u_i)$ is maximized. Then by Theorem~\ref{cycle1}, $P_F$ must be in saturated crossing formation. Again, $u_1$ has exactly $\alpha' + 1$ neighbors in $F_{\alpha'}$ in $P_F$. Furthermore, by Lemma~\ref{satcross}, $u_1, u_{p'} \in Y$. 

Denote $P_F = L' \cup H'_1 \cup \ldots \cup H'_{q'} \cup R'$ where $H'_1 \cap L' = \{u_{j_0}\}$, for each $0 \leq s \leq q' - 1, H'_{s} \cap H'_{s+1} = u_{j_s},$ and $H'_{q'} \cap R' = \{u_{j_{q'}}\}$. There exists a cycle $C' = P_F[u_1, u_{j_0}] \cup u_{j_0} u_{p'} \cup P_F[u_{p'}, u_{j_1}] \cup u_{j_1}u_1$ which has length exactly $2(k-1)$. 

{\bf Case 1:} at most 1 vertex from $\{v_{i_0}, \ldots, v_{i_q}\}$ is contained in $C'$. Then because $\{v_{i_0}, \ldots, v_{i_q}\}$ separates $F$ from $G - F$, $C'$ never leaves $F \cup \{v_{i_0}, \ldots, v_{i_q}\}$. Choose two shortest disjoint paths $P_s, P_t$ from $\{v_{i_0}, \ldots, v_{i_q}\}$ to $V(C')$ (possibly $P_s$ or $P_t$ may be a single vertex). Such paths exist because $G$ is 2-connected. Furthermore, by choice of $P_s$ and $P_t$, the paths each contain exactly one vertex from $\{v_{i_0}, \ldots, v_{i_q}\}$ and one vertex from $C'$, and hence the paths cannot leave $F \cup \{v_{i_0}, \ldots, v_{i_q}\}$. Say $P_s$ has endpoints $v_{i_s}$ and $u_{s'} \in V(C')$ and $P_t$ has endpoints $v_{i_t}$ and $u_{t'} \in V(C')$. 

Because $|V(C')| = 2(k-1)$, one of the $(u_{s'},u_{t'})$-paths along $C'$ must have at least $k-1$ edges, i.e., $k$ vertices. Then because at least one of $P_s$ or $P_t$ has at least 2 vertices by the case, we have that $P_s \cup P_F[u_{s'}, u_{t'}] \cup P_t$ is a path of length at least $k+1$ from $v_{i_s}$ to $v_{i_t}$, contradicting Lemma~\ref{dispath}.

{\bf Case 2:} at least 2 vertices from $\{v_{i_0}, \ldots, v_{i_q}\}$ are contained in $C'$. Let $N_F = \{v_{i_0}, \ldots, v_{i_q}\} \cap V(P_F)$. If any vertex $v_{i_s} \in N_F$ appears in $L'$ or $R'$, then because $v_{i_s}$ is even, we have that $v_{i_s} \in N(u_1) \cup N(u_{p'})$. Therefore either $d_{P_F}(u_1) \geq (k+1)/2 + 1$ or $d_{P_F}(u_{p'}) \geq (k+1)/2 + 1$, which would give us a longer cycle by Lemma~\ref{cycle}, a contradiction. Therefore we may assume that each vertex in $N_F$ appears in $P_F$ strictly between some crossing neighbors. We will show that $P_F$ has only one pair of crossing neighbors, in which case $N_F \cap V(C') = \emptyset$, leading to a contradiction.

Suppose not, then each pair of crossing neighbors have exactly 3 vertices strictly between them in $P_F$. Because each $v_{i_s} \in N_F$ is even, $v_{i_s}$ must appear as the middle vertex between a pair of crossing neighbors, say $u_{j_{s'-1}}$ and $u_{j_{s
'}}$, and there cannot be any other vertices from $N_F$ in between these crossing neighbors. Furthermore, the predecessor and the successor of $v_{i_s}$ in $P_F$ belong to $K$ since they are odd neighbors of $u_{j_{s'_1}}$ or $u_{j_{s'}}$ which are both in $F$. Thus $P_F$ never leaves $K \cup \{v_{i_0}, \ldots, v_{i_q}\}$. 

Suppose there exists $v_{i_s}, v_{i_t} \in N_F$ such that $v_{i_s}$ and $v_{i_t}$ appear between consecutive pairs of crossing neighbors in $P_F$. Say $v_{i_s} \in H'_{s'}$ and $v_{i_t} \in H'_{s'+1}$. Then the cycle $C''= P_F[u_1, v_{i_s}] \cup [v_1, v_2, v_3] \cup P_F[v_{i_t}, u_{P'}]  \cup u_{p'}u_{j_{s'}} \cup u_{j_{s'}}u_1$ omits only the successor of $v_{i_s}$ and the predecessor of $v_{i_t}$ in $P_F$ and includes three additional vertices, $v_1, v_2, v_3$ from $P$. Therefore $|V(C'')| = |V(P_K)| - 2 + 3 >2k$, a contradiction.
Otherwise, each $v_{i_s}, v_{i_t} \in N_F$ appear in $P_F$ between nonconsecutive pairs of crossing neighbors of $P_F$. Pick $v_{i_s}, v_{i_t}$ such that no other vertex in $N_F$ lies between them in $P_F$. Then $P_F[v_{i_s}, v_{i_t}]$ is a path with at least $9$ vertices that is internally disjoint from $P$, contradicting Lemma~\ref{dispath}.
It follows that $F_{\alpha'}$ is empty.
\end{proof}

We are now ready to prove the Main Lemma.

{\bf Lemma~\ref{mainlemma}.} {\em Let $k \geq 5$ be odd, and let $G= (X,Y;E)$ and $X^* \subseteq X$ be a minimum (with respect to $|X|$) counterexample to Theorem~\ref{main}. Fix any $X^* \subseteq X$ and set $Y = Y^*$. If $|Y| \geq k$ and $P$ is a path as in the hypothesis of Theorem~\ref{cycle1}, then $P$ is not in saturated crossing formation. }

 \begin{proof} Let $P = v_1, \ldots, v_p$ be the path in saturated crossing formation. Let $C_L$ and $C_R$ denote the connected components of $G - \{v_{i_0}, \ldots, v_{i_q}\}$ that contain $L - \{v_{i_0}\}$ and $R - \{v_{i_q}\}$ respectively. By Lemma~\ref{sep}, $C_L$ and $C_R$ are distinct. Let $D = C_L \cup C_R \cup \{v_{i_0}, \ldots, v_{i_q}\}$ and set $X' = X^* \cap D$, $n' = |Y \cap D|.$

By Lemma~\ref{satcross}, $v_1, v_p \in Y$, and hence each odd vertex in $P$ also belongs to $Y$.  We first claim that there cannot be any $X$ vertices in $C_L$ outside of $P$: suppose such vertices exist, and pick a shortest path $Q$ from $X-L$ to $P$ with endpoints $x \in X - L$, $v \in L$. If $v$ is odd, then $Q=vx$, but $x$ is a neighbor of $v$ outside of $P$, violating Part 5 of the definition of saturated crossing formation. If $v$ is even, then $Q$ contains at least 3 vertices, and the predecessor of $v$ in $Q$ is in $X$ and has a neighbor outside of $P$, again violating Part 5. Therefore $X \cap C_L \subseteq L$. Similarly, $X \cap C_R \subseteq R$. It follows from Part 2 of the definition of saturated crossing formations that $X \cap D \subseteq N(v_1) \cup N(v_p)$.

Observe that we must have $X' = N(v_1) \cup N(v_p)$: by definition of saturated crossing formation, all neighbors of $v_1$ and all neighbors of $v_p$ belong in $G_\alpha \cup G_{k-1-\alpha} \subseteq X^* \cap D = X'$. Furthermore, $N(v_1) \cup N(v_p)$ contains all of the even vertices in $L \cup R \cup \{v_{i_0}, \ldots, v_{i_q}\}$ and therefore all of the $X$ vertices in $D$. This proves that $X' = N(v_1) \cup N(v_p)$.

As $v_1$ and $v_p$ share at least two neighbors (the crossing neighbors), we have \[|X'| = |N(v_1) \cup N(v_p)| \leq (\alpha + 1) + (k - \alpha) - 2 = k-1.\]

Furthermore, because $v_2$ and $v_{p-1}$ share no neighbors, \[n' + D(G, X') \geq d_G(v_2) + D_G(v_2) + d_G(v_{p-1}) + D_G(v_{p-1}) \geq 2r.\] 

Putting these together, we have
\[\frac{|X'|}{n'-1+D(G, X')} \leq \frac{k-1}{2r-1},\] and therefore
\begin{equation}\label{LR}
|X'| \leq \frac{k-1}{2r-1}(n'-1+D(G,X'))  < \frac{k}{2r-k+2}(n' - 1 + D(G, X')).
\end{equation} 

Finally, for any component $F$ of $G - \{v_{i_0}, \ldots v_{i_q}\}$ distinct from $C_L$ and $C_R$, we have that the $\frac{k-1}{2}(X^* \cap F, Y\cap F)$-disintegration of $F$ is empty by Lemma~\ref{empty}. Set $n_F = |Y \cap F|$. Each time we delete a vertex in the disintegration process, we delete at most $(k-1)/2$ edges until we reach the last $k-1$ vertices where there are at most $((k-1)/2)^2$ edges. Thus \[e(F) \leq \frac{k-1}{2}(n_F+m_F - (k-1)) + \left(\frac{k-1}{2}\right)^2 = \frac{k-1}{2}(n_F+m_F - \frac{k-1}{2}).\] As $e(F) \geq r |X^* \cap F| - D(G, (X^* \cap F))$, we have

\begin{eqnarray}\label{ml2}
|X^* \cap F| \leq \frac{\frac{k-1}{2}\left(n_F-\frac{k-1}{2}\right) + D(G, X^* \cap F)}{r - \frac{k-1}{2}} < \frac{k}{2r-k+2}(n_F + D(G, X^* \cap F)).
\end{eqnarray}

Combining \eqref{LR} and \eqref{ml2},

\begin{eqnarray*}
|X^*| &=& |X'| + \sum_{F\neq C_L, C_R} |X^* \cap F|
\\ &< &  \frac{k}{2r-k+2}(n'-1 + D(G, X')) + \sum_{F \neq C_L, C_R} \frac{k}{2r-k+2}\left(n_F +D(G, X^* \cap F) \right)\\
&\leq & \frac{k}{2r-k+2}(n-1 + D(G, X^*)),
\end{eqnarray*} a contradiction.
\end{proof}

\section{Large complete bipartite subgraphs in extremal graphs}

We will need three more lemmas to be used later in the Proof of Theorem~\ref{main}.

\begin{definition}\label{uupath}For a set $U$ of vertices in a graph $G$, we say a {\em $U,U$-path} is a path whose ends are in $U$ and all internal vertices are not in $U$.
\end{definition}

We will use several times the following simple property of $2$-connected graphs.
\begin{proper}\label{pr1}
Let $G$ be a $2$-connected graph, $U\subset V(G)$ with $|U|\geq 2$, and $xy$ be an edge in $E(G)$ such that
$\{x,y\}\not\subseteq U$. Then there is a $U,U$-path $P$  containing $xy$.
\end{proper}

%\section{$c_1+c_2$}
%A bipartite graph $G$ is {\em $k$-saturated}, if $c(G)<2k$, but after adding to $G$ any edge $xy$ connecting distinct parts
%such that $xy\notin E(G)$, the new graph has a cycle of length at least $2k$.

\begin{lem}\label{c1} Let $m,n\geq k\geq 4$ be positive integers.
Let $G=(X,Y;E)$ be a  bipartite $2$-connected graph with $|X|=m$, $|Y|=n$ and $c(G)<2k$.
Suppose $G$   contains a copy $K$ of $K_{k-1,k-2}$ with parts $A\subset X$ and $B\subset Y$ such that 
$|A|=k-1$. Then 
\begin{equation}\label{le1}
| N(Y-B)|=2\quad\mbox{or}\quad |N(Y-B)\cap A|\leq 1.
\end{equation}
\end{lem}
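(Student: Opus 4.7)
The plan is to argue by contradiction: suppose $|N(Y-B)|\neq 2$ and $|N(Y-B)\cap A|\ge 2$, and derive a cycle of length at least $2k$ in $G$, contradicting $c(G)<2k$. Since $|Y-B|\ge n-(k-2)\ge 2$ and $G$ is $2$-connected, every $y\in Y-B$ has degree at least $2$, so $|N(Y-B)|\ge 2$; combined with the hypothesis $|N(Y-B)|\ne 2$, we get $|N(Y-B)|\ge 3$. Fix distinct $a_1,a_2\in A\cap N(Y-B)$, let $y_i\in N(a_i)\cap(Y-B)$ for $i=1,2$, and fix a third attachment $u\in N(Y-B)\setminus\{a_1,a_2\}$ with $y_u\in N(u)\cap(Y-B)$.

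The main structural tool is that between any two distinct vertices of $A$, the complete bipartite graph $K=K_{k-1,k-2}$ contains a Hamilton path of length $2k-4$ (using all $2k-3$ vertices of $K$). I split into two cases based on whether $y_1=y_2$. In Case~I ($y_1\ne y_2$), I consider $P=y_1\,a_1\,P_K\,a_2\,y_2$, where $P_K$ is a Hamilton $(a_1,a_2)$-path in $K$, so $|P|=2k-2$; any $(y_1,y_2)$-path $Q$ in $G$ internally disjoint from $V(P)=(A\cup B)\cup\{y_1,y_2\}$ has length at least $2$ by bipartite parity, and $P\cup Q$ is a cycle of length at least $2k$. I produce $Q$ from the third attachment by applying Property~\ref{pr1} to the edge $uy_u$ with $U=A\cup B$, which yields an external $U,U$-path through $u,y_u$ whose endpoints can be spliced back to $y_1$ and $y_2$ via $2$-connectivity without reusing vertices of $K$. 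In Case~II ($y_1=y_2=y$), the cycle $C_0=y\,a_1\,P_K\,a_2\,y$ has length $2k-2$, and I extend it to length $2k$ by inserting a length-$3$ detour of the form $a_i\,y'\,x'\,b_j$ (with $y'\in Y-B$, $x'\in X-A$) in place of some edge $a_ib_j$ of $P_K$; the detour is produced from Property~\ref{pr1} applied to an external edge involving $(u,y_u)$.

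The main obstacle is the sub-case analysis needed to guarantee the correct endpoints for the external $U,U$-paths supplied by Property~\ref{pr1}. These sub-cases depend on whether $u\in A$ or $u\in X\setminus A$ and on how $y_u$ relates to $y_1,y_2,y$. The role of the hypothesis $|N(Y-B)|\ge 3$ is exactly to provide the external freedom needed in each sub-case; when $|N(Y-B)|=2$, all external connections are routed through a two-vertex cut and neither the closing path $Q$ nor the detour can be built, which is precisely why the alternative conclusion $|N(Y-B)|=2$ is permitted by the lemma.
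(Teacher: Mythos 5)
Your proposal has a genuine gap: the two constructions you describe do not always produce the required cycle, and a concrete example shows both failing simultaneously.

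In Case~I you need a $(y_1,y_2)$-path $Q$ that is internally disjoint from $A\cup B\cup\{y_1,y_2\}$. That forces $Q$ to leave $y_1$ through some neighbor outside $A\cup B$, but nothing rules out $N(y_1)\subseteq A$ and $N(y_2)\subseteq A$, in which case no such $Q$ exists. In Case~II you need a length-$3$ detour $a_i\,y'\,x'\,b_j$ with $y'\in Y-B$ and $x'\in X-A$; this requires some $y'\in Y-B$ with a neighbor in $X-A$, which again is not guaranteed. Property~\ref{pr1} returns a path whose \emph{endpoints lie in $A\cup B$}, not at $y_1,y_2$, so the ``splice back to $y_1$ and $y_2$'' step is not well-defined and cannot be carried out in general. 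Take $k=4$ with $A=\{a_1,a_2,a_3\}$, $B=\{b_1,b_2\}$, and add vertices $p\sim a_1,a_2$; $q\sim a_1,a_3$; $x'\sim b_1,b_2$. This $G$ is $2$-connected, $|X|=|Y|=4$, $N(Y-B)=\{a_1,a_2,a_3\}$ so $(\ref{le1})$ fails, yet every choice of $(a_1,a_2,y_1,y_2)$ in your scheme leads to a Case~I with no valid $Q$ (both $p$ and $q$ have all neighbors in $A$) or a Case~II with no valid detour (neither $p$ nor $q$ has a neighbor in $X-A$). A cycle of length $2k$ does exist here, namely $p\,a_1\,q\,a_3\,b_1\,x'\,b_2\,a_2\,p$, but it is of a different shape: both $p$ and $q$ sit \emph{between two $A$-vertices} rather than at the ends of a Hamilton path through $K$ plus an external closing arc. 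Your cycle template is too rigid to capture it.

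The paper's proof avoids this by first establishing the structural observation that every $(A\cup B, A\cup B)$-path contains at most one vertex of $Y-B$ (else it already closes up into a cycle with at least $k$ vertices in $Y$). This bounds the length of all attachments to $K$ to be very short, and the proof then distinguishes cases according to whether some $y\in Y-B$ has two $A$-neighbors, whether $V(G)-A-B-y$ contains an edge, or whether $|N(y)\cap A|\le 1$ for all $y\in Y-B$; in the independent-set subcase it shows a cycle through $B\cup\{y_1,y_2\}$ exists unless the neighborhoods in $A$ collapse to a fixed pair $\{a_1,a_2\}$, which is exactly the conclusion $|N(Y-B)|=2$. This counting-style observation, rather than a fixed cycle template, is the key ingredient your plan is missing.
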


{\em Proof.} Suppose that $G=(X,Y;E)$ is a  bipartite $2$-connected 
 graph with $|X|=m\geq k$, $|Y|=n\geq k$ and $c(G)<2k$ containing a copy $K=(A,B;E_1)$ of 
$K_{k-1,k-2}$ with $|A|=k-1$, $|B|=k-2$, $A\subset X$ and $B\subset Y$. Suppose further that~(\ref{le1}) does not hold, i.e., that
\[
| N(Y-B)|\geq 3\quad\text{and}\quad |N(Y-B)\cap A|\geq 2.\]

First, we remark that
\begin{equation}\label{1n}
  \mbox{\em each $A\cup B, A\cup B$-path in $G$  contains at most one vertex in $Y-B$.}
\end{equation}
Indeed, if an $A\cup B, A\cup B$-path $P$ contains two vertices in $Y-B$, then $G[A\cup B\cup V(P)]$ has a cycle $C$ that
contains $B\cup V(P)$. This $C$ has at least $k$ vertices in $Y$, and hence $|C|\geq 2k$, contradicting $c(G)<2k$.
 This proves~(\ref{1n}).

\medskip
{\bf Case 1:} There is $y_1\in Y-B$ with $|N(y_1)\cap A|\geq 2$. Suppose $N(y_1)\cap A=\{a_1,\ldots,a_q\}$.

{\bf Case 1.1:} $V(G)-A-B-y_1$ has an edge $xy_2$. By Property~\ref{pr1}, there is an $(A\cup B,A\cup B)$-path $P$
containing $xy_1$. Let $P=w_1w_2\ldots w_h$, $x=w_j$ and $y_1=w_{j+1}$ for some $2\leq j\leq h-2$.
By~(\ref{1n}), $y_1\notin P$ and $j=2$. In particular, $w_1\in B$. Let $P'=a_1y_1a_2$. Then
$G[A\cup B\cup V(P)\cup \{y_1\}]$ has a cycle $C$ containing $B\cup P\cup P'$ and hence at least 
$k$ vertices in $Y$. So $|C|\geq 2k$, contradicting $c(G)<2k$.

{\bf Case 1.2:} $V(G)-A-B-y_1$ is an independent set. Then any $y_2\in Y-A-y_1$ has at least two neighbors in $A$. So by the Case 1.1 for $y_2$ in place
of 
$y_1$, $V(G)-A-B-y_2$ is an independent set. Then $V(G)-A-B$ is an independent set. In particular, any $x\in X-A$ has two neighbors in $B$. So, the graph
$G[A\cup B\cup \{y_1,y_2,x\}]$ has no cycle containing $B\cup \{y_1,y_2\}$ only if $q=2$ and $N(y_2)\cap A=\{a_1,a_2\}$. Trying each
$y\in Y-B-y_1$ as $y_2$, we conclude that $\bigcup_{y\in Y}N(y)=\{a_1,a_2\}$, so~\eqref{le1} holds.

\medskip
{\bf Case 2:}  $|N(y)\cap A|\leq 1$ for every $y\in Y-B$. Because $|N(Y-B) \cap A| \geq 2$, there are distinct $a_1,a_2\in A$ adjacent to $Y-B$. Let $a_1y_1,a_2y_2\in E(G)$ where $y_1,y_2\in Y-B$.
By the case, $y_2\neq y_1$. By Property~\ref{pr1}, for $j=1,2$ there is an $(A\cup B,A\cup B)$-path 
$P_j=w_{1,j},w_{2,j},\ldots, w_{h_j,j}$
containing $a_jy_j$. Since $a_j\in A$, we may assume $w_{1,j}=a_j$ and $w_{2,j}=y_j$. By the case $w_{3,j}\notin A$, and
so $h_j\geq 4$. Furthermore, by~(\ref{1n}), $w_{4,j}\in B$, and so $h_1=h_2=4$. If $w_{3,2}=w_{3,1}$, then the path
$a_1,y_1,w_{3,1},y_2,a_2$ contradicts~(\ref{1n}). So, paths $P_1$ and $P_2$ are internally disjoint and have at most one
common end. Thus $G[A\cup B\cup V(P_1)\cup V(P_2)]$ has a cycle $C$ containing $B\cup  V(P_1)\cup V(P_2)$, which
implies $|C|\geq 2k$, contradicting $c(G)<2k$.
\qed

\begin{lem}\label{pcover} Let $H$  be a bipartite graph with parts $A$ and $B$, where $|B| = g\geq 2$. Suppose $H$ has no isolated vertices
 and for each $b \in B$, $d(b) \geq g$. 
Then either (i) $H = K_{g,g}$ or (ii) there exist disjoint paths $Q_1, \ldots, Q_\ell$ such that for each $1\leq i\leq \ell$, $Q_i$ has both ends in $A$, 
and $B \subset V(Q_1 \cup \ldots \cup Q_\ell)$. 
\end{lem}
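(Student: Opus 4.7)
The plan is to construct a spanning subgraph $F \subseteq H$ satisfying $\deg_F(b) = 2$ for every $b \in B$ and $\deg_F(a) \leq 2$ for every $a \in A$. Since every $B$-vertex has degree exactly $2$ in $F$, each connected component of $F$ is either a path with both endpoints in $A$ or an even cycle alternating between $A$ and $B$. If $F$ has no cycle components, its path components serve as the desired $Q_1, \ldots, Q_\ell$, giving case (ii). Otherwise, I will modify $F$ to reduce the number of cycles; when no further reduction is possible, the argument will force $H = K_{g,g}$, giving case (i).

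I would first establish the existence of such an $F$ via a standard max-flow / $b$-matching argument: set up a flow network with source $s$ of capacity $2$ into each $b \in B$, capacity $2$ from each $a \in A$ into sink $t$, and capacity $1$ on each edge $ba \in E(H)$. A max flow of value $2g$ produces the desired $F$, and the min-cut analysis reduces to verifying the Hall-type condition $|N_H(S)| \geq |S|$ for every nonempty $S \subseteq B$. This holds trivially: for any $b \in S$, $|N_H(S)| \geq |N_H(b)| \geq g \geq |S|$.

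Among all such subgraphs, I would then choose $F$ minimizing the number of cycle components. Suppose $F$ contains a cycle $C = b_1 a_1 b_2 a_2 \cdots b_t a_t b_1$. The key claim is that every neighbor $a^*$ of $b_1$ in $H$ satisfies $\deg_F(a^*) = 2$. Indeed, if $\deg_F(a^*) \leq 1$ then $a^* \notin V(C)$ (all $V(C)$-vertices have $F$-degree $2$), so removing edge $b_1 a_1$ from $F$ and adding $b_1 a^*$ preserves the degree constraints, breaks $C$ into a path, and creates no new cycle since $a^*$'s $F$-component is disjoint from $C$. This produces an $F'$ with strictly fewer cycles, contradicting minimality.

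Finally, let $N_F := \{a \in A : \deg_F(a) = 2\}$. The claim gives $N_H(b_1) \subseteq N_F$, so $|N_F| \geq g$; combined with $2|N_F| \leq \sum_{a \in A} \deg_F(a) = \sum_{b \in B} \deg_F(b) = 2g$, this forces $|N_F| = g$ and no $A$-vertex of degree $1$ in $F$. Therefore $F$ has no path components, so every $b \in B$ lies on a cycle of $F$, and applying the same claim to each $b$ yields $N_H(b) = N_F$ for every $b \in B$. Any $a \in A \setminus N_F$ would then have no neighbor in $B$, contradicting the hypothesis that $H$ has no isolated vertices; hence $A = N_F$, $|A| = g$, and $H = K_{g,g}$. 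The main subtlety is verifying that the cycle-swap in the third paragraph does not inadvertently create another cycle, which holds precisely because $a^* \notin V(C)$ forces its $F$-component to be distinct from the one containing $C$.
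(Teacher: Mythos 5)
Your strategy is genuinely different from the paper's. The paper proceeds by induction on $g=|B|$: fix an edge $ab$, apply the inductive hypothesis to $H[(B-b)\cup(N(B-b)-a)]$, and then extend the resulting path system (or the complete bipartite subgraph) to cover $b$ by a short case analysis. You instead build in one shot a spanning subgraph $F\subseteq H$ with $\deg_F(b)=2$ for every $b\in B$ and $\deg_F(a)\le 2$ for every $a\in A$, observe that $F$ decomposes into $A$-to-$A$ paths and alternating cycles, and use a local edge swap together with minimality of the number of cycle components to show that either $F$ is cycle-free (giving (ii)) or $H=K_{g,g}$ (giving (i)). The swap argument and the final degree count ($|N_F|=g$, hence no degree-$1$ vertices in $A$, hence $N_H(b)=N_F=A$ for all $b$) are correct. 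This is a nice direct argument that avoids the induction.

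There is, however, a genuine gap in the very first step, the existence of $F$. For the flow network you describe, the max flow has value $2g$ if and only if for every $S\subseteq B$ one has $\sum_{a\in A}\min\{2,\,e_H(a,S)\}\ge 2|S|$; the minimizing cut for a given $S$ places on the source side exactly those $a$ with $e_H(a,S)>2$. This is \emph{not} Hall's condition $|N_H(S)|\ge|S|$: e.g., if $S=\{b_1,b_2\}$ has $N_H(S)=\{a_1,a_2\}$ with $b_2a_2\notin E(H)$, then $|N_H(S)|=|S|$ but $\sum_a\min\{2,e_H(a,S)\}=3<4=2|S|$. (That example violates $d(b_2)\ge g$, but it shows Hall alone does not give the min-cut bound.) The correct condition does hold under the lemma's hypotheses, but verifying it requires its own argument: for $|S|\le 2$ every $a$ has at most $|S|\le 2$ neighbors in $S$, so the sum equals the number of $S$-incident edges $\ge g|S|\ge 2|S|$; for $|S|\ge 3$, letting $n_{\ge 2}$ count the $a$'s with $e_H(a,S)\ge 2$ and $n_1$ those with exactly one, one has $n_1\ge (g-n_{\ge 2})|S|$ and a short calculation using $n_{\ge 2}\le |S|\le g$ gives $n_1+2n_{\ge 2}\ge 2|S|$. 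As written, your proof asserts an incorrect min-cut criterion, so the existence of $F$ is not actually established; this step needs repair before the remainder (which is sound) can be invoked.
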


\begin{proof}
We proceed by induction. If $g = 2$ and $H \neq K_{2,2}$, then $H$ contains either a $P_5$ or two disjoint copies of $P_3$, both of which satisfy (ii). 
Now let $g > 2$. Fix $ab \in E(G)$ such that $a \in A, b \in B$. Set $B' = B -\{b\}$ and $A' = N(B') - \{a\}$. Then $H' = H[B' \cup A']$ satisfies the 
conditions of the lemma
%induction  hypothesis
 for $g-1$.

Suppose first that $H' = K_{g-1, g-1}$. Then because each vertex $b' \in B'$ has exactly $g-1$ neighbors in $H'$ and at least $g$ neighbors in $H$,
 $b'a \in E(H)$ for each $b'\in B'$. 
If $b$ has no  neighbors outside $A' \cup \{a\}$, then $G = K_{a, a}$. Otherwise, if $b$ has a neighbor $a' \in A-A' - \{a\}$, 
we may take any path $P$ with $2g$ vertices starting with $a$ and covering  $A' \cup B'$  and append the edge $ba'$ to $P$.

If $H' \neq K_{g-1, g-1}$, then let $Q'_1, \ldots, Q'_{q}$ be the set of paths satisfying (ii) for $H'$.
 If $b$ has a neighbor $a' \in A - \{a\}-\bigcup_{i=1}^q V(Q'_i)$, then we take the set of paths $Q'_1, \ldots, Q'_{q}, aba'$.
 Otherwise, all neighbors of $b$ are in $N(B') +a$. In particular, $b$ has at least $g-1$ neighbors distinct from $a$. 
But each $Q'_i$ has fewer internal vertices in $A$ than in $B$.
Thus paths $Q'_1, \ldots, Q'_{q}$ together have at most $g-2$ internal vertices in $A$. 
 Thus $b$ has a neighbor $a'$ that is an end of a path, say of $Q'_{q}$. Then we append the path $a',b,a$ to $Q'_{q}$.
\end{proof}

\begin{lem}\label{nl2}Let $G=(X,Y;E)$ and $X^* \subseteq X$ be a counterexample to Theorem~\ref{main} with minimum $|X|$.
Then $G$ cannot contain a complete bipartite subgraph $G'=K_{s,t}$ with parts $A\subseteq X^*$ and $B\subseteq Y$ such that
\begin{equation}\label{j14}
 \mbox{$|A|=t\geq k$ and $|B|=s$ with $k/2 \leq s \leq k-2$.}
\end{equation}
\end{lem}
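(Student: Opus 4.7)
The plan is to suppose for contradiction that such a $K_{s,t}$ exists in the minimum counterexample $G$, and to delete a suitably chosen vertex $a \in A$ so that $G - a$ inherits the hypotheses of Theorem~\ref{main}; inducting on $G - a$ via the minimality of the counterexample, together with an arithmetic computation exploiting the recovered deficiency $D_G(a)$, will produce a contradiction. The natural target is a vertex $a \in A$ with $N_G(a) = B$, since then $d_G(a) = s \leq k - 2$ and the deficiency $D_G(a) = r - s \geq r - k + 2$ is as large as possible.

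Assuming such an $a$ exists, I would first check that $G - a$ is still $2$-connected. If not, some pair $\{a, v\}$ would be a $2$-cut of $G$; but then $(A - a) \cup B$ (with $v$ removed if $v \in A \cup B$) induces a complete bipartite subgraph with parts of sizes $\geq k - 1 \geq 3$ and $\geq s - 1 \geq 1$, and so lies entirely in one component $M$ of $G - \{a, v\}$. For $\{a, v\}$ to be a minimal cut, $a$ would need a neighbor in some other component $C$, but $N_G(a) = B \subseteq V(M) \cup \{v\}$ rules this out. Applying Theorem~\ref{main} inductively to $(G - a, X^* - a)$ then gives $(|X^*| - 1)(2r - k + 2) \leq k(n - 1 + D(G, X^*) - (r - s))$, since removing $a$ (whose edges go only to $Y$) does not affect any other $X$-vertex degree. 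Combining this with the counterexample inequality $|X^*|(2r - k + 2) > k(n - 1 + D(G, X^*))$ yields $k(r - s) < 2r - k + 2$, equivalently $r(k - 2) < k(s - 1) + 2$. But for $r \geq k + 1$ and $s \leq k - 2$ one has $r(k - 2) \geq (k+1)(k-2) = k^2 - k - 2 > k^2 - 3k + 2 \geq k(s - 1) + 2$, a contradiction. (The degenerate boundary $|X| = k$, which forces $X = A$, is easily dispatched: $2$-connectedness together with $|Y| \geq k > s$ forces some $y \in Y - B$ with a neighbor in $A$, so the argument reduces to the alternative case below.)

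The hardest step is the residual subcase where no such $a$ exists, so that every vertex of $A$ has at least one neighbor in $Y - B$. For $s = k - 2$, any $(k - 1)$-subset $A^*$ of $A$ together with $B$ is a copy of $K_{k-1, k-2}$, and Lemma~\ref{c1} gives the dichotomy $|N(Y - B) \cap A^*| \leq 1$ or $|N(Y - B)| = 2$; varying $A^*$ in the first branch yields $|N(Y-B) \cap A| \leq 1$, and the second branch is similarly restrictive, both forcing all but at most two vertices of $A$ to have $N = B$ and contradicting the subcase. For $s < k - 2$, one must first build a $K_{k-1,k-2}$ in $G$ by adjoining to a $(k-3)$-subset of $B$ a well-chosen vertex $y \in Y - B$ that has at least $k - 1$ neighbors in $A$; the existence of such a $y$ is forced by double-counting the $\geq t$ edges from $A$ to $Y - B$ guaranteed by the subcase hypothesis, and then Lemma~\ref{c1} closes the case as before, with Property~\ref{pr1} available to construct the required $A\cup B, A\cup B$-paths if a direct long-cycle argument is needed in the most degenerate parameter ranges.
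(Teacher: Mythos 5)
The deletion-and-induct core of your argument is sound and does appear in the paper --- it is exactly Lemma~\ref{2connind}, which the paper invokes (as Case 1.1 and its analogues) once it has found a vertex $v_1\in A$ with $N_G(v_1)=B$. Your arithmetic is correct: if $a\in A\subseteq X^*$ with $N(a)=B$ and $G-a$ is still $2$-connected and satisfies the size constraints, minimality gives $(|X^*|-1)(2r-k+2)\leq k(n-1+D(G,X^*)-(r-s))$, and with $r\geq k+1$, $s\leq k-2$ this contradicts the counterexample inequality. Your $2$-connectivity check for $G-a$ is also fine.

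The gap is in producing such an $a$. Your only justification comes from Lemma~\ref{c1}, which requires a $K_{k-1,k-2}$ sitting inside $G$ with the $(k-1)$-side in $X$, so it applies directly only when $s=k-2$. For $s<k-2$ (which occurs for all $k\geq 6$, since the hypothesis allows $k/2\leq s\leq k-2$) you claim that the $\geq t$ edges from $A$ to $Y-B$ guaranteed by the subcase hypothesis force, by double-counting, some $y\in Y-B$ with at least $k-1$ neighbours in $A$. This does not follow: nothing bounds $|Y-B|$, so those $t$ edges can be spread out with each $y\in Y-B$ receiving only one. Moreover even if such a $y$ existed, adjoining it to a $(k-3)$-subset of $B$ only yields a $K_{k-1,k-2}$ when $s\geq k-3$; for $s\leq k-4$ you would need several such vertices $y$ whose neighbourhoods in $A$ share a common $(k-1)$-subset, which is a further unjustified step. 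The boundary case $|X|=k$ is also not dispatched by your parenthetical: the existence of a single $y\in Y-B$ with a neighbour in $A$ does not place you in the subcase where \emph{every} vertex of $A$ has a neighbour outside $B$.

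By contrast, the paper avoids having to manufacture a vertex with $N(a)=B$. After maximising $s,t$, it performs a mixed $(k-s,k-s-1)(X^*,Y)$-disintegration of $G$. If the disintegration collapses to the $K_{s,t}$ itself, an edge-counting bound (essentially your arithmetic, but applied to all of $G$ rather than to a single deleted vertex) closes the case. Otherwise, saturation yields a long path $P$ starting in $A$, and a careful three-case analysis on $d_P(v_p)$ (with Lemma~\ref{cycle}, crossing neighbours, Lemma~\ref{pcover}, and Property~\ref{pr1}) either finds a cycle of length at least $2k$ or reduces to the situation where the path's endpoint $v_1\in A$ has $N(v_1)=B$, in which case Lemma~\ref{2connind} applies. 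Your proposal thus captures only the final subcase; the path machinery handling the cases where no vertex of $A$ has $N(a)=B$ is the part your outline does not replace, and for $s<k-2$ that machinery is essential.
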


{\em Proof.} Suppose that such a $K_{s,t}$ exists. We may assume that $s$ and $t$ are largest possible, i.e. each $x\in X-A$ has a nonneighbor in $B$ and
 each $y\in Y-B$ has a nonneighbor in $A$.
 
% Since $c(G)\leq 2k-2$, $s\leq k-1$. Moreover, suppose $s=k-1$.
%Let $a_1\in A$.
%Since $d(a_1)\geq r$, there is $z\in N(a_1)-B$. Since $G$ is $2$-connected, $G-a_1$ contains a path $Q$ from $z$
%to $A\cup B-a_1$. So, if $s=k-1$, then $G[A\cup B\cup V(Q)]$ contains a cycle of length at least $2k$. Thus we may assume
%that
%\begin{equation}\label{j91'}
%\mbox{$s\leq k-2$ (which yields $k\geq 4$, since $s\geq k/2$).}
%\end{equation}

Consider a mixed $(k-s,k-s-1)(X^*, Y)$-disintegration of $G$: we first delete all vertices from $X - X^*$ and then consecutively delete remaining vertices in $X$ if their degrees in the current
graph are
at most $k-s$ and vertices in $Y-B$ if their degrees in the current graph are at most $k-s-1$\footnote{Note that we do not delete
vertices in $A$ even when $s=k/2$ and they have degree $k/2=k-s$ in the current graph.}. Let $G_0$ be the resulting graph. If
$G_0=G'=K_{s,t}$, then by~\eqref{j14},
\begin{equation}\label{j102}
rm^* - D(G, X^*) \leq e(G) \leq st+(m^*-t)(k-s)+(n-s)(k-s-1)=(2s-k)t+m^*(k-s)+(n-s)(k-s-1)
\end{equation}
$$\leq ((2s-k)+(k-s))m^*+(n-s)(k-s-1)=sm^*+(n-s)(k-s-1),$$
and hence \[m^*\leq\frac{(k-1-s)(n-s) + D(G, X^*)}{r-s}<
\frac{(k-1-s)(n-1 + D(G, X^*))}{r-k+s} \]\[\leq \frac{(k-1-(k/2)) (n-1+D(G, X^*)}{r-(k/2)} = \frac{(k-2)(n-1 + D(G, X^*)}{2r-k},\] but $\frac{k-2}{2r-k} < \frac{k}{2r-k+2}$, a contradiction. Thus, suppose $G_0\neq G''$, and the partite sets of $G_0$ are
$A\cup A'$ and $B\cup B'$.

Since $G$ is $2k$-saturated and $G_0$ is not complete bipartite, there exist paths with at least $2k$ vertices both ends of which are in $V(G_0)$ and
at least one end in $A\cup B$. Among such paths choose a path $P=v_1,\ldots,v_p$ with $v_1\in A\cup B$ so that\\
(P1) $p$ is maximum possible,\\
(P2) modulo (P1), $d_G(v_p)$ is maximum, and\\
(P3) modulo (P1) and (P2), $P$ has as many vertices from $A$ as possible.

\medskip
Our first observation is
\begin{equation}\label{j92}
v_1\in A.
\end{equation}
Indeed, if $v_1\in B$, then by (P1), each $a\in A$ is in $P$ and $d_P(v_p)\geq k-s$. Thus by~$t\geq k$ and $s\leq k-2$,
$$d_P(v_1)+d_P(v_p)\geq t+(k-s)\geq k+2.$$
 So by Lemma 1.2, $c(G)\geq 2k$, a contradiction.

By~\eqref{j92} and (P1), 
\begin{equation}\label{j92'}
B\subseteq V(P).
\end{equation}

\medskip
{\bf Case 1:} $d_{P}(v_p)\geq k-s+1$. Then Lemma 2.1 implies
\begin{equation}\label{j93}
d_P(v_1)=s, d_P(v_p)=k-s+1,\; \mbox{$p$ is odd, and $P$ has crossing neighbors $v_{i_1}$ and $v_{i_2}$.}
\end{equation}
Let $C=P[v_1,v_{i_1}]\cup v_{i_1}v_p\cup P[v_{i_2},v_p]\cup v_{i_2}v_1$. By the choice of $G$, $|C|\leq 2k-2$.
Since $N_P(v_1)^-\cap N_P(v_p)^+=\emptyset$, $|N_P(v_1)^-| + | N_P(v_p)^+|\geq k+1$, and
$C$ does not contain only two vertices from $N_P(v_1)^-\cup N_P(v_p)^+$,
\begin{equation}\label{j94}
|C|=2k-2\; \mbox{and each $v_i\in C\cap (A\cup A')$ is in $N_P(v_1)^-\cup N_P(v_p)^+$.}
\end{equation}
By~\eqref{j92'} and~\eqref{j93}, $N_P(v_1)=B$. In particular, $v_2\in B$. If $A\subset V(P)$, then by~\eqref{j14} and $s\leq k-2$, for the path
$P'=P-v_1$ we have $d_{P'}(v_2)+d_{P'}(v_p)\geq |A-v_1|+(k-s+1)\geq (t-1)+3\geq k+2$.
In this case,  by Lemma 1.2, $c(G)\geq 2k$, a contradiction. Thus, there is a vertex $a\in A-V(P)$. So, if
for some $3\leq i\leq p-2$, vertices $v_{i-1}$ and $v_{i+1}$ are in $B$, then the path $P''$ obtained from $P$ by replacing $v_i$
with $a$ has the same length and ends as $P$. Hence (P3) implies that
\begin{equation}\label{j95}
 \mbox{if for some $3\leq i\leq p-2$, vertices $v_{i-1}$ and $v_{i+1}$ are in $B$, then $v_i\in A$.}
\end{equation}

{\bf Case 1.1}: $v_1$ has no neighbors outside of $P$. Thus $D_G(v_1) = r-s \geq r-(k-2)$. As $v_1$ is contained in the $K_{s,t}$ and has no neighbors outside of $B$, it is easy to see that $G - v_1$ is 2-connected. Since $|V(P)| \geq 2k$ (and therefore $|V(P)| \geq 2k+1$ since $|V(P)|$ is odd), $|X - v_1| \geq k$. Furthermore, since $A \subseteq X^*$, $|X^*| \geq k$ and so $|X^* - v_1| \geq k-1$. Applying Lemma~\ref{2connind} yields a contradiction. 

{\bf Case 1.2}: $v_1$ has a neighbor $z \in N(v_1) -V(P)$.  Let $Q=w_1,\ldots,w_j$ be a path from $z=w_1$ to $P-v_1$ in
$G-v_1$. Suppose $w_j=v_h$.  Let $Q'=v_1z\cup Q$.
Since $z\notin V(P)$, $j\geq 2$. We claim that
\begin{equation}\label{j96}
 \mbox{for  $h-4\leq g\leq h-1$,  $v_{g}\notin N(v_p)$.}
\end{equation}
Indeed, otherwise the cycle $P[v_1,v_g]\cup v_gv_p\cup P[v_h,v_p]\cup Q'$ would have at least
$$2k+1-(h-g-1)+(j-1)\geq 2k+1-3+1=2k-1$$ vertices. This contradicts the choice of $G$.

Similarly, we show
\begin{equation}\label{j97}
 \mbox{  $v_{h}\notin P[v_{i_1+1},v_{i_2}]$.}
\end{equation}
Indeed, if $i_1+1\leq h\leq i_2$, then the cycle $P[v_1,v_{i_1}]\cup v_{i_1}v_p\cup P[v_h,v_p]\cup Q'$ would have at least
$|C|+1$ vertices, which means at least $2k$ vertices. 

Also
\begin{equation}\label{j98}
 \mbox{$\{v_{h-2},v_{h-1}\}\cap B=\emptyset$. Since $v_{i_2}\in N_P(v_1)=B$, this yields ${h}\notin \{{i_2+1},{i_2+2}\}$.}
\end{equation}
Indeed if $h-2\leq g\leq h-1$ and $v_g\in B$, then the path
$P[v_{g},v_1]\cup Q'\cup P[v_{h},v_p]$ starts from $v_{g}\in B$  and is longer than
$P$ (because if $g= h-2$ then by parity, $j\geq 3$).

Let $\alpha\in \{0,1\}$ be such that $h-2-\alpha$ is odd.
Then by~\eqref{j98}, $v_{h-2-\alpha}\notin B^-=N_P(v_1)^-$, 
by~\eqref{j96}, $v_{h-2-\alpha}\notin N_P(v_p)^+$, and by~\eqref{j97} and~\eqref{j98},  $v_{h-2-\alpha}\notin P[v_{i_1+1},v_{i_2-1}]$.
But this contradicts~\eqref{j94}.

\medskip
{\bf Case 2:} $d_{P}(v_p)= k-s$ and $p\geq 2k+2$. Then $v_p\in B'$ or $k$ is even.
%% $k-s=s=k/2$ and $v_p\in A$.
%If the latter holds, then because all $s=k-s$ neighbors of $v_p$ in $B$ are in $P$ (by (P1)), $v_{p-1}\in B$. But then $v_1v_{p-1}\in E(G)$ and $G$ has a cycle of length $p-1\geq 2k+1$, a contradiction.
%Thus we may assume  $v_p\in B'$.
 Lemma~\ref{cycle} together with~\eqref{j92} implies
\begin{equation}\label{j99}
d_P(v_1)=s \; \mbox{ and $P$ has crossing neighbors $v_{i_1}$ and $v_{i_2}$.}
\end{equation}

As in Case 1, let $C=P[v_1,v_{i_1}]\cup v_{i_1}v_p\cup P[v_p,v_{i_2}]\cup v_{i_2}v_1$. By the choice of $G$, $|C|\leq 2k-2$.
By~\eqref{j92'} and the definition of $C$, $B\subset V(C)$ and only one vertex  in $N_P(v_p)^+$ is not in $C$.
So since $B\cap N_P(v_p)^+=\emptyset$ and $|B|+ |N_P(v_p)^+|=s+(k-s)=k$,
\begin{equation}\label{j10}
|C|=2k-2\; \mbox{and each $v_i\in C\cap Y$ is in $B\cup N_P(v_p)^+$.}
\end{equation}
By~\eqref{j92'} and~\eqref{j10}, $N_P(v_1)=B$. In particular, $v_2\in B$. Repeating the proof of~\eqref{j95},
% (with $k+2$ in place of $k+3$),
 we derive that it holds also in our case.

Again, as in Case 1.1, if $v_1$ has no neighbors outside of $P$, we obtain $m^* \leq \frac{k}{2r-k+2}(n-1+D(G, X^*))$. So we may assume there exists $z\in N(v_1)-V(P)$ and a path $Q'=v_1,w_1,\ldots,w_j$  from $v_1$ through $w_1=z$ to $P-v_1$ internally disjoint from $P$. 
 Suppose $w_j=v_h$. Then repeating the proofs word by word, we derive that~\eqref{j96},~\eqref{j97} and~\eqref{j98} hold in
our case, as well.

Let $\beta\in \{0,1\}$ be such that $h-1-\beta$ is even.
Then by~\eqref{j98}, $v_{h-1-\beta}\notin B$, 
by~\eqref{j96}, $v_{h-1-\beta}\notin N_P(v_p)^+$, and by~\eqref{j97} and~\eqref{j98},  $v_{h-1-\beta}\notin P[v_{i_1+1},v_{i_2-1}]$.
But this contradicts~\eqref{j10}.

\medskip
{\bf Case 3:} $d_{P}(v_p)= k-s$ and $p= 2k$. We claim that
\begin{equation}\label{j101}
A'=\emptyset\; \mbox{and  $d_{G_0}(b')=k-s$ for each $b'\in B'$.}
\end{equation}
Indeed, if there is $a'\in A'$, then $a'$ has a nonneighbor $b\in B$ and since $G$ is saturated, it contains 
an $(a',b)$-path $P'$ with at least $2k$ vertices. Choose $P'$ to satisfy (P1), (P2), and (P3). By the case, $P'$ has exactly $2k$ vertices, and $A \subseteq V(P')$ otherwise we could extend $P'$. But then $|V(P')| \geq 2|A| + 1 > 2k$, a contradiction.
Similarly, if there is $b'\in B'$ with $d_{G_0}(b')\geq k-s+1$, then $b'$ has a nonneighbor $a\in A$, but any $(a,b')$-path $P'$ with at least $2k$
vertices contradicts the choice of $P$ in our case. This proves~\eqref{j101}.

If $|B'|\leq s$, then by~\eqref{j101}, instead of~\eqref{j102} we have
$$rm^* -D(G, X^*)\leq st+(m^*-t)(k-s)+(n-s)(k-s-1)+s=(2s-k)t+m^*(k-s)+n(k-s-1)-s(k-s-2),$$
which is maximized when $t= m^*$. Because $k-s-2\geq 0$, this yields
$$rm^*\leq ((2s-k)+(k-s))m^*+n(k-s-1) + D(G, x^*)=sm^*+n(k-s-1)+ D(G, X^*),$$
and hence as before, \[m^*\leq\frac{(k-s-1)n + D(G, X^*)}{r-s} \leq \frac{k}{2r-k+2}(n-1 + D(G, X^*)),\] a contradiction.  So suppose $|B'|\geq s+1$.

Recall that $d_A(v) = k-s$ for each $v\in B'$. By   Lemma~\ref{pcover}, either \hspace{1mm} (i)  there exists a set $A_1 \subset A$ with $|A_1|=k-s$  such that  $N_A(v) = A_1$
for each $v \in B'$, or (ii)~there exists  $B''\subset B'$ with $|B''| = k-s$ such that there exists a set of disjoint paths from $A$ to $A$ that covers $B''$.
If (ii) holds,
 then because $G[A \cup B]$ is complete bipartite, we can extend the set of paths to a cycle containing $B \cup B''$. This cycle must have at least $2k$ vertices, a contradiction.

Therefore we may assume that for each $(k-s)$-subset $B''$ of $B'$, we have $B'' \cup (A \cap N(B'')) = K_{k-s,k-s}$.
 Fix any $(k-s)$-subset $B'' \subset B'$. Let $G_2=G[A\cup B\cup B'']$. Since $G_2$ is the union of $K_{s,t}$ and $K_{k-s,k-s}$ with the intersection $A_1$, it has
 the following property: for each $a^*\in A-A_1$, $a_1\in A-a^*$ and  $b\in B\cup B''$,
 \begin{equation}\label{j141}
 \mbox{ $G_2$ has an $(a^,*a_1)$-path with $2k-1$ vertices and  an $(a^*,b)$-path with $2k-2$ vertices.}
\end{equation}
 
  Let $a^* \in A - A_1$. Since $a^* \notin A_1$,  $N_{G_2}(a^*) = B$. If also $N_G(a^*)= B$, i.e., $a^*$ has no neighbors outside of $P$, then again as in Case 1.1, we obtain $|X^*|\leq \frac{k}{2r-k+2}(n-1+D(G, X^*))$. So we may assume $a^*$ has a neighbor, say $y \in Y$, outside of $B \cup B'$. Let $Q = a^*, y, w_1, \ldots, w_\ell$ be a path internally disjoint from $A\cup B\cup B'$ such that $w_\ell \in A\cup B\cup B'$. Such a path exists because $G$ is 2-connected. Note that if  $w_{\ell}\in B\cup B''$, then by parity, $\ell\geq 2$. Then $Q$
  together with a path in $G_2$ satisfying~\eqref{j141} forms a cycle of length at least $2k$. 
\qed

\section{Proof of Theorem~\ref{main} for 2-connected graphs}
Recall the statement of the Main Theorem for bipartite graphs.

{\bf Theorem~\ref{main}}. {\em Let $ k\geq 4$, $r\geq k+1$  and $m,m^*,n$ be positive integers with $n\geq k$, $m \geq m^*\geq  k-1$ and $m\geq k$. 
Let $G=(X,Y;E)$ be a bipartite $2$-connected  graph with parts $X$ and $Y$, where $|X|=m$, $|Y|=n$, and
let $X^*\subseteq X$ with $|X^*|=m^*$.  If $c(G)<2k$, then  \begin{equation}\label{e2}
 m^*\leq  \frac{k}{2r-k+2} (n-1 + D(G, X^*)).
\end{equation}
%or $r = k$, $n \leq k-1$, and $m^* \leq n-k+1 + D(G, X^*)$.}

\begin{proof}

%Every $2$-connected graph has a cycle. So the theorem is trivially true for $k=2$. 

 Let $G = (X,Y;E)$ and $X^* \subseteq X$ be an edge-maximal counterexample with minimum $|X|$. Note that adding edges to $G$ can only decrease the deficiency while $m^*, m,$ and $n$ stay the same. So we may assume that $G$ is $2k$-saturated, i.e., adding any additional edge 
connecting $X$ with $Y$ creates a cycle of length at least $2k$. Therefore, 
\begin{equation}\label{e5}
\mbox{ \em for any nonadjacent 
$x \in X$ and $ y \in Y $, there is an $(x,y)$-path on at least $2k$ vertices.}
\end{equation}

If $n \leq k-1$ then each vertex $x \in X$ has $D_G(x) \geq r - n \geq r-k+1$. Then for $r \geq k+1$, we get
\[\frac{k}{2r-k+2}(n-1+D(G, X^*)) \geq \frac{k}{2r-k+2}(n-1+m^*(r-k+1)) \geq \frac{k}{2r-k+2}m^*(r-k+1) \geq m^*,\] 

where the last inequality holds whenever $k(r-k+1) \geq 2r-k+2$, i.e., whenever $r \geq k + \frac{2}{k-2}$. 

%For the case $n \leq k-1$ and $r=k$, because $D(G, X^*) \geq (k-n)m^*$, we have $m^* \leq n-(k-1) + D(G, X^*)$, as desired.

Thus we may assume from now on that $n \geq k$. 

 Our first claim is:
\begin{equation}\label{e3}
e(G)>\left\lfloor\frac{k-1}{2}\right\rfloor m^*+\left\lceil\frac{k-1}{2}\right\rceil (n-1).
\end{equation}

Indeed, $e(G)\geq rm^* - D(G, X^*)$. So, if~\eqref{e3} fails and $k$ is odd, then $rm^* - D(G,X^*)\leq \frac{k-1}{2}(m^*+(n-1))$. Solving for $m^*$, we get
$$m^*\leq \frac{(k-1)(n-1+D(G, X^*))}{2r-k+1}.$$
Since $r\geq k$, this yields~\eqref{e2}, a contradiction to the choice of $G$. So suppose $k$ is even. Then
$rm^* + D(G, X^*)\leq \frac{k}{2}(m^*+(n-1))-m^*$. Solving for $m^*$ and using $k\geq 4$ and $r\geq k$, we get
$$m^*\leq\frac{k(n-1 + D(G, X^*))}{2r-k+2},$$
 and the theorem holds.
This proves~\eqref{e3}.

\medskip
Apply a mixed $(\left\lfloor\frac{k-1}{2}\right\rfloor,\left\lceil\frac{k-1}{2}\right\rceil)(X^*, Y)$-disintegration to $G$, that is, first delete all vertices in $X- X^*$ and then consecutively delete
vertices of degree at most $\left\lfloor\frac{k-1}{2}\right\rfloor$ in $X$ and vertices of degree at most $\left\lceil\frac{k-1}{2}\right\rceil$ in $Y$.
Let $G'$ be the resulting graph 
with parts $A\subseteq X^*$ and $B\subseteq Y$. 
Suppose first that $G'$ is empty. Then at each step of the disintegration process, we lose at most $\left\lfloor\frac{k-1}{2}\right\rfloor$ edges if a vertex in $X^*$ is deleted and at most $\left\lceil\frac{k-1}{2}\right\rceil$ edges if a vertex in $Y$ is deleted. Furthermore, when we arrive to the last $\left\lfloor\frac{k-1}{2}\right\rfloor + \left\lceil\frac{k-1}{2}\right\rceil = k-1$ vertices in the disintegration process, there exists at most $\left\lfloor\frac{k-1}{2}\right\rfloor \cdot \left\lceil\frac{k-1}{2}\right\rceil$ edges. Thus
\begin{eqnarray*}
e(G) &\leq & \left\lfloor\frac{k-1}{2}\right\rfloor m^*+\left\lceil\frac{k-1}{2}\right\rceil n - 
\left\lfloor\frac{k-1}{2}\right\rfloor\left(k-1\right) + 
\left\lfloor\frac{k-1}{2}\right\rfloor \cdot \left\lceil\frac{k-1}{2}\right\rceil \\
& = & \left\lfloor\frac{k-1}{2}\right\rfloor m^*+\left\lceil\frac{k-1}{2}\right\rceil n - 
\left\lfloor\frac{k-1}{2}\right\rfloor^2 
 \leq  \left\lfloor\frac{k-1}{2}\right\rfloor m^*+\left\lceil\frac{k-1}{2}\right\rceil (n-1),
\end{eqnarray*}
contradicting~\eqref{e3}.
% then $|E(G)|\leq \left\lfloor\frac{k-1}{2}\right\rfloor m+\left\lceil\frac{k-1}{2}\right\rceil n$,
% contradicting~\eqref{e3}.
Therefore $G'$ is not empty,
\begin{equation}\label{j142}
\mbox{ \em $d_{G'}(a)\geq 1+ \left\lfloor\frac{k-1}{2}\right\rfloor$ for each $a\in A$, and  $d_{G'}(b)\geq 1+\left\lceil\frac{k-1}{2}\right\rceil$ for each $b\in B$. }
\end{equation}

\medskip
{\bf Case 1:} $G'$ is a complete bipartite graph .
Let $s=\min\{|A|,|B|\}$ and $t=\max\{|A|,|B|\}$. Since  $c(G)\leq 2k-2$, by~\eqref{j142},
\begin{equation}\label{j15} 
\mbox{ $\frac{k}{2}\leq s\leq  k-1$, and  if $s=|A|$, then $s\geq\frac{k+1}{2}$. }
\end{equation}

Moreover, suppose $s=k-1$. Then $G$ contains a $K_{k-1, k-1}$ with parts $A$ and $B'$ where $B' \subseteq B$. Let $u \in G - (A \cup B')$. Such a vertex exists because $m, n \geq k$.  Because $G$ is 2-connected, there exists two internally disjoint paths $P_1$ and $P_2$ from $u$ to $A \cup B'$ such that $P_1$ has endpoints $u$ and $u_1$ and $P_2$ has endpoints $u$ and $u_2$, and these paths only interesect $A \cup B'$ at $u_1$ and $u_2$ respectively. If $|V(P_1 \cup P_2)| \geq 4$, that is, $P_1 \cup P_2$ contains a vertex in $G - (A \cup B')$ other than $u$, then we may find a path $P_3$ in $ A \cup B'$ of length $2k-2$ if $u_1$ and $u_2$ are in different partite sets, or of length $2k-3$ if they are in the same partite set. Then $P_1 \cup P_2 \cup P_3$ yields a cycle of length at least $2k$. Therefore $P_1 \cup P_2 = u_1, u, u_2$. Next let $w$ be a vertex in $G - (A\cup B')$ in the opposite partite set than that of $u$ (again such a vertex exists because $n, m \geq k$). Similarly, all internally disjoint paths $Q_1, Q_2$ connecting $w$ to $A \cup B'$ must be of the form $Q_1 \cup Q_2 = w_1 w w_2$ for some $w_1, w_2 \in A \cup B'$. Thus we may find disjoint paths $R_1$ and $R_2$ partitioning $V(A \cup B')$ such that $R_1$ has endpoints $u_1$ and $w_1$ and $R_2$ has endpoints $u_2$ and $w_2$. Then $P_1 \cup R_2 \cup Q_2 \cup R_1$ yields a cycle of length $2k$, a contradiction. Therefore
%\begin{equation}\label{j91'}
%\mbox{$s\leq k-2$ (which yields $k\geq 4$, since $s\geq k/2$).}
%\end{equation}
$ s \leq k-2$.

{\bf Case 1.1:} $s=|B|$. 
%By~\eqref{e3} and the definition of $G'$, $st>\frac{k-2}{2}t+\frac{k}{2}s$.
%Solving for $t$ and using~\eqref{j15}, we have
%$$t>\frac{ks}{2s-k+2}=\frac{k}{2-\frac{k-2}{s}}\geq \frac{k}{2-\frac{k-2}{k-1}}=k-1.
%$$
%Thus $t\geq k$, and Lemma~\ref{nl2} yields the theorem in this case.

For $k$ odd, by~\eqref{e3} and the definition of $G'$, $st > \frac{k-1}{2}(t + s - 1)$. Solving for $t$ and using~\eqref{j15}, we have
\[t > \frac{(k-1)(s-1)}{2s-k+1} = \frac{k-1}{\frac{2(s-1)}{s-1} + \frac{2}{s-1} - \frac{k-1}{s-1}} = \frac{k-1}{2 - \frac{k-3}{s-1}} \geq \frac{k-1}{2 - \frac{k-3}{(k-2)-1}} = (k-1).\]

For $k$ even, we instead get $st > \frac{k-2}{2}t + \frac{k}{2}(s-1)$ and so
\[t > \frac{k(s-1)}{2s-k+2} = \frac{k}{2 - \frac{k-4}{s-1}},\]
so $t \geq k$ except in the case where $s = k-2$ and $t=k-1$. 
So suppose $|B| = k-2$ and $|A| = k-1$.

By Lemma~\ref{c1}, either $|N(Y - B)| = 2$ or $|N(Y - B) \cap A| \leq 1$. Suppose the first case holds. Let $N(Y-B) = \{x_1, x_2\}$ so that each vertex in $X^* - \{x_1, x_2\}$ has neighbors only in $B$. Without loss of generality, first assume that $x_1 \in X^*$. Then \[D(G, X^*) \geq D_G(x_1) + (|X^*|- 2)(r-k+2).\]  Also $n-1 + D_G(x_1) \geq r-1$. Thus using the fact that $|X^*| \geq k-1$,  we have
%\begin{eqnarray*}\label{y-b}
$$
\frac{k(n-1+D(G, X^*))}{2r-k+2}  - |X^*| \geq \frac{k(r-1 + (|X^*| - 2)(r-k+2))}{2r-k+2} - |X^*|$$
$$
=  \frac{k((|X^*| - 1)(r-k+2)+k-3)}{2r-k+2} - |X^*|
\geq  \frac{k((k-2)(r-k+2)+k-3)}{2r-k+2} - (k-1)
\geq  0.$$
%\end{eqnarray*}
where the last inequality holds whenever $r \geq k + \frac{2}{k(k-4)+2} - 2$. Therefore \[|X^*| \leq \frac{k}{2r-k+2}(n-1+D(G, X^*)),\] a contradiction. The case where $X^*$ contains neither $x_1$ nor $x_2$ is similar (and easier) as we would have $D(G, X^*) = |X^*|(r-k+2)$.

So we may assume that $|N(Y-B) \cap A| \leq 1$ but $|N(Y-B)| \neq 2$. If $|X^*| = k-1$, i.e., $X^* = A$, then all vertices in $X^*$ but at most one have neighbors only in $B$. Then just as in the previous case, we have \[|X^*| \leq \frac{k}{2r-k+2}(n-1+D(G, X^*)).\]
Also, if $|X| = k$, then there is a single vertex $x' \in X - A$. Because $|N(Y-B) \cap A|\leq 1$ and $G$ is 2-connected, all vertices in $Y - B$ must also be adjacent to $x'$. But then $|N(Y-B)| = 2$, a contradiction.

So we may assume that $|X^*| \geq k$ and $|X| \geq k+1$. Fix $x \in A - N(Y-B)$. Then $x$ is contained in a $K_{k-1, k-2}$ subgraph of $G$ and has no neighbors outside of this subgraph. It is easy to see then that $G - x$ is 2-connected. Furthermore, $|X^* - x| \geq k-1$, $|X - x| \geq k$ and $D(G- x, X^* -x ) = D(G, X^*) - (r-k+2)$, contradicting Lemma~\ref{2connind}. This completes the proof that $t=|A| \geq k$. Applying Lemma~\ref{nl2} completes the case. 

\medskip
{\bf Case 1.2:} $s=|A|$. By~\eqref{e3}, $s\geq\frac{k+1}{2}$.
Apply the $(k-s)(X^*, Y)$-disintegration to $G$, and let $G''$ be the resulting graph. If $G'' \neq G'$, then there is some $u\in V(G'')-V(G')$
not adjacent to some $v\in V(G')$ in the other partite set. 
By~\eqref{e5}, $G$ contains a $(u,v)$-path  $P'$ on at least $2k$ vertices.
Choose a path $P''$ of maximum length in $G$ whose both endpoints are in $G''$ and at least one of them in $G'$. 
Let $P''=v_1,\ldots, v_p$ where  $v_1\in V(G')$. In view of $P'$,  $p\geq 2k$.
By the maximality of $P''$, all neighbors of $v_p$  in $G''$ and all neighbors of $v_1$ in $G'$  lie in $P''$. 
So, $d_{P''}(v_1)+d_{P''}(v_p)\geq (k-s+1)+s=k+1$. 
 By Lemma~\ref{cycle}, $v_1$ and $v_p$ are in the same partite set (i.e. $p$ is odd) and have crossing neighbors in $P''$. 
 So, $P''$ satisfies the conditions of Theorem~\ref{cycle1}
 for $\alpha=k-s$, therefore $P''$ is in crossing formation. But this contradicts the Main Lemma.  
Thus $G' = G''$, i.e., everything except for $G'$ is removed in the weaker $(k-s)(X^*,Y)$-disintegration.

Next, we apply a mixed disintegration process to $G - (A \cup B)$ where vertices in $Y-B$ are removed (iteratively) if at the time of deletion they have at most $k-s-1$ neighbors  within $X^*- A$, and vertices in $X* - A$ are removed if they have at most $k-s$ neighbors total. Let $G'''$ be the resulting graph. We claim that also \begin{eqnarray}\label{g'''}G''' = G'.\end{eqnarray} Suppose not. Then there exists a non-edge between $A\cup B$ and $G''' - (A \cup B)$. Among such nonadjacent vertices, choose a pair $v_1 \in A \cup B$, $v_p \in G''' -( A \cup B)$ such that a path $P = v_1, \ldots, v_p$ between them is longest possible.  Thus all neighbors of $v_1$ in $A \cup B$ are in $P$, and all neighbors of $v_p$ in $G''' - (A \cup B)$ are in $P$. 

First observe that if $v_1 \in A$, then by the maximality of $P$, all vertices in $B$ (which are neighbors of $v_1$) appear in $P$. Thus by Lemma~\ref{cycle}, we may find a cycle that contains all of $B$. Such a cycle contains at least $2|B|\geq 2k$ vertices, a contradiction. So we assume $v_1 \in B$. If $v_p \in X- A$, then because $d_P(v_p) \geq k-s+1$, Lemma~\ref{cycle} implies that $G$ contains a cycle of length at least $2(s + (k-s+1) - 1) = 2k$. Therefore $v_p \in Y - B$, and $v_p$ has at least $k-s$ neighbors from $X - A$ in $P$. Since $v_1$ has $s$ neighbors in $A$, we can find a cycle that covers $N_P(v_1) \cup N_P(v_p)$. Note that $N_P(v_1)$ and $N_P(v_p)$ are the same parity. Thus such a cycle has at least $2(k-s + s)$ vertices, a contradiction. Thus proves~\eqref{g'''}.

{\em Case 1.2.1:} $t \geq r$.
For simplicity, let $D' = D(G, X^* - A)$  denote the deficiency of vertices in $X^* - A$. 

For any $v$ be any vertex in $X^* - A$, because $v$ was deleted in the first disintegration, $v$ has at most $k-s$ neighbors in $B$. Thus $v$ has at most $(n-t) + (k-s)$ neighbors. Since $d(v) + D(v) \geq r$, $(n-t) + (k-s) + D(v) \geq r$ which implies that 

\begin{equation}\label{n-t+D}
n-t+D' \geq r-k+s \geq r- \frac{k-1}{2}
\end{equation}

By~\eqref{g'''}, we obtain that $r(m^*-s) +D' \leq (k-s)(m^*-s) + (k-s-1)(n-t)$. Solving for $m^*$, we get \[m^*\leq \frac{k-s-1}{r-k+s}(n-t) + D' + s \leq \frac{k-s-1}{r-k+s}(n-t+D') + s.\]
 We first show that for fixed $r, k, n, t, D'$, the function $f(s):= \frac{k-s-1}{r-k+s}(n-t+D') + s$ is decreasing in $s$. Indeed, taking the first derivative, we have

\[f'(s)  = \frac{-(r-k+s) - (k-s-1)}{(r-k+s)^2} (n-t+D') + 1 = \frac{-(r-1)(n-t+D')}{(r-k+s)^2} + 1.\]
Since $r-1 > r-k+s$ and $n-t + D'> r - k + s$, $ \frac{-(r-1)(n-t)}{(r-k+s)^2} < -1$, therefore it is maximized at $s = \frac{k+1}{2}$.

\begin{eqnarray*}
 m^* &\leq & \frac{(k-\frac{k+1}{2}-1)(n-t) - D'}{r-k+\frac{k+1}{2}} + \frac{k+1}{2} 
 <  \frac{(k-3)(n-t + D')}{2r-k+1} + \frac{k+1}{2}\\
 & = & \frac{(k-1)(n-t+D')}{2r-k+1} - \frac{2(n-t+D')}{2r-k+1} + \frac{k+1}{2}\\
 & \leq & \frac{(k-1)(n-1+D')}{2r-k+1} - \frac{(k-1)(t-1)}{2r-k+1} - \frac{2(r-\frac{k-1}{2})}{2r-k+1}  + \frac{k+1}{2}\\
 & \leq & \frac{(k-1)(n-1+D')}{2r-k+1} - \frac{(k-1)(r-1)}{2r-k+1} + (\frac{k+1}{2} - 1) \\
 & \leq & \frac{(k-1)(n-1+D')}{2r-k+1} - \frac{(k-1)(r-1)}{2r-k+1} + \frac{(k-1)(r-\frac{k-1}{2})}{2r-k+1}
  <  \frac{(k-1)(n-1+D')}{2r-k+1},
\end{eqnarray*}
which is less than $\frac{k}{2r-k+1}(n-1+D(G, X^*))$. 

{\em Case 1.2.2}: $t \leq r$.
For simplicity, let $D = D(G, X^*)$.
We have that $rm^*- D \leq e(G) \leq st + (k-s)(n-t) + (k-s)(m^*-s)$. Solving for $m^*$, we have
\[m^* \leq \frac{s(t-k+s) + (k-s)(n-t) + D}{r-k+s} \leq \frac{s(t-k+s) + (k-s)(n-t) + D}{r-k+s}.\]
Again, it can be shown that this function is decreasing with respect to $s$, and so it is maximized when $s = \frac{k+1}{2}$. Furthermore,  the function is maximized whenever $t$ is as large as possible, i.e., when $ t = r$.  
Therefore

\begin{eqnarray*}
m^* & \leq & \frac{(k-1)(n-r+ D)}{2r-k+1} + \frac{k+1}{2} \\
& = & \left[\frac{k(n-r+D)}{2r-k+2} - \frac{(2r-2k+2)(n-r+D)}{(2r-k+2)(2r-k+1)} \right] + \frac{k+1}{2}\\
& = & \frac{k(n-1+D)}{2r-k+2} - \frac{k(r-1)}{2r-k+2} - \frac{(2r-2k+2)(n-r+D)}{(2r-k+2)(2r-k+1)} + \frac{k+1}{2}\\
&\leq & \frac{k(n-1+D)}{2r-k+2} - \frac{k(r-1)}{2r-k+2} - \frac{(2r-k+2 - k) (r-\frac{k-1}{2})}{(2r-k+2)(2r-k+1)} + \frac{k+1}{2} \\
&= & \frac{k(n-1+D)}{2r-k+2} - \frac{k(r-1)}{2r-k+2} - \frac{1}{2} \left(1-\frac{k}{2r-k+2}\right) + \frac{k+1}{2} \\
&= & \frac{k(n-1+D)}{2r-k+2} - \frac{k(r-1 - \frac{1}{2})}{2r-k+2} + \frac{k}{2} \\
&= & \frac{k(n-1+D)}{2r-k+2} - \frac{k(r-1 - \frac{1}{2})}{2r-k+2} + \frac{k(r-\frac{k-2}{2})}{2r-k+2} 
\leq \frac{k(n-1+D)}{2r-k+2},
\end{eqnarray*}

a contradiction.\footnote{Note that the last inequality holds whenever $k \geq 5$. If $k = 4$, then instead of  $s = (k+1)/2$ we substitute $s = k/2$ and obtain the same inequality in the end.}
\medskip

{\bf Case 2:} $G'$ is not a complete bipartite graph.
  Let $P=u_1,\ldots,u_q$ be a longest path in $G$ whose both ends are in $V(G')$, and subject to this, $\sum_{i=1}^q d_P(u_i)$ is maximized. 
By~\eqref{e5} and the case, $q\geq 2k$.
 By the maximality of $P$, all neighbors of $u_1$ and of $u_q$ in $G'$  lie in $P$.
 
\medskip
 {\bf Case 2.1:} $k$ is odd.
  Then $P$ satisfies the conditions of Theorem~\ref{cycle1}
 for $\alpha=\frac{k-1}{2}$.
%So, $d_P(u_1)+d_P(u_q)\geq k+1$. 
But then $P$ is in saturated crossing formation, contradicting the Main Lemma.

\medskip
{\bf Case 2.2:} $k$ is even. By~\eqref{j142},  $d_{G'}(a)\geq \frac{k}{2}$ for each $a\in A$ and  $d_{G'}(b)\geq\frac{k+2}{2}$ for each $b\in B$.
Since $c(G)\leq 2k-2$, by Lemma~\ref{cycle}, 
\begin{equation}\label{a271}
\mbox{ \em $q$ is odd, $\{u_1,u_q\}\subseteq A$,   $d_{G'}(u_1)=d_P(u_1) \in \{\frac{k}{2}, \frac{k}{2} + 1\}, d_P(u_q)=d_{G'}(u_q) \in \{\frac{k}{2}, \frac{k}{2} + 1\} $.}
\end{equation}

First we show the following claim.
\begin{claim} Path $P$ has a pair of crossing neighbors.\end{claim}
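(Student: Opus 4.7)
The plan is to argue by contradiction. Suppose $P$ has no pair of crossing neighbors. Since $u_1, u_q \in A \subseteq X$ are in the same partite set, Lemma~\ref{cycle}(a) yields a cycle $C$ in $G$ of length at least $2(d_P(u_1) + d_P(u_q) - 1)$ that covers $N_P(u_1) \cup N_P(u_q)$. Combined with $c(G) \leq 2k - 2$ and the degree bounds from~\eqref{a271}, this forces $d_P(u_1) = d_P(u_q) = k/2$ and $|V(C)| = 2k - 2$.

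I would then exploit the fact that $q \geq 2k + 1$ (since $q$ is odd and at least $2k$) strictly exceeds $|V(C)| = 2k - 2$, so at least three vertices of $P$ lie off $C$. Setting $t_0 = \max\{i : v_i \in N_P(u_1)\}$ and $u = \min\{i : v_i \in N_P(u_q)\}$, the absence of crossings gives $t_0 \leq u$, and the vertices of $P$ not on $C$ lie in the ``gap intervals'' bordered by the detour endpoints $v_{s_i}, v_{t_i}$ of the construction used in Lemma~\ref{cycle}(a). Using 2-connectedness of $G$, one can re-route $C$ through such a gap interval---for instance, by replacing the $i$-th detour path $P_i$ by the corresponding $P$-subpath, possibly combined with an additional detour---to obtain a cycle strictly longer than $2k-2$.

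If a direct re-routing does not immediately give a cycle of length at least $2k$, the secondary lever is that $G'$ is not complete bipartite: pick $a^* \in A$ and $b^* \in B$ with $a^*b^* \notin E(G)$, and use $2k$-saturation (equation~\eqref{e5}) to obtain an $(a^*,b^*)$-path of at least $2k$ vertices in $G$. Comparing this path with $P$ via $|V(P)|$-maximality and $\sum_i d_P(u_i)$-maximality should yield additional structural constraints; in particular, any $z \in V(G')\setminus V(P)$ adjacent to $u_2$ must satisfy $d_P(z) \leq k/2 + \mathbf{1}[u_1z \in E(G)]$, and similarly on the other end, which conflicts with the fact that every $b \in B$ has $d_{G'}(b) \geq k/2 + 1$ unless strong restrictions on the embedding of $V(G')\setminus V(P)$ hold.

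The main obstacle will be the detailed case analysis in the re-routing step: depending on the number and lengths of the detour paths $P_i$ in the construction of $C$, on whether the nonadjacent pair $a^*,b^*$ meets $V(P)$, and on which gap interval is used, different constructions are needed. The key insight driving the argument is that the ``gap surplus'' $q - |V(C)| \geq 3$, together with 2-connectedness and the $\sum d_P$-maximality, provides enough flexibility so that at least one re-routing or swap produces a cycle of length $\geq 2k$, contradicting $c(G) < 2k$; hence the assumption that $P$ has no crossing neighbors must fail.
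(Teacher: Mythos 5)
Your opening move is correct and matches the paper's: assuming no crossing neighbors, Lemma~\ref{cycle}(a) together with $c(G)\le 2k-2$ and \eqref{a271} forces $d_P(u_1)=d_P(u_q)=k/2$ and also (since $N_P(u_1)\cup N_P(u_q)$ is covered by a cycle of length $\le 2k-2$) that the last $P$-neighbor of $u_1$ coincides with the first $P$-neighbor of $u_q$, i.e.\ $j_1=j_2$. Up to here the proposal is sound.

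After that, however, the argument is only a heuristic. You write that ``re-routing $C$ through a gap interval'' should give a cycle of length at least $2k$, and that ``the main obstacle will be the detailed case analysis in the re-routing step,'' explicitly acknowledging you do not have the construction. That is precisely the nontrivial part, and the paper's proof is far more than a case analysis of detour replacements. What actually happens in the paper: using $q\ge 2k+1$ and $j_1=j_2$, WLOG $j_1\ge k+2$, so one side of $P$ is long and $u_1$ has an even non-neighbor $u_j$ with $j<j_1$. Then $2$-connectedness of $G-u_{j_1}$ is invoked to produce a bridging path $P_1$ from $P[u_1,u_{j_1-1}]$ to $P[u_{j_1+1},u_q]$, and one shows the endpoint $u_{j_3}$ of $P_1$ is not $u_1$ itself (\eqref{a27}). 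This yields a cycle $C_1$ of length exactly $2k-2$ from which the precise adjacency pattern of $u_1,u_q$ along $P$ is read off (\eqref{a272}, \eqref{a272'}, \eqref{j271}). The crux is then \eqref{u1}: $u_1$ must have a neighbor outside $P$. That sub-proof is itself lengthy — it rules out all positions for $u_2$'s $P$-neighbors, produces a substitute vertex $u_1'$ with $N_G(u_1)\subseteq N_G(u_1')$, establishes $|X^*|\ge k+1$, and invokes the inductive Lemma~\ref{ind}. Only then does a second bridging path $P_2$ starting from $z_1\in N(u_1)\setminus V(P)$ give the final contradiction. None of this is captured by ``replace a detour path and combine with an additional detour.'' Also, the ``secondary lever'' of $2k$-saturation and a nonadjacent pair $a^*,b^*\in V(G')$ plays no role in the paper's proof of this claim; saturation has already been used up in choosing $P$ with $q\ge 2k$. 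So the proposal has the right first step but a genuine gap: it does not supply, even in outline, the mechanism (bridging path through $G-u_{j_1}$, forced structure of $C_1$, and the forced external neighbor of $u_1$ via Lemma~\ref{ind}) that the contradiction actually rests on.
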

\begin{proof}
Suppose not. %By Lemma~\ref{cycle}, $d_{G'}(u_1) = d_{G'}(u_q) = k/2$, otherwise $c(G) \geq 2k$. 
 Suppose the largest index of a neighbor of $u_1$ in $P$ is $j_1$ and 
the smallest index of a neighbor of $u_q$ in $P$ is $j_2$. Since $P$ has no crossing neighbors, $j_1\leq j_2$.
If $j_1<j_2$ or  $d_{G'}(u_1) + d_{G'}(u_q) \geq k+1$, then by the "furthermore" part of Lemma~\ref{cycle}, $G$ has a cycle 
with at least
$ k$ vertices in $B$, a contradiction to $c(G)\leq 2k-2$. Thus $d_{G'}(u_1) = d_{G'}(u_q) = k/2$ and $j_1=j_2$.

By the definition of $P$, $q\geq 2k+1$. By symmetry, we may assume $j_1\geq k+1$. Since $k$ and $j_1$ are even, this
yields
\begin{equation}\label{j27}
j_1\geq k+2\quad \mbox{\em and hence $u_1$ has a nonneighbor $u_j$ for some even $j<j_1$.}
\end{equation}
Since $G$ is $2$-connected, $G-u_{j_1}$ has a path $P_1$ that is internally disjoint from $P$ connecting $P[u_1,u_{j_1-1}]$ with $P[u_{j_1+1},u_q]$.
Among such paths, choose a path $P_1=w_1,\ldots, w_{\ell}$ with $w_1=u_{j_3}$ and $w_\ell=u_{j_4}$ so that $j_3<j_1<j_4$ and
$j_3$ is as small as possible. Let $j_5$ be the smallest index such that $j_5>j_3$ and $u_{j_5}u_1\in E(G)$ and let
 $j_6$ be the largest index such that $j_6<j_4$ and $u_{j_6}u_q\in E(G)$
Since $j_3<j_1=j_2<j_4$, indices
$j_5$ and $j_6$ are well defined and $j_5\leq j_1\leq j_6$.
If $j_3=1$, then by the definition of $j_1$, $\ell\geq 3$ and hence $w_2\in Y-V(P)$. Thus the cycle
$P_1\cup P[u_{j_4},u_q]\cup u_q u_{j_6}\cup P[u_1,u_{j_6}]$ has at least $k$ vertices in $Y$: $N_P(u_1)\cup N_P(u_q)\cup \{w_2\}$. This contradicts
to $c(G)\leq 2k-2$. Therefore
\begin{equation}\label{a27}
j_3\geq 2.
\end{equation}
Cycle $C_1=P[u_1,u_{j_3}]\cup P_1\cup P[u_{j_4},u_q]\cup u_qu_{j_6}\cup P[u_{j_5},u_{j_6}]\cup u_{j_5}u_1$ contains $N_P(u_1)\cup N_P(u_q)\cup V(P_1)$. Since $c(G)\leq 2k-2$, $V(C_1)\cap Y=N_P(u_1)\cup N_P(u_q)$.
In particular, $|V(C_1)| = 2|k/2 + k/2 - 1| = 2k-2$, and
\begin{equation}\label{a272}
\mbox{ \em for every even $2\leq j\leq j_3$ and $j_5\leq j\leq j_1$,  $u_1u_j\in E(G)$,}
\end{equation}
and similarly for $u_q$, 
\begin{equation}\label{a272'}
\mbox{ \em for every even $j_1\leq j\leq j_6$ and $j_4\leq j\leq q-1$,  $u_qu_j\in E(G)$.}
\end{equation}
From~(\ref{j27}) and~(\ref{a272}) we conclude
\begin{equation}\label{j271}
u_1u_{j_5-2}\notin E(G).
\end{equation}
Now we will show that
\begin{equation}\label{u1}
\mbox{ \em $u_1$ has a neighbor outside of $P$.}
\end{equation}

Suppose not. Since $N_P(u_1) \subseteq G'$, we have $u_2 \in G'$. In particular, $u_2$ has at least $\frac{k}{2} + 1$ neighbors in $G'$. 

If $u_2$ has a neighbor $u_r$ in $P$ that is a successor of some neighbor, say $u_s$ of $u_q$, that is, $r = s+1$, then the cycle $u_2 u_{s+1}  \cup P[u_{s+1}, u_q] \cup u_q u_s \cup P[u_s, u_1]$ has length $|V(P)|-1 \geq 2k$, a contradiction. 

Suppose that $u_2$ is adjacent to a vertex in $u_r$ with $j_6 < r <j_4$. 
%Without loss of generality, we could have picked $u_1$ and $u_q$ so that $j_5 - j_3 \geq j_4 - j_6$ (otherwise, just let $P = u_q \ldots u_1$). If $j_4 - j_6 = 2$, $u_r = u_{j_6 + 1}$, a successor of a neighbor of $u_q$, a contradiction. So we may assume $j_4 - j_6 \geq 3$ and therefore $j_5 - j_3 \geq 3$. Note that this implies $u_{j_5 - 2} \notin N(u_1)$.
Then the cycle $C' = u_2u_r \cup P[u_r, u_q] \cup u_q u_{j_6} \cup P[u_{j_6},u_2]$  %includes 
%all vertices in $P$ except for $\{u_1\} \cup \{u_{j_6+1}, \ldots u_{r-1}\}$ where $r-1 < j_4 - 1$. In particular $C'$ 
 contains  $N_P(u_1) \cup N_P(u_q) \cup \{u_{j_5 - 2}\}$. By~(\ref{j27}) this means $C'$ has at least $k$ vertices in $Y$.
 Thus $c(G) \geq 2k$, a contradiction.

Therefore, by~\eqref{a272'}, if $u_2$ has neighbors in $P$, they  appear in $P[u_1, \ldots, u_{j_1-1}]$.
%(and by symmetry, all neighbors of $u_{q-1}$ in $P$ appear in $P[u_{j_2+1}, u_q]$).
 If $u_2$ has a neighbor $u_r$ with $j_3 < r < j_5$, then the cycle $P[u_2, u_{j_3}] \cup P_1 \cup P[u_{j_4} u_q] \cup u_q u_{j_6} \cup P[u_{j_6}, u_{r}] \cup u_r u_2$ is longer than $C_1$ except when $u_r = u_{j_5-1}$. This implies that 

\begin{equation}\label{u2}\mbox{each neighbor of $u_2$ in $P$ is a predecessor of a neighbor of $u_1$.}
 \end{equation}
 Similarly,
\begin{equation}\label{uq-1}\mbox{each neighbor of $u_{q-1}$ in $P$ is a successor of a neighbor uf $u_q$. }
\end{equation}

Since $d_{G'}(u_2) \geq k/2 + 1$, and $d_G(u_1) = k/2$, $u_2$ has at least 1 neighbor outside of $P$ in $G'$. Call this neighbor $u_1'$. By definition of $G'$, $d_G(u_1') \geq k/2$. Let $P' = u_1' \cup P[u_2, u_q]$, where $|V(P')| = |V(P)|$. If $u_1'$ has a neighbor $v \in V(G') - V(P')$, then the path $v\cup P'$ is a longer path with endpoints in $G'$, contradicting the choice of $P$. Therefore, by Lemma~\ref{cycle}, $d_{G'}(u_1') = d_{P'}(u_1') = d_P(u_1) = k/2$, and we may assume $P'$ has no crossing neighbors.  If $u_1'$ has any neighbors in $G$ outside of $P$, then we instead consider the path $P'$ and vertex $u_1'$ and arrive at~\eqref{u1}.

Define the indices $j_1', j_3'$, and $j_5'$ as before in view of $P'$ and $u_1'$. By symmetry, we have $j_1' = j_1$. Because $P' - u_1' = P - u_1$ and by~\eqref{a27}, $j_3' = j_3$. Finally, if $j_5' < j_5$, then $C_2=P[u_1',u_{j_3'}]\cup P_1\cup P[u_{j_4},u_q]\cup u_qu_{j_6}\cup P[u_{j_5'},u_{j_6}]\cup u_{j_5'}u_1'$ is a longer cycle than $C_1$. If $j_5' > j_5$, then $u_1'$ must have less than $k/2$ neighbors in $P'$, a contradiction. Therefore $j_5' = j_5$ and again by symmetry,~\eqref{a272} and~\eqref{a272'} hold for $P'$ and $u'_1$.

Thus $N_G(u_1) \subseteq N_G(u_1')$. 
%If $u_1'$ has neighbors outside of $P'$ then instead of $u_1$ and $P$, we consider $u_1'$ and $P'$, which proves~\eqref{u1}. So suppose $N_G(u_1') = N_P(u_1')$, i.e., $u_1'$ and $u_1$ have the same neighborhoods. We consider $G - u_1$ and show that it is 2-connected. If not, then there exists some cut vertex $v$ of $G- u_1$, and therefore $\{v, u_1\}$ must form a cut set. There must exist some $u, w \in V(G)$ such that each $(u, w)$-path intersects $v$ or $u_1$. if all such paths intersect $v$, then $v$ is a cut vertex of $G$, a contradiction. So let $Q$ be a $(u, w)$-path that intersects $u_1$ but not $v$. If $v \neq u_1'$, then we may replace $u_1$ with $u_1'$ in $Q$ to obtain a path that does not intersect $\{u_1, v\}$, a contradiction. Therefore $u_1' = v$. Since $u_1$ has neighbors only in $P$, both the predecessor and successor of $u_1$ in $Q$ are in $P$. That is, $Q$ intersects $P - u_1$ at least twice. Let $Q = q_1 \ldots q_r \ldots q_s \ldots q_t$ where $q_r$ is the first time $Q$ intersects $P - u_1$ and $q_s$ the last. Then $Q = a_q \ldots q_r \cup P[q_r, q_s] \cup q_s \ldots q_t$ is a $(u,w)$-path that avoids $\{u_1, u_1'\}$, a contradiction.
%Thus $G- u_1$ is 2-connected. Add edges until $G - u_1$ is $2k$-saturated, and let $G'$ be the resulting graph.
Note that $|X^* - u_1| \geq |N(u_2) \cup N(u_{q-1}) | - 1 \geq k/2+1 + k/2+1 - 1 = k+1$, where the last inequality holds because if $u_2$ and $u_{q-1}$ shared a neighbor $v$ (note that it cannot be in $P$ by~\eqref{u2} and~\eqref{uq-1}) then $u_2 vu_{q-1} \cup P[u_2, u_{q-1}]$ is a cycle with $|V(P)| - 1 \geq 2k$ vertices, a contradiction. Applying Lemma~\ref{ind} gives a contradiction, hence~\eqref{u1} holds. 

Let $z_1 \in N_G(u_1) - V(P)$. 
As $G$ is $2$-connected, $G-u_1$ contains a path $P_2=z_1,\ldots, z_m$ from $z_1$ to $V(P\cup P_1)-u_1$.
By~\eqref{a27}, $z_m\in V(P)$, say $z_m=u_{j_7}$. Again by~\eqref{a27}, $j_7\leq j_1$.

If $j_3<j_7\leq j_5$, then the cycle $C_2=P[u_1,u_{j_3}]\cup P_1\cup P[u_{j_4},u_q]\cup u_qu_{j_6}\cup P[u_{j_7},u_{j_6}]\cup 
P_2\cup z_1u_1$ contains at least $k$ vertices from $Y$, namely, $N_P(u_1)\cup N_P(u_q)\cup \{z_1\}$, a contradiction.
Suppose now that $j_7\in \{2i-1,2i\}$ for some $i\in \{2,\ldots,j_3/2\}\cup \{1+j_5/2,\ldots,j_1/2\}$. By~\eqref{a272},
$u_1u_{2i-2}\in E(G)$ and so by~\eqref{a271}, $u_{2i-2}\in V(G')$. But  the path
$P_3=P[u_{2i-2},u_1]\cup u_1z_1\cup P_2\cup P[u_{j_7},u_q]$ is longer than $P$, contradicting the choice of $P$.
Finally, if $j_7 = 2$, then we instead take the path $u_1 z_1 \cup P_2 \cup P[u_2, u_q]$. This proves the claim.
\end{proof}

Let $u_{i_1}$ and $u_{i_2}$ be the first occurring pair of crossing neighbors on $P$.

\begin{claim}\label{u1cycle}If $|X^*| \geq k+1$, then every cycle in $G$ containing $N(u_1)$  also contains $u_1$, and every cycle containing $N(u_q)$  also contains $u_q$. 
\end{claim}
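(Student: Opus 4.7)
The plan is to argue by contradiction; by symmetry I treat only the $u_1$ part. Suppose that $C^*$ is a cycle in $G$ containing $N(u_1)$ but not $u_1$ itself. First, a quick bipartite count: since $u_1\in X$, we have $N(u_1)\subseteq Y\cap V(C^*)$, and $|V(C^*)|\leq 2k-2$ because $c(G)<2k$, so $d(u_1)\leq k-1$.

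The structural heart of the argument is to show that $G-u_1$ is $2$-connected. Suppose instead $v$ were a cut vertex of $G-u_1$. Since $C^*-v$ remains connected (it is either $C^*$ itself or a path), it lies inside a single component $F_1$ of $(G-u_1)-v$, and any other component $F$ satisfies $V(C^*)\cap F=\emptyset$. But $N(u_1)\subseteq V(C^*)$, so $u_1$ has no neighbors in $F$. Hence $\{v\}$ alone separates $F$ from $V(G)\setminus F$ in $G$, contradicting the $2$-connectivity of $G$.

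Now I would split on $d(u_1)$. If $d(u_1)\leq k-2$, the hypotheses $|X|\geq|X^*|\geq k+1$, $|X^*|\geq k$, $|Y|\geq k$ (using the standing assumption $n\geq k$) are met, and $u_1\in X^*$ with $G-u_1$ just shown $2$-connected, so Lemma~\ref{2connind} yields a contradiction immediately. In the remaining subcase $d(u_1)=k-1$, we have $|V(C^*)|=2k-2$ and $N(u_1)$ is exactly $Y\cap V(C^*)$; writing $C^*=y_1x_1\cdots y_{k-1}x_{k-1}y_1$, every edge $u_1y_i$ is present. Picking $u\in X^*\setminus(V(C^*)\cup\{u_1\})$ (such a $u$ exists by $|X^*|\geq k+1$ and $|X\cap V(C^*)|=k-1$) and applying $2$-connectivity to obtain two internally disjoint $u$-to-$V(C^*)$ paths, I would splice them with a long arc of $C^*$ and use $u_1$ as a pivot between two $y_i$'s to produce either a cycle of length $\geq 2k$ (contradicting $c(G)<2k$) or a path strictly longer than $P$ with both endpoints in $G'$ (contradicting the maximality of $P$).

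The main obstacle is this final subcase: the naive length of a cycle pieced together from $C^*$, $u_1$, and $u$ is only $|C^*|+1=2k-1$, one short of the target. I expect the extra vertex to come from exploiting that $q\geq 2k+1$ (so $P$ is strictly longer than $C^*$ and most of $P$ lies outside $C^*$), together with the secondary maximality condition in the choice of $P$, namely that $\sum_i d_P(u_i)$ is as large as possible. The complete star from $u_1$ to $Y\cap V(C^*)$ provides many rerouting options through $u_1$, and at least one of them, combined with the portion of $P$ outside $C^*$, should beat the bound.
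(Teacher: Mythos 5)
Your core argument matches the paper's: you show $G - u_1$ must be $2$-connected (your separation argument, showing that a cut vertex $v$ of $G-u_1$ would separate a component disjoint from $C^*$ by $v$ alone, is a clean reformulation of the paper's observation that $\{v, u_1\}$ would be a cutset of $G$ with a segment of $C^*$ joining two neighbors of $u_1$ across it), and then invoke Lemma~\ref{2connind}. Where you depart from the paper is that you correctly flag that Lemma~\ref{2connind} requires $d(u_1) \le k-2$, whereas the bipartite count on $C^*$ gives only $d(u_1) \le k-1$. The paper's proof of the claim does not address $d(u_1)=k-1$ at all; it simply applies Lemma~\ref{2connind}. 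So you have spotted a genuine omission in the claim's statement and proof as written.

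That omission is, however, vacuous in context, and your attempt to fill it standalone does not succeed. The claim is only ever invoked against cycles built from arcs of $P$ plus a chord, and such a cycle contains $N(u_1)$ only when $N_G(u_1) = N_P(u_1)$; in every such application (Cases~2.2.1 and~2.2.2) the paper has already pinned down $d_P(u_1) = k/2 \le k-2$, so the hypothesis of Lemma~\ref{2connind} is met. Your proposed repair for $d(u_1) = k-1$ --- routing through a vertex $u \in X^*\setminus (V(C^*)\cup\{u_1\})$ and splicing with an arc of $C^*$ --- does, as you note, stall at length $2k-1$, and I do not think the secondary maximality of $P$ supplies the missing vertex here, since $C^*$ is an arbitrary cycle that need not be related to $P$ and you cannot assume $u$ lies on $P$ or has any useful relation to $V(C^*)$. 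As a self-contained proof of the claim as literally stated your proposal is therefore incomplete; as a reconstruction of what the paper actually does and needs, it is correct, and the clean fix is to add the hypothesis $d(u_1) \le k-2$ (always available where the claim is used) rather than to try to force the $d(u_1)=k-1$ case.
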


\begin{proof}
We prove the claim  for $u_1$. The result for $u_{q}$ follows by symmetry. 
%Suppose $u_2 u_r \in E(G)$ where $r \in \{i_1 +1, \ldots, i_2 - 1\}$ and consider the cycle \[C = P[u_2, u_{i_1}] \cup u_{i_1}u_q \cup P[u_q, u_{r}] \cup u_{r}u_2.\] The cycle $C$ contains $N(u_1)$ and does not contain $u_1$. 
Suppose there exists a cycle $C$ that contains $N(u_1)$ but not $u_1$. If $G - u_1$ has a cut vertex $v$, then because $G$ is 2-connected, $\{v, u_1\}$ is a cut set of $G$. Therefore there exist  vertices $u_i, u_j \in N_G(u_1)$ that are in distinct components of $(G - u_1) - v$. Let $P'$ be a segment of $C$ from $u_i$ to $u_j$ not containing $v$. Then $P'$ is a path from $u_i$ to $u_j$ in $(G- u_1) - v$, a contradiction. Therefore $G - u_1$ is 2-connected, contradicting Lemma~\ref{2connind}.
\end{proof}

\begin{claim}Each of $u_1$ and $u_q$  has at least one neighbor outside of $P$.\end{claim}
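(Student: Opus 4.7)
The plan proceeds by contradiction and symmetry: it suffices to show that $u_1$ has a neighbor outside $P$, the argument for $u_q$ being entirely analogous. Suppose to the contrary that $N_G(u_1)\subseteq V(P)$. By \eqref{a271}, $d_{G'}(u_1)=d_P(u_1)$, so in fact every neighbor of $u_1$ lies in $V(G')\cap V(P)$. In particular, $u_2\in V(G')$, and since $u_2\in B$ this forces $d_{G'}(u_2)\geq k/2+1$. The strategy is to extract from $u_2$ a witness vertex $u_1'\in V(G')\setminus V(P)$ satisfying $N_G(u_1)\subseteq N_G(u_1')$ and then invoke Lemma~\ref{ind}, mirroring the proof of \eqref{u1} in the preceding sub-claim but now using the crossing pair $u_{i_1},u_{i_2}$ that has already been produced.

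To produce $u_1'$ I would first restrict the positions of neighbors of $u_2$ inside $V(P)$, using the cycle $C=P[u_1,u_{i_1}]\cup u_{i_1}u_q\cup P[u_q,u_{i_2}]\cup u_{i_2}u_1$ of length at most $2k-2$. If $u_2$ were adjacent to the successor $u_{s+1}$ of some neighbor $u_s$ of $u_q$, the cycle $u_2u_{s+1}\cup P[u_{s+1},u_q]\cup u_qu_s\cup P[u_s,u_2]$ would have length $q-1\geq 2k$, contradicting $c(G)<2k$. If $u_2$ were adjacent to some $u_r$ strictly between the two crossing neighbors, a short rerouting of $P$ through $u_r$ would either yield a path longer than $P$ (contradicting (P1)) or a cycle strictly longer than $C$ with at least $2k$ vertices. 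These exclusions force every neighbor of $u_2$ in $V(P)$ to be a predecessor of a neighbor of $u_1$, so $u_2$ has at most $d_P(u_1)$ neighbors on $P$; since $d_{G'}(u_2)\geq k/2+1$, a pigeonhole argument then produces a neighbor $u_1'\in V(G')\setminus V(P)$ of $u_2$.

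Given $u_1'$, consider the path $P'=u_1'u_2\cup P[u_2,u_q]$, of the same length as $P$ and with both endpoints in $V(G')$. By the maximality of $P$ under (P1) and (P2), all $G'$-neighbors of $u_1'$ lie on $P'$, and rerunning the neighbor-restriction analysis with $u_1'$ in place of $u_1$ yields $N_P(u_1')=N_P(u_1)$. Hence $N_G(u_1)=N_P(u_1)\subseteq N_G(u_1')$ with $u_1'\notin V(P)$. Lemma~\ref{ind} then yields the desired contradiction, since $u_1\in X^*$ has no neighbors outside $P$, $d_G(u_1)\leq k/2+1\leq k-2$ for $k\geq 6$, and the size conditions $|X|\geq k+1$, $|X^*|\geq k$, $|Y|\geq k$ follow from $q\geq 2k+1$ together with the fact that $u_2$ and $u_{q-1}$ have disjoint $X^*$-neighborhoods of size at least $k/2+1$ each. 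The remaining edge case $k=4$ with $d_P(u_1)=k/2+1=k-1$ can be handled separately: the cycle bound $d_P(u_1)+d_P(u_q)\leq k+1$ then forces $d_P(u_q)=k/2=k-2$, so one applies the same argument starting from $u_q$ instead (which does satisfy $d_G(u_q)\leq k-2$), producing a symmetric witness $u_q'$ and contradicting Lemma~\ref{ind} on the $u_q$ side.

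The main obstacle will be the rerouting surgery near the crossing pair in the second paragraph: one must simultaneously avoid producing both a strictly longer path (violating (P1)) and a cycle of length at least $2k$ (violating $c(G)<2k$) for every possible position of a neighbor of $u_2$ between $u_{i_1}$ and $u_{i_2}$, and the argument must be robust enough to be rerun verbatim for $u_1'$ to extract the crucial identity $N_P(u_1')=N_P(u_1)$.
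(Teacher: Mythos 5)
The high-level plan (restrict where $u_2$ can attach, extract a witness $u_1'\in V(G')\setminus V(P)$, then derive a contradiction from the minimality of the counterexample) matches the paper's, but the pivotal step you propose — ``rerunning the neighbor-restriction analysis with $u_1'$ in place of $u_1$ yields $N_P(u_1')=N_P(u_1)$, hence $N_G(u_1)\subseteq N_G(u_1')$'' — does not go through, and in fact the paper establishes the \emph{opposite}. With the crossing pair $u_{i_1},u_{i_2}$ present, $u_1'$ cannot be adjacent to $u_{i_2}$: the cycle $P'[u_1',u_{i_1}]\cup u_{i_1}u_q\cup P'[u_q,u_{i_2}]\cup u_{i_2}u_1'$ would then contain $N(u_1)$ but not $u_1$, which is forbidden (this is exactly Claim~\ref{u1cycle}). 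So the paper derives the \emph{strict} inclusion $N_P(u_1')\subseteq N(u_1)-u_{i_2}$, i.e.\ $N_P(u_1')\subsetneq N_P(u_1)$ — the reverse of what Lemma~\ref{ind} needs. The equality $N(u_1)=N(u_1')$ does hold in the earlier sub-claim~\eqref{u1}, but that argument relies essentially on the absence of crossing neighbors, and it breaks precisely once a crossing pair exists. So you cannot ``mirror'' that proof here: the correct mechanism in the crossing case is to use the strict inclusion to force $d_P(u_1')<d(u_1')$ (so $u_1'$ has a neighbor off $P$) and to feed the cycle $C$ into Claim~\ref{u1cycle}, not to invoke Lemma~\ref{ind}.

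Two secondary points. First, your neighbor-restriction step is too coarse: the paper shows $N_{G'}(u_2)$ sits inside $N(u_1)^-$ \emph{plus one extra vertex} ($u_{i_2-3}$ when $|C|=2k-4$, or the exceptional $u_{i_0}$ when $|C|=2k-2$), and this extra vertex is exactly what makes the bound $d_P(u_2)\leq k/2$ delicate; you have not accounted for the $|C|=2k-4$ vs.\ $|C|=2k-2$ dichotomy of~\eqref{j152} that governs which of these exceptional vertices arises. Second, the $k=4$ handling is incomplete as stated: diverting to $u_q$ only establishes that $u_q$ has a neighbor off $P$, and you still owe the conclusion for the degree-$(k/2+1)$ endpoint; the paper's route via Claim~\ref{u1cycle} and the case analysis on $|C|$ is what supplies this.
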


\begin{proof}
Similarly to the proof of Case~1 of Lemma~\ref{nl2},
let $C=P[u_1,u_{i_1}]\cup u_{i_1}u_q\cup P[u_{i_2},u_q]\cup u_{i_2}u_1$. By the choice of $G$, $|C|\leq 2k-2$.
Since $N_P(u_1)^- \cap N_P(u_q)^+=\emptyset$, $|N_P(u_1)^-|+ |N_P(u_q)^+|\geq k$, and
$C$ does not contain only two vertices in $N_P(u_1)^-\cup N_P(u_q)^+$,
$$2k-2\geq |C|\geq 2k-4\; \mbox{{\em and}  $|(N_P(u_1)^-\cup N_P(u_q)^+)\cap   C\cap A|\geq d_P(u_1)+d_P(u_q)-2$ .}$$
This means
\begin{equation}\label{j152}
\parbox{14.5cm}{\em either $ |C|= 2k-4$ and each $u_i\in C\cap A$ is in $N_P(u_1)^-\cup N_P(u_q)^+$, or
 $ |C|= 2k-2$ and there is at most one $i_0$ such that $u_{i_0}\in (C\cap A)-(N_P(u_1)^-\cup N_P(u_q)^+)$.
}
\end{equation}

%Also by Lemma~\ref{cycle},  $ d_P(u_1)+d_P(u_q)\leq k+1$, and so $d_P(u_1), d_P(u_q) \in \{k/2, k/2 + 1\}$. 

%We will show that again that 
%\begin{equation}\label{u1_2}
%\mbox{$u_1$ has a neighbor outside of $P$.}
%\end{equation}

We will show that $u_1$ has a neighbor in $G'$ outside of $P$. The result for $u_q$ follows by symmetry.

{\em Case 2.2.1: $|C| = 2k-4$}. By Lemma~\ref{cycle}, $d_P(u_1) = d_P(u_q) = k/2$. As before, the vertex $u_2$ is in $G'$ and hence has at least $k/2 + 1$ neighbors in $G'$. Suppose first that all neighbors of $u_2$ in $G'$ are in $P$. As in the previous case, $u_2$ cannot have a neighbor that is a successor of a neighbor of $u_q$. If $u_2$ has a neighbor $u_r$ with $r \in \{i_1+ 1, \ldots, i_2-5\}$ then $u_2u_r \cup P[u_r, u_q] \cup u_qu_{i_1} \cup P[u_{i_1}, u_2]$ is a cycle with length at least $|C| - 1 + 5 \geq 2k$, a contradiction. 
%Also, if $u_{i_2 - 1} \in N(u_2)$, let $u_\ell$ be the first neighbor of $u_q$ on $P$. Then $u_{\ell-2} \in N(u_1)$ since $u_{\ell -3} \notin N_P(u_q)^+$. Therefore the cycle \[u_2u_{i_2-1} \cup P[u_{i_2-1}, u_\ell] \cup u_\ell u_q \cup P[u_q, u_{i_2}] \cup u_{i_2} u_1 \cup u_1 u_{\ell-2} \cup P[u_{\ell -2},u_2]\] contains $|V(P)|-1$ vertices, a contradiction.

Thus every neighbor of $u_2$ in $G'$ is  in $N(u_1)^- + u_{{i_2} - 3}$. Also by symmetry $N_P(u_{q-1}) \subseteq N(u_q)^+ + u_{i_1 + 3}$, so $N_P(u_2)$ and $N_P(u_{q-1})$ intersect in at most one vertex. As before, $u_2$ and $u_{q-1}$ cannot share a neighbor outside of $P$. Therefore 
\begin{equation}\label{bigX}|X^*| \geq |N_{G'}(u_2) \cup N_{G'}(u_{q-1})| \geq (k/2 + 1) + (k/2 + 1) - 1 \geq k+1.
\end{equation}
If $u_{i_2-1} \in N(u_2)$, then the cycle $P[u_2, u_{i_1}] \cup u_{i_1} u_q \cup P[u_q, u_{i_2 - 1}] \cup u_{i_2-1}u_2$ contradicts Claim~\ref{u1cycle}. So $d_P(u_2) \leq |N(u_1)^- + u_{i_2-3} - u_{i_2-1}| = k/2$, hence $u_2$ has a neighbor $u_1'$ in $G'$ outside of $P$. 
%%If $u_1'$ has a neighbor outside of $P$, then we consider instead the path $P' = u_1' \cup P[u_2, u_q]$ and obtain our result. Otherwise, all neighbors of $u_1'$ are in $P$. If $u_{i_2 - 2} \in N(u_1')$, then the cycle $C'=u_1' u_{i_2 - 2} \cup P'[u_{i_2 - 2}, u_q] \cup u_{q}u_{i_1} \cup P'[u_{i-1}, u_1']$ has length $|C| + 2 = 2k-2$, and we instead consider $C'$ and move onto the case where $|C'| = 2k-2$.
%
%Similarly, if $u_1'$ has any neighbors within $\{u_{i_1+1}, \ldots u_{i_2-4}\}$ then we obtain a cycle of length $2k$ or longer. Thus again by~\eqref{j152} and since $d(u_1') \geq d(u_1)$, we must have $N(u_1') = N(u_1)$. By~\eqref{bigX}, we may apply Lemma~\ref{ind} to complete the case.
If $u_1'$ is adjacent to some vertex $u_r$ with $r \in \{i_1+2, \ldots, i_2\}$, then the cycle $P'[u_1', u_{i_1}] \cup u_{i_1}u_q \cup P'[u_q, u_r] \cup u_ru_1'$ contradicts Claim~\ref{u1cycle}. So by~\eqref{j152}, $N_P(u_1') \subseteq N(u_1) - u_{i_2}$, and hence $d_P(u_1') \leq k/2 - 1 < d(u_1')$, so $u_1'$ has a neighbor outside of $P$. Then again we consider $P'$ and $u_1'$, and complete the case. 

{\em Case 2.2.2: $|C| = 2k-2$}. Assume $N_{G'}(u_1)$(and $N_{G'}(u_q)$) $\subseteq V(P)$. Suppose first that $d_P(u_1) = k/2$. 
%If $C \cap A = P^-(N(u_1)) \cup P^+(N(u_q))$, then a similar argument as in Case 1 shows that either $u_1$ has a neighbor outside of $P$ or $|X^*| \leq \frac{k}{2r-k+1}(n-1+D(G, X^*))$.
As before, we will show that $u_2$ must have a neighbor $u_1'$ in $G'$ outside of $P$. So suppose first that $u_2$ has no such neighbors.

If it exists, let $u_{i_0}$ be the unique vertex in $C \cap A$ which is not contained in $N(u_1)^- \cup N(u_q)^+$. Note that in this case, $C \cap A$ contains all vertices in $N(u_1)^- \cup N(u_q)^+ - u_{i_1+1} - u_{i_2-1} + u_{i_0}$. In particular, $|N(u_1)^- \cup N(u_q)^+ - u_{i_1+1} - u_{i_2-1} + u_{i_0}| \leq k-1$ if and only if $d(u_1) = d(u_q) = k/2$.  

If $u_2$ is adjacent to a vertex $u_r$ with $r \in \{i_1 + 1, \ldots, i_2 - 3\}$, then the cycle $P[u_2, u_{i_1}] \cup u_{i_1}u_q \cup P[u_q, u_{r}] \cup u_ru_2$ has at least $|C| + 2 \geq 2k$ vertices. Therefore $N_{G'}(u_2) \subseteq N(u_1)^- + u_{i_0}$. We obtain a similar result for $u_{q-1}$ by symmetry, and again we get $|X^*| \geq k+1$.

% If equality holds, then in particular $u_{i_2 - 1} \in N(u_2)$. Let $u_\ell$ be the first neighbor of $u_q$ in $P$. If $u_{\ell - 2} \in N(u_1)$, then the cycle $u_2u_{i_2-1} \cup P[u_{i_2-1}, u_\ell] \cup u_\ell u_q \cup P[u_q, u_{i_2}] \cup u_{i_2} u_1 \cup u_1 u_{\ell-2} \cup P[u_{\ell -2},u_2]$ has length at least $|V(P)| - 1 \geq 2k$. 
%
%Otherwise, the cycle $C = P

%
%
%If $u_2$ has a neighbor $u_r$ with $i_1 < r< i_2$ then as in the previous case, set $u_\ell$ to be the first neighbor of $u_q$ in $P$. Then either \[u_2u_{i_2-1} \cup P[u_{i_2-1}, u_\ell] \cup u_\ell u_q \cup P[u_q, u_{i_2}] \cup u_{i_2} u_1 \cup u_1 u_{\ell-2} \cup P[u_{\ell -2},u_2]\] or \[P[u_2, u_{i_1}] \cup u_{i_1}u_q \cup P[u_q, u_r] \cup u_ru_2\] has length at least $\min\{|V(P)| - 1, |C| + 2\}$, a contradiction. So we obtain $N(u_2) \subseteq N(u_1)^+ - u_{i_2-1} + u_{i_0}$. But $| N(u_1)^+ - u_{i_2-1} + u_{i_0}| = k/2 < d_{G'}(u_2)$  (again we note that $u_{i_0}$ exists only if $d(u_1) = k/2$). 

We also have that $u_2u_{i_2 - 1} \notin E(G)$, otherwise the cycle $P[u_2, u_{i_1}] \cup u_{i_1} u_q \cup P[u_1, u_{i_2-1}] \cup u_{i_2-1} u_2$ contradicts Claim~\ref{u1cycle}. Therefore $d_P(u_2) \leq k/2$. This implies $u_2$ has a neighbor $u_1'$ in $G'$ outside of $P$.
If $u_1'$ has a neighbor outside of $P$, then we instead consider the path $P' = u_1' \cup P[u_2, u_q]$ and are done. 
%If $u_1'$ has a neighbor in $u_r$ with $r \in \{i_1+1, \ldots, i_2-1\}$, then the cycle $P'[u_1', u_{i_1}] \cup u_{i_1}u_q \cup P'[u_q, u_r] \cup u_ru_1'$ is longer than $C$. Therefore every neighbor of $u_1'$ is in $C$, but $u_1'$ is not. As in the proof of Claim~\ref{u2uq-1}, we have that $G - u_1'$ is 2-connected, a contradiction to Lemma~\ref{2connind} (applied to $u_1'$ and $P'$). 
%Next suppose $d_P(u_1) = k/2+1$. In this case we have that each $u_i\in C\cap A$ is in $P^-(N_P(u_1))\cup P^+(N_P(u_q))$ and $|C| = 2k-2$. As in the previous case, $u_2$ cannot have a neighbor $u_r$ where $i_1<r<i_2$. Therefore $N_P(u_2) \subseteq P^-(N(u_1)) - u_{i_2 - 1}$. Since $d(u_1) \geq k/2 + 1 >  |P^-(N(u_1)) - u_{i_2 - 1}|$, we have that $u_2$ has a neighbor $u_1'$ in $G'$ outside of $P$. Then by the case, $N(u_1') \subseteq N(u_1)$, unless $u_1'$ has neighbors outside of $P$ (in which case we obtain~\eqref{u1_2}). Thus applying Lemma~\ref{ind} instead to $P'=u_1' \cup P[u_2, u_q]$ and $u_1'$, we obtain a contradiction.
As in the previous case, $u_1'$ does not have neighbors in $\{u_{i_1+2}, \ldots, u_{i_2}\}$. Hence $N(u_1')$ is contained in $C$ but $u_1'$ is not, contradicting Claim~\ref{u1cycle} (applied to $P'$ and $u_1'$).
 
This completes the proof for $u_1$. By symmetry, we have that $u_p$ also contains a neighbor outside of $P$.
\end{proof}

 For $j=1$ and $j=q$, let $z_j\in N(v_j) - V(P)$.
Since $G$ is $2$-connected, this implies that there is a path $Q_j=w_{1,j},\ldots,w_{\ell_j,j}$  from $u_j$ through $z_j=w_{2,j}$ to $P-u_j$ internally disjoint from $P$.
 Let $w_{\ell,j}=u_{h_j}$. If $Q_1$ and $Q_q$ share a vertex outside of $P$, then $G$ has a cycle containing $P$, a contradiction to $c(G)\leq 2k-2$. So, the only vertex
  common for $Q_1$ and $Q_q$ could be $u_{h_1}$ if it coincides with $u_{h_q}$. Also, $h_1\geq 3$, since if $h_1=2$, then the path $Q_1\cup P[u_2,u_q]$ is longer than $P$.
  Similarly, $h_q\leq q-2$.
 
  We claim that
\begin{equation}\label{j151}
 \mbox{for  $h_1-4\leq g\leq h_1-1$,  $u_{g}\notin N(u_q)$ and for  $h_q+1\leq g\leq h_q+4$,  $u_{g}\notin N(u_1)$ .}
\end{equation}
Indeed, by symmetry suppose $u_gu_q\in E(G)$ for some  $h_1-4\leq g\leq h_1-1$. Then  the cycle $P[u_1,u_g]\cup u_gu_q\cup P[u_q,u_{h_1}]\cup Q_1$ would have at least
$$2k+1-(h_1-g-1)+(j_1-1)\geq 2k+1-3+1=2k-1$$ vertices. This contradicts  $c(G)\leq 2k-2$.

Also
\begin{equation}\label{j152'}
 \mbox{$\{u_{h_1-2},u_{h_1-1},u_{h_q+1},u_{h_q+2}\}\cap  B=\emptyset$.}
\end{equation}
Indeed if $h_1-2\leq g\leq h_1-1$ and $u_g\in B$, then the path
$P[u_{g},u_1]\cup Q_1\cup P[u_{h_1},u_q]$ starts from $u_{g}\in B$  and is longer than
$P$ (because if $g=h_1-2$ then by parity, $\alpha_1\geq 4$). The proof for 
 $h_q+1\leq g\leq h_q+2$ is symmetric.

Similarly to~\eqref{j97}, we show
\begin{equation}\label{j154}
 \parbox{14.5cm}{\em (i) if $|C|=2k-4$, then $u_{h_1}\notin P[u_{i_1+1},u_{i_2-2}]$ and
$u_{h_q}\notin P[u_{i_1+2},u_{i_2-1}]$;\\
 (ii) if $|C|=2k-2$, then $u_{h_1}\notin P[u_{i_1+1},u_{i_2}]$ and
$u_{h_q}\notin P[u_{i_1},u_{i_2-1}]$
.}
\end{equation}
Indeed, if for example,  $i_1+1\leq h_1\leq i_2-2$, then the cycle $P[u_1,u_{i_1}]\cup u_{i_1}u_q\cup P[u_q,u_{h_1}]\cup Q_1$ would have at least
$|C|+3$ vertices, which means at least $2k$ vertices. All other possibilities are very similar.

 Let $\lambda$ be the odd integer
in the set $ \{h_1-3,h_1-2\}$.
 Similarly, let 
 $\mu$ be the  odd integer in the set $ \{h_q+2,h_q+3\}$.
 By~\eqref{j151}, $u_{\lambda}\notin  P^+(N_P(u_q))$.
By~\eqref{j152}, we have the following cases.

First suppose $ |C|= 2k-4$ and each $u_i\in C\cap A$ is in $N_P(u_1)^-\cup N_P(u_q)^+$. 
Since  $N_P(u_1)^-\cap N_P(u_q)^+=\emptyset$ and each of $u_1$ and $u_q$ has $k/2$ neighbors in $B$, this means
\begin{equation}\label{j156}
 \mbox{\em all neighbors of $u_1$ and $u_q$ in $P$ are in $B$.}
\end{equation}
By~\eqref{j152'} and~\eqref{j156},  $u_{\lambda}\notin  N_P(u_1)^-$. So by the case and the fact that $\lambda$ is odd, 
$i_1+1\leq \lambda\leq i_2-1$. This means $i_1+3\leq h_1\leq i_2+2$. Since $u_{i_2}\in B$, by~\eqref{j154}(i) and~\eqref{j152'},
$i_2-1\leq h_1\leq i_2$. Similarly, $i_1\leq h_q\leq i_1+1$.
 Then the cycle
$$P[u_1,u_{h_q}]\cup Q_{q}\cup P[u_q,u_{h_1}]\cup Q_1$$
has length at least $|C|+4$, contradicting $c(G)\leq 2k-2$.

Next, suppose  $ |C|= 2k-2$ and there is exactly one $i_0$ such that $u_{i_0}\in (C\cap A)-(N_P(u_1)^-\cup N_P(u_q)^+)$.
As the case $|C| = 2k-4$ this yields~\eqref{j156}. By~\eqref{j152'} and~\eqref{j156},  $u_{\lambda}\notin  N_P(u_1)^-$. So by the case and the fact that $\lambda$ is odd, 
either $\lambda=i_0$, or $i_1+1\leq \lambda\leq i_2-1$. If the latter holds, then $i_1+3\leq h_1\leq i_2+2$, which is impossible by~\eqref{j154}(ii) and~\eqref{j152'}.
Thus $\lambda=i_0$. Similarly, we conclude $\mu=i_0$. In particular, $h_q<h_1$.
 Since $\lambda=\mu$ is odd, the cycle
$$P[u_1,u_{h_q}]\cup Q_{q}\cup P[u_q,u_{h_1}]\cup Q_1$$
has length at least $|V(P)|-1\geq 2k$, contradicting $c(G)\leq 2k-2$.

Finally, suppose $ |C|= 2k-2$ and each $u_i\in C\cap A$ is in $N_P(u_1)^-\cup N_P(u_q)^+$. 
By Lemma~\ref{cycle}, $d_P(u_1)+d_P(u_q)\leq k+1$. So by the symmetry between $u_1$ and $u_q$, we may assume
$d_P(u_1)=k/2$ and hence $N_P(u_1)=N_{G'}(u_1)$. Since $i_0$ does not exist,
 repeating the argument of Case 2.2.2,  we get a contradiction even earlier.
\end{proof}
}

\section{Proof of Theorem~\ref{EGbgr} for general graphs}

%Everywhere in this section, we assume that positive integers $r,k$ are fixed with $r \geq k+1$, $k\geq 4$.

%
%Recall that a bigraph $G = (X, Y; E)$ is called $2k$-saturated if $c(G) < 2k$ but for any $x \in X, y \in Y$, if $xy \notin E(G)$, then $G + xy$ contains a cycle of length $2k$ or longer.

%We need some definitions. A bigraph $G=(X,Y;E)$
% is
%$2k$-{\em saturated} if $c(G)<2k$, but for each $x\in X$ and $y\in Y$ with $xy\notin E(G)$, the graph $G+xy$ has a cycle of length at least $2k$.
%For example, if $s\leq k-1$ then for any $t$ graph $K_{s,t}$ is $2k$-saturated, because it does not have
%$x\in X$ and $y\in Y$ with $xy\notin E(G)$.
%  A bigraph $G=(X,Y;E)$
% is
%$2k$-{\em block-saturated} if every block of $G$  is $2k$-saturated.

\smallskip
For disjoint vertex sets $X$ and $Y$, an $(X,Y)$-{\em frame} is a pair $(G,X^*)$ where $G$ is a bigraph with parts $X$ and $Y$, and $X^*\subseteq X$.

A block $G'$ in an $(X,Y)$-{frame}  $(G,X^*)$ with parts $X'$ and $Y'$ is {\em special} if all of the following holds:\\
(i) $G'=K_{k-1,r}$ with $|X'|=k-1$;\\
(ii) $X'\subseteq X^*$;\\
(iii) $N_G(x)=Y'$ for each $x\in X'$.

Let $Q(G,X^*)$ denote the number of {special} blocks in an $(X,Y)$-{frame}  $(G,X^*)$. Recall the definition of deficiency:

\smallskip
%If $x$ is a vertex in an $(X,Y)$-{frame}  $(G,X^*)$, then let $D(x)=\max\{0, r-d_G(x)\}$. Let
\[
D(G,X^*) = \sum_{x\in X^*}D_G(x)=\sum_{x\in X^*}\max\{0, r-d_G(x)\}.
\]
The following theorem implies Theorem~\ref{EGbgr}.

\begin{thm}\label{t1}
Let $ k\geq 4$, $r\geq k+1$  and $m,m^*,n$ be positive integers with $m^*\leq m$. %Let $k$ {\bf be even}.
Let $(G,X^*)$ be an $(X,Y)$-{frame},
where $|X|=m$, $|Y|=n$, and
 $|X^*|=m^*$, and $G$ is $2k$-saturated. %For every $x\in X$, let $D(x)=\max\{0, r-d(x)\}$. Let $D=D(G)=\sum_{x\in X^*}D(x)$.
If $c(G)<2k$, then 
\begin{equation}\label{dc1}
m^*\leq \frac{k-1}{r}(n-1+D(G,X^*)+Q(G,X^*)).
\end{equation}

Furthermore, equality holds if and only if $G$ and $X^*$ satisfy the following: \\
(i) $G$ is connected;\\
(ii) all blocks of $G$ are copies of either $K_{k-1, r}$ or $K_{k-1, r+1}$ with the partite set of size $k-1$ in $X$ and all cut vertices of $G$ in $Y$;\\
(iii) $X^* = X$;\\
(iv) $D(G, X^*) = 0$.
\end{thm}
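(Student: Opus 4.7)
The plan is to prove Theorem~\ref{t1} by induction on the number of blocks of $G$, using Theorem~\ref{main} as the core 2-connected estimate. In the base case where $G$ is itself 2-connected, there are two subcases. If $G$ is a special block $K_{k-1,r}$ with $X' \subseteq X^*$ and $N_G(x) = Y'$ for each $x \in X'$, then direct substitution gives $|X^*| = k-1 = \frac{k-1}{r}(n - 1 + D + Q)$, so equality holds. Otherwise $Q(G, X^*) = 0$, Theorem~\ref{main} yields $|X^*| \leq \frac{k}{2r-k+2}(n - 1 + D)$, and the algebraic comparison $\frac{k}{2r-k+2} \leq \frac{k-1}{r}$ (equivalent to $r \geq k-1$, which holds strictly since $r \geq k+1$) gives the required bound; this inequality is strict unless $G \cong K_{k-1, r+1}$, which must be handled directly as a case where Theorem~\ref{main}'s hypothesis $m \geq k$ fails and the bound $m^* \leq m = k-1 \leq \frac{k-1}{r}(n-1+D+Q)$ follows from $n = r+1$. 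The remaining small cases where Theorem~\ref{main}'s hypotheses fail ($m^* \leq k-2$ or $n \leq k-1$) are dispatched by direct edge counting using $d(x) + D_G(x) \geq r$ for $x \in X^*$.

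For the inductive step, pick an end-block $B$ of $G$ with unique cut vertex $v$, and set $G' = G - (V(B) \setminus \{v\})$. When $B$ is a special block of $G$, the vertices of $X'$ cannot be cut vertices because $N_G(x) = Y'$ for each $x \in X'$, so $v \in Y'$. Peeling off $B$ then preserves everything cleanly: $|X^*_{G'}| = |X^*| - (k-1)$, $n_{G'} = n - (r-1)$, $D(G', X^*_{G'}) = D(G, X^*)$, and $Q(G', X^*_{G'}) = Q(G, X^*) - 1$. Induction applied to $G'$ yields
\[
|X^*| = (k-1) + |X^*_{G'}| \leq (k-1) + \frac{k-1}{r}\bigl(n_{G'} - 1 + D + (Q - 1)\bigr) = \frac{k-1}{r}(n - 1 + D + Q),
\]
with equality propagating from $G'$ to $G$.

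When $B$ is not special, I would apply the base-case bound to $B$ with $X^* \cap V(B)$ and the inductive hypothesis to $G'$ with $X^* \cap V(G')$, then sum the two. The bookkeeping splits into three subcases: $v \in Y$ (clean, since $n_B + n_{G'} = n + 1$ balances exactly), $v \in X \setminus X^*$ (giving an extra unit of slack in the $n$-count, yielding strict inequality), and $v \in X \cap X^*$, which is the principal obstacle because $v$ is then counted in both $X^* \cap V(B)$ and $X^* \cap V(G')$ while the combined deficiency $D_B(v) + D_{G'}(v)$ may exceed $D_G(v)$ by as much as $r$. The main obstacle is resolving this last case: the resolution invokes the strict gap $\frac{k-1}{r} - \frac{k}{2r-k+2} = \frac{(k-2)(r-k+1)}{r(2r-k+2)} > 0$ from Theorem~\ref{main} applied to $B$, combined with the 2-connectivity of $B$ forcing $d_B(v) \geq 2$ and hence $n_B \geq d_B(v) \geq 2$, and the inequality $n_B - 1 + D_B(v) \geq r - 1$ (since $n_B \geq d_B(v)$ and $D_B(v) = \max(0, r - d_B(v))$); these together show the slack from the tighter $B$-bound absorbs the deficiency overcount at $v$. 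Finally, for the equality characterization, I would retrace the induction: equality forces every end-block to be $K_{k-1, r}$ (a special block, contributing 1 to $Q$) or $K_{k-1, r+1}$ (the extremal configuration at the boundary of Theorem~\ref{main}), every cut vertex to lie in $Y$ (otherwise the $v \in X$ subcases incur strict loss), $X^* = X$, and $D(G, X^*) = 0$. Connectedness of $G$ is immediate from $2k$-saturation, since any non-edge between components could be added without creating a cycle.
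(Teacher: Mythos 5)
Your overall strategy (induction via end-block decomposition, with Theorem~\ref{main} as the 2-connected workhorse) matches the paper's, but the key subcase of the inductive step---when the cut vertex $v$ lies in $X\cap X^*$---has a genuine gap. You propose to apply the bound to $B$ with $X^*\cap V(B)$ and to $G'$ with $X^*\cap V(G')$ and then sum, accepting the double-count of $v$ in $m^*$ and the possible overcount $D_B(v)+D_{G'}(v)-D_G(v)$ (up to $r$) in the deficiency, then absorbing these with the slack $\frac{k-1}{r}-\frac{k}{2r-k+2}$ applied to $B$. This does not go through. First, Theorem~\ref{main} may simply not apply to $B$ (e.g.\ if $|X\cap V(B)|<k$ or $|X^*\cap V(B)|<k-1$, as happens when $B$ is a short cycle), in which case your fallback is the $\frac{k-1}{r}$ bound on $B$, which carries no slack at all, and the deficiency/$Q$ overcounts are then fatal. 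Second, even when Theorem~\ref{main} does apply to $B$, a direct computation shows the slack is too small: you need roughly $\bigl(\frac{k-1}{r}-\frac{k}{2r-k+2}\bigr)(n_B-1+D_B)\geq \frac{k-1}{r}(\epsilon-1)-1$ where $\epsilon$ can be close to $r$; using $n_B-1+D_B\geq r-1$ and $\frac{k-1}{r}-\frac{k}{2r-k+2}=\frac{(k-2)(r-k+1)}{r(2r-k+2)}$, this reduces to $\frac{(k-2)(r-k+1)(r-1)}{2r-k+2}\geq r(k-2)-(k-1)$, which fails already at $r=k+1$, $k=4$ (the left side is $2$, the right side is $7$). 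Third, you have overlooked that $Q(G',X^*\cap V(G'))$ can exceed $Q(G,X^*)$: a block of $G'$ containing $v$ that was not special in $G$ (because $v$ had extra neighbors inside $B$) can become special once $B$ is peeled away, adding a further positive term on the wrong side.

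The paper resolves all three issues with one move: when $b\in X$, it sets $X_1^*=X^*-X^*_B-b$, i.e.\ it deletes the cut vertex from the $X^*$-set passed to the smaller graph $G_1$. This simultaneously eliminates the double-count of $b$ in $m^*$, eliminates the deficiency overcount (since $D_{G_1}(b)$ is simply not summed), and guarantees $Q(G_1,X_1^*)=Q(G,X^*)$ (a block of $G_1$ containing $b$ cannot be special since $b\in X'\setminus X_1^*$). What is then lost is the contribution of $X_B^*\cup\{b\}$ to $m^*$; the paper recovers this with the crude bound $|X_B^*\cup\{b\}|\leq k-1$ combined with the observation $n_B+D_G(x)\geq r$ for some $x\in X_B^*$ (its Case 4), or, when $|X_B^*|\geq k-1$, by combining the inductive bound on $G_1$ with Theorem~\ref{main} applied to $B$ alone and deriving a contradiction (its Case 5). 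To repair your argument, you should adopt this deletion of the cut vertex from $X^*$ on the $G'$ side rather than attempting to absorb the overcounts with the constant gap.
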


It is straightforward to check that the graphs described in (i)-(iv) are indeed sharpness examples to Theorem~\ref{t1}: suppose $G$ has $s$ blocks of the form $K_{k-1, r}$ and $t$ of the form $K_{k-1, r+1}$. Then $m = m^* = (s+t)(k-1)$, $n = s(r-1) + tr + 1$, $D(G, X) = 0$, and $Q(G, X) = s$, since each $K_{k-1, r}$ block is special. Therefore 
\[\frac{k-1}{r}(n-1 + D(G, X) + Q(G, X)) = \frac{k-1}{r}(s(r-1) + tr + 1 - 1 + 0 + s) = \frac{k-1}{r}(r(s + t)) = m^*.\]

{\bf Proof of Theorem~\ref{t1}.} Let $(G,X^*)$ be a counterexample to the theorem with the fewest vertices in $G$. 
For short, let $D=D(G,X^*)$ and $Q=Q(G,X^*)$.
 By the definition of $D(x)$,
\begin{equation}\label{dc3}
 d(x)+D(x)\geq r\mbox{ for every ${x\in X}$.}
\end{equation}

{\bf Case 1:} $G$ is $2$-connected. If $m^*\geq k-1$ and $m\geq k$ , then~(\ref{dc1}) follows Theorem~\ref{main}. In fact, we get strict inequality as $\frac{k}{2r-k+2} < \frac{k-1}{r}$ whenever $k-1 < r$.
Suppose $1\leq m^*\leq k-2$ and $x\in X^*$. Then by~(\ref{dc3}), 
$$n-1+D\geq d(x)+D(x)\geq r-1,$$
so $\frac{k-1}{r}(n-1 + D) \geq \frac{k-1}{r}(r-1) > k-2 \geq m$.

The last possibility is that $m^*=m=k-1$. If $n+D\geq r+1$, then~(\ref{dc1}) holds, so suppose 
$n+D=r$. Since $k-1\geq 2$, this together with~(\ref{dc3}), implies that $n=r$ and $D=0$. 
Thus $G=K_{k-1,r}$ and $X^*=X$ which yields that $G$ is a special block.
Thus $Q=1$ and so $n-1+D+Q=r$. This finishes Case 1.

Note that equality is obtained only in this subcase where $G = K_{k-1, r}, X = X^*$, and $D = 0$. Therefore $G$ and $X^*$ satisfy (i)-(iv). 

\medskip
Since Case 1 does not hold, $G$ has a pendant block, say with vertex set $B$. Let $b$ be the cut vertex in $B$,
$X^*_B=X^*\cap B-b$,
$m^*_B=|X^*_B|$, and  $n_B=|B\cap Y|$. Furthermore, let $G_1=G-(B-b)$ and $n_1=|Y\cap V(G_1)|$. 

Note that $G_1$ is 2k-saturated: as a cycle cannot span multiple blocks in a graph, if there exists an edge $xy \notin E(G_1)$ such that $G_1 + xy$ contains no cycle of length $2k$ or longer, then $G + xy$ also contains no cycle of length $2k$ or longer, contradicting that $G$ is $2k$-saturated.

{\bf Case 2:} $b\in Y$. Let  $X^*_1=X^*-X^*_B$. By the minimality of $G$,
\begin{eqnarray}\label{yind}
 |X^*_1|\leq  \frac{k-1}{r}(n_1-1+D(G_1,X_1^*)+Q(G_1,X_1^*)), \text{ and } \label{Y1} \\ 
  m^*_B\leq  \frac{k-1}{r}(n_B-1+D(G[B],X_B^*)+Q(G[B],X_B^*)) \label{Y2},
\end{eqnarray}
using $m^*= |X^*_1|+ m^*_B$, we obtain
\begin{equation}\label{dc5}
m^* \leq  \frac{k-1}{r}\Big(n_1+n_B-2+D(G_1,X_1^*)+D(G[B],X_B^*)
+Q(G_1,X_1^*)+Q(G[B],X_B^*)\Big).
\end{equation}
Since $n_1+n_B-2=n-1$, $D=D(G_1,X_1^*)+D(G[B],X_B^*)$ and 
$Q=Q(G_1,X_1^*)+Q(G[B],X_B^*)$,~(\ref{dc5}) implies~(\ref{dc1}).

Furthermore, if equality holds in \eqref{dc1}, then we  have equalities in both \eqref{Y1} and \eqref{Y2}. Again by the minimality of $G$, frames $B$ with $X^*_B$ and $G_1$ with $X^*_1$  both satisfy (i)-(iv). In particular, we have $X^*_B = X \cap B$ and $X^*_1 = X - B$. Since $X^* = X^*_B \cup X^*_1 = X$, it follows  that $G$ also satisfies (i)-(iv). 

{\bf Case 3:} $b\in X$ and $X^*_B=\emptyset$. By the minimality of $G$,
\begin{equation}\label{dc6}
 |X^*|\leq  \frac{k-1}{r}(n_1-1+D(G_1,X^*)+Q(G_1,X^*)).
\end{equation}
Since $d_G(b)-d_{G_1}(b)\leq n_B$, $D(G_1, X^*) \leq D(G, X^*) + n_B$. If $Q(G_1,X^*)=Q(G,X^*)$, then~(\ref{dc6}) implies~(\ref{dc1}). Furthermore, suppose that equality holds in \eqref{dc1}. Then equality also holds in \eqref{dc6}, and $D(G, X^*) + n_B= D(G_1, X^*)$. By the minimality of $G$, $G_1$ and $X^*$ satisfy (i)-(iv). In particular by  (iv), $D(G_1, X^*) = 0$, contradicting that $D(G, X^*) + n_B= D(G_1, X^*)$.

So, suppose $Q(G_1,X^*)>Q(G,X^*)$. By Part (iii) of the definition of a special block, if this happens, then
$Q(G_1,X^*)=Q(G,X^*)+1$ and the unique block $B_1$ that is special in $(G_1,X^*)$  but not special in 
$(G,X^*)$ contains $b$. This means $d_{G_1}(b)=r$ and hence $D(G_1,X^*)=D$. But $n_B\geq 1$, and so
again~(\ref{dc6}) implies~(\ref{dc1}). Furthermore, if $n_B \geq 2$, then we obtain strict inequality in \eqref{dc1}. If $n_B = 1$, say $Y \cap B = \{y\}$, then since $B$ is 2-connected, $B$ consists of a single edge $yb$ attached to the special block $B_1$, where $B_1$ is a copy of $K_{k-1, r}$ with $|X \cap B_1| = k-1$. Note that if a block in $G$ has a partite set of size $k-1$, then the longest cycle in $G$ has length at most $2(k-1)$. Thus for any $x \in X \cap (B_1 - b)$, $c(G + xy) \leq 2(k-1)$, contradicting that $G$ is $2k$-saturated. 

{\bf Case 4:} $b\in X$ and $1\leq m^*_B\leq k-2$. 
Let  $X^*_1=X^*-X^*_B-b$.
By the minimality of $G$,
\begin{equation}\label{dc7}
 |X^*_1|<  \frac{k-1}{r}(n_1-1+D(G_1,X_1^*)+Q(G_1,X_1^*)).
\end{equation}
Where note that we have strict inequality because $X^*_1$ does not satisfy (iii) for $G_1$. Since $b\notin X^*_1$, $Q(G_1,X_1^*)=Q$. Let $x\in X^*_B$. By~(\ref{dc3}),
$D(x)+n_B\geq r$. Thus by the case,
$$m^* \leq |X^*_B\cup \{b\}|+|X^*_1|<
k-1+\frac{k-1}{r}(n_1-1+D(G_1,X_1^*)+Q)\leq 
$$
$$
k-1+\frac{k-1}{r}(n-n_B-1+(D-D(x))+Q)\leq k-1+\frac{k-1}{r}(n-1+D-r+Q)=\frac{k-1}{r}(n-1+D+Q),
$$
as claimed.

{\bf Case 5:} $b\in X$ and $ m^*_B\geq k-1$. 
Let  $X^*_1=X^*-X^*_B-b$.
By the minimality of $G$, again~(\ref{dc7}) holds. Also, as in Case 4, $Q(G_1,X_1^*)=Q$ and $m^*\leq |X^*_1|+1+m^*_B$.
Since $n=n_1+n_B$, and $ D(G_1,X_1^*)+D(G[B],X^*_B) \leq D$, in order for $(G,X^*)$ to be a counterexample
to the theorem, all this together yields
\begin{equation}\label{dc8}
 m^*_B+1>  \frac{k-1}{r}(n_B+D(G[B],X_B^*)).
\end{equation}

On the other hand, by Theorem~\ref{t1},
$$m^*_B\leq \frac{k}{2r-k+2}(n_B-1+D(G[B],X^*_B)).
$$
Plugging this into~(\ref{dc8}), we get
\begin{equation}\label{dc9}
 \frac{k}{2r-k+2}(n_B-1+D(G[B],X^*_B))+1>\frac{k-1}{r}(n_B+D(G[B],X_B^*)).
 \end{equation}
 Since the coefficient at $n_B+D(G[B],X^*_B)$ in the left side of~(\ref{dc9}) is less than the one in the right side, and since by~(\ref{dc3}),  
 $n_B+D(G[B],X^*_B)\geq r$,~(\ref{dc9}) implies
 \begin{equation}\label{dc10}
 \frac{k}{2r-k+2}(r-1)+1>\frac{k-1}{r}r=k-1.
 \end{equation} 
 But~(\ref{dc10}) is equivalent to $k(r-1)>(k-2)(2r-k+2)$, which is not true when $r\geq k\geq 4$.
 \qed

As a corollary, we obtain the same result for graphs that are not $2k$-saturated.

 \begin{cor}\label{t2}
Let $ k\geq 4$, $r\geq k+1$  and $m,m^*,n$ be positive integers with $m^*\leq m$. %Let $k$ {\bf be even}.
Let $(G,X^*)$ be an $(X,Y)$-{frame},
where $|X|=m$, $|Y|=n$, and
 $|X^*|=m^*$. %For every $x\in X$, let $D(x)=\max\{0, r-d(x)\}$. Let $D=D(G)=\sum_{x\in X^*}D(x)$.
If $c(G)<2k$, then 
\begin{equation}\label{dc2}
m^*\leq \frac{k-1}{r}(n-1+D(G,X^*)+Q(G,X^*)).
\end{equation}
 \end{cor}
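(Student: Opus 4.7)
The plan is to reduce to the $2k$-saturated setting of Theorem~\ref{t1}. First I would construct a $2k$-saturated supergraph $G^*$ of $G$ on the same bipartition by greedily adding any non-edge $xy$ with $x\in X$, $y\in Y$ whose addition does not create a cycle of length at least $2k$; this halts in finitely many steps and yields $G^*$ with $c(G^*)<2k$. Applying Theorem~\ref{t1} to $(G^*,X^*)$ gives
\[
m^*\le \frac{k-1}{r}\bigl(n-1+D(G^*,X^*)+Q(G^*,X^*)\bigr),
\]
so the task reduces to proving
\[
D(G^*,X^*)+Q(G^*,X^*)\ \le\ D(G,X^*)+Q(G,X^*).
\]

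Next I would enumerate the $q=Q(G^*,X^*)$ special blocks $B_1,\dots,B_q$ of $(G^*,X^*)$, with $X$-parts $X'_i\subseteq X^*$ of size $k-1$ and $Y$-parts $Y'_i$ of size $r$. Since each $x\in X'_i$ has $N_{G^*}(x)=Y'_i$, it is not a cut vertex of $G^*$ and lies in only the block $B_i$, so the sets $X'_i$ are pairwise disjoint. For each $i$ I would set
\[
\mu_i:=\bigl|E(G^*[V(B_i)])\setminus E(G)\bigr|=\sum_{x\in X'_i}(r-d_G(x))=\sum_{x\in X'_i}D_G(x),
\]
where the last identity uses $d_G(x)\le d_{G^*}(x)=r$ for $x\in X'_i$.

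The decisive observation is that whenever $\mu_i=0$, the block $B_i$ is already a special block of $(G,X^*)$: then $G[V(B_i)]=K_{k-1,r}$ is $2$-connected, and any strictly larger $2$-connected subgraph of $G$ containing $V(B_i)$ would also be $2$-connected in $G^*\supseteq G$, contradicting the maximality of $B_i$ as a block of $G^*$; conditions (ii)--(iii) of the definition transfer verbatim from $G^*$. Hence $Q(G,X^*)\ge|\{i:\mu_i=0\}|$. On the deficiency side, $D_{G^*}(x)=0$ while $D_G(x)=r-d_G(x)\ge 0$ for $x\in X'_i$, and the remaining vertices of $X^*$ contribute nonnegatively to $D(G,X^*)-D(G^*,X^*)$, so $D(G,X^*)-D(G^*,X^*)\ge\sum_i\mu_i$.

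Putting these bounds together and using $\mu_i\ge 1$ whenever $\mu_i\ne 0$,
\[
D(G,X^*)+Q(G,X^*)\ \ge\ D(G^*,X^*)+\sum_i\mu_i+|\{i:\mu_i=0\}|\ \ge\ D(G^*,X^*)+q,
\]
which is the required inequality. The main obstacle is the structural claim that a special block of $G^*$ with $\mu_i=0$ is already a special block of $G$; this hinges on the elementary fact that $2$-connectivity passes to spanning supergraphs, so maximality in $G^*$ forces maximality in $G$.
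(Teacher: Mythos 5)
Your proposal is correct and follows the same high-level strategy as the paper: saturate $G$ to $G^*$, apply Theorem~\ref{t1} to $G^*$, and show that the quantity $D+Q$ does not increase in passing from $G$ to $G^*$. The difference is one of rigor rather than route. The paper argues informally that if $Q$ jumps by $t$ under saturation, then ``$t$ special blocks were created,'' each contributing a vertex whose deficiency strictly drops, so $D$ decreases by at least $t$; it does not address why the block structure of $G^*$ lines up with that of $G$, nor does it verify carefully that the witness vertices in distinct new special blocks are distinct. Your version fixes both points: you observe that the $X$-parts $X'_i$ of distinct special blocks of $G^*$ are pairwise disjoint (vertices $x$ with $N_{G^*}(x)=Y'_i$ are non-cut, hence lie in a unique block), you compute exactly the drop in deficiency attributable to each $B_i$ as $\mu_i=\sum_{x\in X'_i}D_G(x)$, and you supply the needed structural fact that a special block of $G^*$ with $\mu_i=0$ is already a special block of $G$ (using that $2$-connectivity passes to spanning supergraphs, so maximality of $B_i$ in $G^*$ forces maximality in $G$). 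Your accounting inequality $D(G,X^*)+Q(G,X^*)\ge D(G^*,X^*)+\sum_i\mu_i+|\{i:\mu_i=0\}|\ge D(G^*,X^*)+q$ then gives the result cleanly. One side note: the paper's sentence stating the deficiency comparison has the inequality reversed relative to its surrounding prose (it should read $D(G',X^*)\le D(G,X^*)$, consistent with ``cannot grow'' and with how the bound is used); your write-up avoids this.
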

\begin{proof}
Add edges to $G$ until the resulting graph is $2k$-saturated. Call this graph $G'$. Note that when adding edges, the deficiency $D(G, X^*)$ of any $X^* \subseteq X$ cannot grow. That is, $D(G,X^*) \leq D(G', X^*)$. 

Applying Theorem~\ref{t1} to $G'$, we obtain
\[
|X^*| \leq \frac{k-1}{r}(n-1+D(G',X^*)+Q(G',X^*)). \]

If $Q(G',X^*) \leq Q(G, X^*)$, then we're done. Otherwise suppose that $Q(G', X^*) = Q(G,X^*) + t$. This implies that there were $t$ special blocks created when adding edges within the blocks of $G$. Let $B$ be such a block that was not special in $G$ but became special in $G'$. Then in $G$, $B \subsetneq K_{k-1,r}$. Thus some vertex $v \in B \cap X$ has $d_G(v) < r$ but $d_{G'}(v) = r$. Hence $D_G(v) > D_{G'}(v)$. It follows $D(G, X^*) \geq D(G', X^*) + t$. 

Thus \[|X^*|  \leq \frac{k-1}{r}(n-1 + D(G', X^*) + Q(G, X^*) + t) \leq \frac{k-1}{r}(n-1 + D(G, X^*) + Q(G, X^*),\] as desired. 
\end{proof} 

\section{Proofs for hypergraphs:  Theorem~\ref{EG_full} and Corollary~\ref{gyori2} }

{\bf Proof of Theorem~\ref{EG_full}}. Let $\mathcal H$ be an $n$-vertex multi-hypergraph with lower rank $r$ and edge multiplicty at most $k-2$. Let $G = G(\mathcal H)$ be the incidence graph of $\mathcal H$ with parts $X = V(\mathcal H)$ and $Y = E(\mathcal H)$. By construction, since $\mathcal H$ has lower rank at least $r$, each $x \in X$ has $d_G(x) \geq r$. Therefore $D(G, X) = 0$. Also, $G$ cannot contain a special block (i.e., $Q(G, X) = 0$) as such a block in $G$ would correspond to a set of $k-1$ edges in $\mathcal H$ that are composed of the same $r$ vertices. But we assumed that $\mathcal H$ has no edges with multiplicity greater than $k-2$.

%First suppose $r =k$. By the choice of $\mathcal H$ as an edge-minimal counterexample, we have that $\mathcal H$ (and therefore $G$) is connected. If $G$ is 2-connected, then we apply Theorem~\ref{main} to $G$ with $X^* = X$ to obtain $m^* \leq \max\{n-k+1, \frac{k}{2r-k+2}(n-1)\} \leq n-1$. So we may suppose that $G$ has a pendant block $B$ and cut vertex $b$ such that $V(B) \cap V(G') = \{b\}$ where $G' = G - B + b$. If $b \in Y$, let $\mathcal B$ and $\mathcal H'$ be the subhypergraphs of $\mathcal H$ corresponding to $B$ and $G'$ respectively. By the choice of $\mathcal H$, $\mathcal B$ and $\mathcal H'$ satisfy Theorem~\ref{EG_full}. I.e., we obtain $|\mathcal H| = |\mathcal B| + |\mathcal H'| \leq |V(\mathcal B)| - 1 + |V(\mathcal H')| - 1 =n-1$ where we use the fact that $|V(\mathcal B)| + |V(\mathcal H')| = n-1$. If $b \in X$, then $b$ corresponds to a hyperedge in $\mathcal H$. let $\mathcal C_1$ be a component of $\mathcal H - b$, and let $\mathcal H' = \mathcal H - b -  \mathcal C_1$. Again, by the choice of $\mathcal H$, $\mathcal C_1$ and $\mathcal H'$ satisfy Theorem~\ref{EG_full}. Therefore $|\mathcal H| = |\mathcal C_1| + |\mathcal H'| + 1 \leq |V(\mathcal C_1)| -1 + |V(\mathcal H')| - 1 + 1 = n-1$. 

 Applying Theorem~\ref{t1} to $G$ with $X^* = X$, we obtain \[e(\mathcal H) = |X| \leq \frac{k-1}{r}(n-1 + D(G, X) + Q(G, X)) = \frac{k-1}{r}(n-1).\]

Finally, suppose equality holds. Add edges to $G$ until it is $2k$-saturated. Let $G'$ be the resulting graph. Again we have $Q(G', X) = Q(G, X) = 0$ and $D(G', X) = D(G, X) = 0$, therefore $|X| = \frac{k-1}{r}(n-1 + D(G', X) + Q(G', X))$. Hence $G'$ satisfies (i)-(iv) in the second part of the statement of Theorem~\ref{t1}. In particular, all blocks of $G'$ are copies of $K_{k-1,r+1}$ with cut vertices in $Y$. 
%Then in $G$, we must have that within each block, every vertex $x \in X$ has a unique set of $r$ neighbors within the $r+1$-partite set. That is, each $K_{k-1,r+1}$ block in $G'$ corresponds to an $(r,k)$-block in the hypergraph $\mathcal H$. This completes the proof of Theorem~\ref{EG}.
Then in $G$  within each block, every vertex $x \in X$ is adjacent to a subset of the $r+1$-partite set of size $r$ or $r+1$. That is, each $K_{k-1, r+1}$ block in $G'$ corresponds to an $(r+1,k-1)$-block in $\mathcal H$. This completes the proof of Theorem~\ref{EG_full}. \qed

{\bf Proof of Corollary~\ref{gyori2}}. 
 Recall that a Berge-path of length $k$ has $k+1$ base vertices and $k$ hyperedges.
Suppose $\mathcal H$ satisfies the conditions of the corollary.
We construct the multi-hypergraph $\mathcal H'$ by adding a new vertex $x$ to $\mathcal H$ and extending each hyperedge of $\mathcal H$ to include $x$. Then $\mathcal H'$ has $n+1$ vertices, lower rank at least $r+1$,  no edge with multiplicity at least $k-1$, and $e(\mathcal H') = e(\mathcal H)$. 

We claim that $\mathcal H'$ has no Berge-cycle of length $k$ or longer. Suppose there exists such a cycle with edges $e_1, \ldots, e_\ell$ and base vertices $v_1, \ldots v_\ell$ and $\ell \geq k$. If $x \in \{v_1, \ldots, v_k\}$, say $x = v_1$, then since each edge in $\mathcal H'$ contains at least $r+1$ vertices, there exist distinct vertices $v_1' \in e_1 - \{v_1, \ldots, v_{k}\}$ and  $v_{k+1}' \in e_k - \{v_1, \ldots, v_{k}\}$. For each $1 \leq i \leq \ell$, let $e_i' = e_i - \{x\}$. Then $e_1', \ldots, e_k'$ and $\{v_1', v_2, \ldots, v_{k}, v_{k+1}'\}$ form a Berge-path of length $k$. The case where $x \notin \{v_1, \ldots, v_k\}$ is similar (and simpler). Therefore, applying Theorem~\ref{EG} to $\mathcal H'$, we obtain \[e(\mathcal H) = e(\mathcal H') \leq \frac{k-1}{r + 1}((n+1) - 1),\] as desired. \qed

\vspace{4mm}
{\bf Acknowledgment.} We  thank Zolt\`an F\"uredi and Jacques Verstra\"ete for very helpful discussions.
We also thank Ervin Gy\H ori for sharing a proof of Theorem 1.7.

\end{document}